%
%
\documentclass{article}

\usepackage[a4paper,top=2.54cm,bottom=2.54cm,left=3.17cm,right=3.17cm,%
            includehead,includefoot]{geometry}

\usepackage{amsmath,amssymb,amsfonts,amsthm}
\usepackage{graphicx}
\usepackage{subfigure}
\usepackage{float}
\usepackage{xcolor}
\usepackage[numbers,square,sort&compress]{natbib}
\usepackage{hyperref}
  \hypersetup{colorlinks,citecolor=blue,linkcolor=blue,breaklinks=true}
\usepackage{booktabs}
\usepackage{colortbl}
\usepackage{caption}
\usepackage{enumitem}
\usepackage{epstopdf}
\usepackage{algorithm}
\usepackage{algpseudocode}
\usepackage{array}
\usepackage{bbding}
\usepackage{fancyhdr} 
\usepackage{fancyvrb} 
\usepackage{longtable}
\usepackage{listings}
\usepackage{rotating,rotfloat} 
\usepackage{yhmath} 
\usepackage{amsthm,amsmath,amssymb}
\usepackage{mathrsfs}
\usepackage{amssymb}
\usepackage{indentfirst}
\usepackage{fancyhdr}
\pagestyle{fancy}
\fancyhf{}

\chead{\leftmark}
\cfoot{\thepage}


\allowdisplaybreaks
\usepackage[all]{xy}
\makeatletter

\newcommand{\Rmnum}[1]{\expandafter\@slowromancap\romannumeral #1@}
\makeatother

\newtheorem{theorem}{Theorem}[section]
\newtheorem{lemma}{Lemma}[section]
\newtheorem{corollary}[theorem]{Corollary}
\newtheorem{proposition}[theorem]{Proposition}
\newtheorem{definition}{Definition}[section]

\newtheorem{pr}[theorem]{Problem}


\newcommand{\bbm}{\begin{bmatrix}}
\newcommand{\ebm}{\end{bmatrix}}

\begin{document}

\title{\textbf{Uniform (d+1)-bundle over the Grassmannian G(d,n)}}

\author{Rong Du \thanks{School of Mathematical Sciences
Shanghai Key Laboratory of PMMP,
East China Normal University,
Rm. 312, Math. Bldg, No. 500, Dongchuan Road,
Shanghai, 200241, P. R. China,
rdu@math.ecnu.edu.cn.
}
and Yuhang Zhou
\thanks{School of Mathematical Sciences
Shanghai Key Laboratory of PMMP,
East China Normal University,
No. 500, Dongchuan Road,
Shanghai, 200241, P. R. China,
727737360@qq.com.
Both authors are sponsored by Innovation Action Plan (Basic research projects) of Science and Technology Commission of Shanghai Municipality (Grant No. 21JC1401900), Natural Science Foundation of Chongqing Municipality, China (general program, Grant No. CSTB2023NSCQ-MSX0334) and Science and Technology Commission of Shanghai Municipality (Grant No. 22DZ2229014).
}
}

\date{}

\maketitle

\begin{center}
	{(Dedicate to the innocent civilians who sacrificed in the war)}
\end{center}

\begin{abstract}
This paper is dedicated to the classification of uniform vector bundles of rank $d+1$ over the Grassmannian $G(d,n)$ ($d\le n-d$) over an algebraically closed field in characteristic $0$. Specifically, we show that all uniform vector bundles with rank $d+1$ over $G(d,n)$ are homogeneous.
\end{abstract}

\section{Introduction}
Algebraic vector bundles over a projective variety $X$ over an algebraically closed field $k$ are basic research objects in algebraic geometry. If $X$ is $\mathbb{P}^1$, then any vector bundle over $X$ splits as a direct sum of line bundles by Grothendieck's well known result. However, if $X$ is a projective space of dimension greater than one, then the structures of vector bundles over $X$ are not so easy to be determined. Since any projective space is covered by lines, it is a natural way to consider the restriction of vector bundles to lines in it. If the splitting type of a vector bundle $E$ keeps same when it restricts to any line in the projective space $\mathbb{P}^n$, then $E$ is called a uniform vector bundle on $\mathbb{P}^n$.
The notion of a uniform vector bundle appears first in Schwarzenberger' paper (\cite{Sch}). Over the field in characteristic zero, much work has been done on the classification of uniform vector bundles over projective spaces. In 1972, Van de Ven (\cite{Ven}) proved that for $n>2$, uniform 2-bundles over $\mathbb{P}^n$ split and uniform 2-bundles over $\mathbb{P}^2$ are precisely the bundles $\mathcal{O}_{\mathbb{P}^2}(a)\bigoplus\mathcal{O}_{\mathbb{P}^2}(b)$,
$T_{\mathbb{P}^2}(a)$ and $\Omega^1_{\mathbb{P}^2}(b)$, where $a,b\in\mathbb{Z}$. In 1976, Sato (\cite{Sat}) proved that for $2<r<n$, uniform $r$-bundles over $\mathbb{P}^n$ split. In 1978, Elencwajg (\cite{Ele}) extended the investigations of Van de Ven to show that uniform vector bundles of rank 3 over $\mathbb{P}^2$, up to dual, are of the forms
$$\mathcal{O}_{\mathbb{P}^2}(a)\bigoplus\mathcal{O}_{\mathbb{P}^2}(b)\bigoplus\mathcal{O}_{\mathbb{P}^2}(c), ~ T_{\mathbb{P}^2}(a)\bigoplus\mathcal{O}_{\mathbb{P}^2}(b)\quad\text{and}\quad S^2T_{\mathbb{P}^2}(a),$$ where $a$, $b$, $c\in \mathbb{Z}$. Previously, Sato (\cite{Sat}) had shown that for $n$ odd, uniform $n$-bundles over $\mathbb{P}^n$ are of the forms
 $$\oplus_{i=1}^{n}\mathcal{O}_{\mathbb{P}^n}(a_i),~ T_{\mathbb{P}^n}(a)\quad\text{and}\quad \Omega^1_{\mathbb{P}^n}(b),$$  where $a_i,a, b\in \mathbb{Z}$. So the results of Elencwajg and Sato yield a complete classification of uniform 3-bundles over $\mathbb{P}^n$. In particular, all uniform 3-bundles over $\mathbb{P}^n$ are homogeneous. Later, Elencwajg, Hirschowitz and Schneider (\cite{E-H-S}) showed that Sato's result is also true for $n$ even. Around 1982, Ellia (\cite{Ell}) proved that for $n+1=4,5,6$, uniform $(n+1)$-bundles over $\mathbb{P}^{n}$ are of the form
  $$\oplus_{i=1}^{n+1}\mathcal{O}_{\mathbb{P}^n}(a_i),~ T_{\mathbb{P}^n}(a)\bigoplus\mathcal{O}_{\mathbb{P}^n}(b)\quad\text{and}\quad\Omega^1_{\mathbb{P}^n}(c)\bigoplus\mathcal{O}_{\mathbb{P}^n}(d), $$ where $a_i, a,b,c,d\in\mathbb{Z}$. Later, Ballico (\cite{Bal}) showed that Ellia's result is still true for any $n$. So, uniform $(n+1)$-bundles over $\mathbb{P}^{n}$ are totally classified. In particular, they are all homogeneous. One can see from Ellia and Ballico's papers that classification problem becomes harder when the rank of a vector bundle is great that then $n$.

Uniform bundles over an algebraically closed field in characteristic $0$ are widely studied not only on projective spaces, but also on special Fano manifolds of Picard number one (\cite{Bal2} \cite{K-S} \cite{Guy} \cite{D-F-G} \cite{D-F-G2} \cite{M-O-C2} \cite{Pan}).  In \cite{M-O-C3}, Mu˜\~{n}oz, Occhetta and Sol\'{a} Conde proposed a problem as follows.

\begin{pr}
Classify low rank uniform principal $G$-bundles ($G$ semisimple algebraic group) on rational homogeneous spaces.
\end{pr}

Grassmannians are simplest rational homogeneous spaces except projective spaces. In 1985, Guyot (\cite{Guy}) proved that when $r<d$, uniform $r$-bundles over $G(d,n)$ ($d\leq n-d$) split and when $r=d$, are one of the form:

\begin{center}
	$H_d{(a)}, H_d^*{(b)} ~\text{and}~ \overset{k}{\underset{i=1}{\oplus}} \mathcal{O}_{G(d,n)}{(c_i)},$
\end{center}
where $a,b,c_i \in \mathbb{Z}$. For $r>d$, the classification problem still remains open.


In this paper, we classify uniform $(d+1)$-bundles over $G(d,n)$ ($d\le n-d$) over an algebraic closed field in characteristic $0$ and deduce the following main theorem.
\begin{theorem}\label{char 0}
	Let $E$ be a uniform $(d+1)$-bundle over the Grassmannian $G(d,n)$ over an algebraically closed field in characteristic zero, where $2 \le d\le n-d$. Then $E$ is isomorphic to one of the following:
	\begin{center}
	$H_d{(a)}\bigoplus \mathcal{O}_{G(d,n)}{(b)}, H_d^*{(c)}\bigoplus \mathcal{O}_{G(d,n)}{(d)},\overset{k}{\underset{i=1}{\oplus}} O^{\oplus r_{i}}_{G(d,n)}{(e_i)}, Q_{n-d}(s), Q_{n-d}^*(t)~\text{and}~S^2 H_2(f)$
	\end{center}	
where $a,b,c, d, e_i, s, t, f \in \mathbb{Z}$ and $H_d$ is the universal subbundle over  $G(d,n)$.
\end{theorem}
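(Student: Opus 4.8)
The plan is to adapt the strategy used for uniform bundles on projective spaces: exploit the two families of maximal linear subspaces of $G(d,n)$, together with the incidence variety of lines, and bootstrap from the known classification of uniform bundles on $\mathbb{P}^{N}$.

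First I recall the relevant geometry. A line in $G(d,n)$ is a pencil $\ell_{A,B}=\{W\mid A\subseteq W\subseteq B\}$ for a flag $A\subset B$ with $\dim A=d-1$ and $\dim B=d+1$, so the variety of lines is $\mathrm{Fl}(d-1,d+1;n)$, and the incidence variety $\mathbb{F}:=\mathrm{Fl}(d-1,d,d+1;n)$ carries a projection $p\colon\mathbb{F}\to G(d,n)$, with fibre $\mathbb{P}(W^{\ast})\times\mathbb{P}(k^{n}/W)\cong\mathbb{P}^{d-1}\times\mathbb{P}^{n-d-1}$ over $[W]$, and a projection $q$ onto the variety of lines, with fibres the lines $\ell\cong\mathbb{P}^{1}$. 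Through a fixed $(d+1)$-plane $B$ one has a linear $\sigma_{B}\cong G(d,B)\cong\mathbb{P}^{d}$, on which $H_{d}$ restricts to $\Omega^{1}_{\mathbb{P}^{d}}(1)$ and $Q_{n-d}$ to $\mathcal{O}^{\oplus(n-d-1)}\oplus\mathcal{O}(1)$; through a fixed $(d-1)$-plane $A$ one has a linear $\tau_{A}\cong G(d,k^{n}/A)\cong\mathbb{P}^{n-d}$, on which $H_{d}$ restricts to $\mathcal{O}^{\oplus(d-1)}\oplus\mathcal{O}(-1)$ and $Q_{n-d}$ to $T_{\mathbb{P}^{n-d}}(-1)$. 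Restricting $E$ to all the $\sigma_{B}$ and $\tau_{A}$ and invoking the classification of uniform bundles on $\mathbb{P}^{N}$ — Ballico's theorem in rank $N+1$, Sato's in rank $<N$, and the results of Elencwajg and of Elencwajg--Hirschowitz--Schneider in the surface and threefold cases — forces $E|_{\sigma_{B}}$ to be a direct sum of line bundles, $T_{\mathbb{P}^{d}}(a)\oplus\mathcal{O}(b)$, $\Omega^{1}_{\mathbb{P}^{d}}(c)\oplus\mathcal{O}(e)$, or (only when $d=2$) $S^{2}T_{\mathbb{P}^{2}}(a)$ or $S^{2}\Omega^{1}_{\mathbb{P}^{2}}(a)$, and forces $E|_{\tau_{A}}$ to split unless $n\in\{2d,2d+1\}$; note that $Q_{n-d}$ has rank $d+1$ precisely when $n=2d+1$.

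Next, since $E$ is uniform there is no jumping, so the relative Harder--Narasimhan filtration of $p^{\ast}E$ along the fibres of $q$ is a filtration by subbundles $0\subset F_{1}\subset\cdots\subset F_{s}=p^{\ast}E$ with $F_{i}|_{\ell}=\bigoplus_{j\le i}\mathcal{O}_{\ell}(a_{j})^{\oplus r_{j}}$ on every line $\ell$, where $a_{1}>\cdots>a_{s}$ are the slopes with multiplicities $r_{j}$. Restricting $F_{i}$ to a fibre $p^{-1}([W])\cong\mathbb{P}^{d-1}\times\mathbb{P}^{n-d-1}$, on which $p^{\ast}E$ is trivial of rank $d+1$, presents $F_{i}|_{p^{-1}([W])}$ as a subbundle of a trivial bundle, i.e. a morphism $\phi_{i,[W]}\colon\mathbb{P}^{d-1}\times\mathbb{P}^{n-d-1}\to G(\rho_{i},d+1)$, $\rho_{i}=\operatorname{rk}F_{i}$. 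I would then read off $c_{1}(F_{i})\in\operatorname{Pic}(\mathbb{F})\cong\mathbb{Z}^{3}$ by evaluating it on the three $\mathbb{P}^{1}$-rulings of $\mathbb{F}$: the coefficient in the $q$-fibre direction is prescribed by the data $(a_{j},r_{j})$, while the other two are determined by the splitting behaviour on lines of the summands $\Omega^{1}_{\mathbb{P}^{d}}(c)$, $T_{\mathbb{P}^{d}}(a)$, $S^{2}(\cdots)$, and so forth, already pinned down on $\sigma_{B}$ and $\tau_{A}$. The resulting degree bounds confine $\phi_{i,[W]}$ to a short list of standard shapes: either it is constant — in which case $F_{i}=p^{\ast}E_{i}$ for a genuine subbundle $E_{i}\subseteq E$, since by cohomology and base change $p_{\ast}F_{i}$ is locally free with $p^{\ast}p_{\ast}F_{i}\xrightarrow{\sim}F_{i}$ — or, up to a linear automorphism of $k^{d+1}$, it is a projection onto one factor followed by the tautological sub- or quotient embedding of that $\mathbb{P}^{\bullet}$ into a Grassmannian, or (only for $d=2$) a projection onto the $\mathbb{P}^{1}$-factor followed by the Veronese $\mathbb{P}^{1}\hookrightarrow\mathbb{P}^{2}$; the point here is that a morphism of such small degree from a product of two projective spaces to a Grassmannian must be of one of these forms.

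Finally I carry out the case analysis. If every $\phi_{i,[W]}$ is constant, $E$ carries a filtration by subbundles whose single-slope uniform graded pieces have rank $\le d+1$ and hence (after a twist, having trivial splitting type on every line) are trivial; the extensions split since $H^{1}(G(d,n),\mathcal{O}(m))=0$ for all $m$, so $E$ is a direct sum of line bundles. If some $\phi_{i,[W]}$ is non-constant, recognizing it as a tautological-type morphism identifies the associated sub- or quotient sheaf of $E$, up to twist, with $H_{d}$ or $H_{d}^{\ast}$ — or, when $n=2d+1$, with $Q_{n-d}$ or $Q_{n-d}^{\ast}$ (which then have rank $d+1$) — while the remaining slope is a line-bundle summand; the Veronese alternative, occurring only for $d=2$, globalizes, using $H_{2}^{\ast}\cong H_{2}(1)$, to $S^{2}H_{2}(f)$. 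A last step analyzes the extension class of this filtration — via Bott's theorem for the cohomology groups involved, and by comparing splitting types on lines where a global $\operatorname{Ext}^{1}$ fails to vanish — to conclude that the line bundle really does split off, leaving $E$ isomorphic to exactly one of the bundles in the statement. The principal obstacle is the rigidity of the previous paragraph: showing that each $\phi_{i,[W]}$ is so constrained and that these morphisms vary algebraically with $[W]$ so as to patch into a global sub-object of $E$, i.e. controlling how the relative Harder--Narasimhan filtration on $\mathbb{F}$ meets the fibres of $p$, together with the substantial bookkeeping over the admissible splitting types and the two rulings and the cohomology needed to rule out spurious extensions. One could alternatively induct on $n$ through the sub-Grassmannian $G(d,n-1)\subset G(d,n)$, reducing to small base cases.
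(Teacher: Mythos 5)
Your skeleton --- the incidence variety $F(d-1,d,d+1,n)$, the relative Harder--Narasimhan filtration of $p^{*}E$, and the reduction to understanding how that filtration meets the fibres of $p$ --- is the same as the paper's, but your entry point is different: you restrict $E$ to the linear subspaces $\sigma_{B}\cong\mathbb{P}^{d}$ and $\tau_{A}\cong\mathbb{P}^{n-d}$ and quote the Sato, Elencwajg--Hirschowitz--Schneider and Ellia--Ballico classifications there, whereas the paper never restricts to linear subspaces and instead encodes all of the rigidity as a factorization identity for $c_{p^{*}E}(T)$ in the Chow ring of the flag variety modulo the ideal $I$ of Theorem~\ref{Chow}. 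The genuine gap is that the step you yourself call ``the principal obstacle'' --- that the morphisms $\phi_{i,[W]}\colon\mathbb{P}^{d-1}\times\mathbb{P}^{n-d-1}\to G(\rho_{i},d+1)$ are confined to ``a short list of standard shapes'' and patch algebraically into global subsheaves --- is asserted, not proved, and it is the entire mathematical content of the theorem. There is no off-the-shelf classification of low-multidegree morphisms from a product of projective spaces to a Grassmannian that you can invoke; the paper manufactures exactly this rigidity through Lemma~\ref{ES} and Propositions~\ref{polynomial when d<n-d-1} and~\ref{juti ploynimal when d<n-d} when $d<n-d-1$ (where $I$ contributes nothing in degree $d+1$), and through the much harder polynomial analysis of Section~\ref{section n-d} (the vanishing $e+f=0$ via Ballico's and Ellia's lemmas, then Propositions~\ref{form of Si}--\ref{C}) when $n=2d$ or $n=2d+1$ --- which is precisely where your degree bounds would have to be checked against the correction terms $a\sum_{n-d}$ and $f\sum_{n-d+1}$, and where the exceptional bundles $Q_{n-d}$, $Q_{n-d}^{*}$ and $S^{2}Q_{2}$ live. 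Knowing $E|_{\sigma_{B}}$ and $E|_{\tau_{A}}$ up to isomorphism for every $A$ and $B$ does not by itself deliver this.

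Second, your endgame is too optimistic about splitting. In the genuinely non-split case, $d=2$ with $c_{p^{*}E}(T)=\prod_{i=1}^{2}(T+2X_{i})(T+X_{1}+X_{2})$, the relevant $\mathrm{Ext}^{1}$ is one-dimensional and the extension defining $HN^{2}$ is provably non-trivial; the paper's Lemma~\ref{S^2 case} needs the Descente-Lemma, an analysis of $pr_{1*}HN^{2}$ on every $q$-fibre, and the vanishing $p_{*}HN^{2}=R^{1}p_{*}HN^{2}=0$ to extract $E\cong S^{2}H_{2}(2)$ from a four-term exact sequence (and the parallel argument for $S^{2}Q_{2}$ on $G(2,4)$). ``The Veronese alternative globalizes'' and ``the line bundle really does split off'' conceal exactly this work. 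So the proposal is a coherent plan with a legitimately different opening move, but its two central steps are stated as goals rather than carried out, and neither is routine.
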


\begin{corollary}
All uniform vector bundles with rank $d+1$ over $G(d,n)$ over an algebraically closed field in characteristic $0$ are homogeneous.
\end{corollary}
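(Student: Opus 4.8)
The plan is to deduce the corollary directly from Theorem~\ref{char 0}, whose classification already does all the real work; what remains is only to observe that every bundle appearing on the list there is homogeneous. Recall that a vector bundle $E$ on a rational homogeneous space $X = G/P$ is \emph{homogeneous} if it is invariant under the natural action of the connected automorphism group, equivalently $g^{*}E \cong E$ for every $g$ in that group; for $X = G(d,n)$ this group is $\mathrm{PGL}_n$ (induced from $\mathrm{GL}_n$), and the homogeneous bundles are exactly those associated to representations of the parabolic subgroup $P$. The universal subbundle $H_d$ and the universal quotient bundle $Q_{n-d}$ on $G(d,n)$ are homogeneous, being attached to the tautological $P$-representations, and every line bundle $\mathcal{O}_{G(d,n)}(e)$ is homogeneous since $\mathrm{Pic}(G(d,n)) \cong \mathbb{Z}$ is generated by a $\mathrm{GL}_n$-linearized ample generator.

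First I would record the elementary closure properties of the class of homogeneous bundles on $X = G/P$: if $E$ and $F$ are homogeneous, then so are $E^{*}$, $E \otimes F$, $E \oplus F$, and $S^{k}E$ and $\bigwedge^{k}E$ for every $k \ge 0$. Each of these follows at once because the operation commutes with pullback along every automorphism $g$ of $X$, equivalently because the category of finite-dimensional $P$-modules is stable under duals, tensor products, direct sums, and symmetric and exterior powers.

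Then I would apply these observations term by term. Starting from the homogeneous bundles $H_d$, $Q_{n-d}$ and $\mathcal{O}_{G(d,n)}(m)$, the closure properties show that $H_d(a) = H_d \otimes \mathcal{O}_{G(d,n)}(a)$, $H_d^{*}(c)$, $Q_{n-d}(s)$, $Q_{n-d}^{*}(t)$, $S^{2}H_2(f)$ and $\bigoplus_{i=1}^{k}\mathcal{O}_{G(d,n)}(e_i)^{\oplus r_i}$ are all homogeneous, and adjoining a further summand $\mathcal{O}_{G(d,n)}(b)$ preserves homogeneity. Hence each of the six types listed in Theorem~\ref{char 0} is a homogeneous bundle. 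Since a uniform $(d+1)$-bundle $E$ over $G(d,n)$ is, by Theorem~\ref{char 0}, isomorphic to one of these, and homogeneity is invariant under isomorphism, $E$ is homogeneous.

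The only subtlety worth flagging is the choice of definition of homogeneity --- invariance under $\mathrm{Aut}^{0}(X)$ versus the existence of a $G$-equivariant structure --- but for $X = G(d,n)$ these notions agree, so the argument is unaffected. Beyond this bookkeeping there is no real obstacle: the entire substance of the corollary is contained in Theorem~\ref{char 0}, so the proof is a short verification rather than an argument of its own.
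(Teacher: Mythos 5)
Your argument is correct and coincides with what the paper intends: the corollary is stated as an immediate consequence of Theorem~\ref{char 0}, since every bundle on the classification list is built from the homogeneous bundles $H_d$, $Q_{n-d}$ and line bundles by duals, twists, direct sums and symmetric powers, all of which preserve homogeneity. Your write-up simply makes explicit the verification the paper leaves implicit.
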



\section{Preliminaries}
\subsection{Grassmannian and Flag variety}
Let $V=k^n$, where $k$ is an algebraically closed field. Denote $G(d,n)$ ($d\le n-d$) to be the Grassmannian manifold of $d$-dimensional linear subspace of $V$. Let $\mathcal{V}:=G(d,n)\times V$ be the trivial vector bundle of rank $n$ on $G(d,n)$ whose fiber
at every point is the vector space $V$. We write $H_d$ for the rank $d$ subbundle of $\mathcal{V}$ whose
fiber at a point $[\Lambda]\in G(d,n)$ is the subspace $\Lambda$ itself; that is,
\[(H_d)_{[\Lambda]}=\Lambda\subseteq V=\mathcal{V}_{[\Lambda]}.\]
$H_d$ is called the \emph{universal subbundle} on $G(d,n)$; the rank $n-d$ quotient bundle $Q_{n-d}=\mathcal{V}/H_d$ is called the \emph{universal quotient bundle}, i.e., they fit the exact sequence
\begin{equation}\label{tau}
0\rightarrow H_d\rightarrow \mathcal{V}\rightarrow Q_{n-d}\rightarrow 0.
\end{equation}

%

The flag variety $F(d_1,d_2,\cdots,d_s,n)$ is a manifold parameterizing increase chain of k-linear subspace in $V$:

\begin{center}
	$F(d_1,d_2,\cdots,d_s,n)=\{(A_1,\cdots A_{s})\in G(d_{1},n)\times \cdots G(d_{s},n)|A_1\subset A_2 \subset \cdots \subset A_s, \text{dim}A_i=d_i, i=1, \dots, s\} $.
\end{center}

When $s=n,d_{i}=i$, we call flag variety $F(1,2,\cdots, n-1,n)$ the complete flag variety. There are universal subbundles $H_{d_i}(1\leq i\leq s)$ of rank $d_i$ over $F(d_1,d_2,\cdots,d_s,n)$ whose fibers at the point $(A_1,\cdots A_{s})$ is $A_{d_{i}}$. In fact, there are natural projections from $F(d_1,d_2,\cdots,d_s,n)$ to $G(d_{i},n)$. The pull back of the universal subbundle $H_{d_i}$ by the projection map is the universal subbundle over $F(d_1,d_2,\cdots,d_s,n)$, which is still denoted by $H_{d_i}$ if there is no confusion in the context. In particular, we have universal subbundles $H_{i}, 1\leq i\leq n$ over $F(1,2,\cdots, n-1,n)$ and denote $X_{i}:=c_{1}(H_{i}/H_{i-1})$.

There are also universal quotient bundles $Q_{n-d_i}(1\leq i\leq s)$ of rank $n-d_i$ over $F(d_1,d_2,\cdots,d_s,n)$ whose fibers at the point $(A_1,\cdots A_{s})$ is $V/A_{d_{i}}$.

The Chow ring of a flag variety can be found in \cite{Guy} Theorem 3.2.

\begin{theorem}[\cite{Guy} Theorem 3.2]\label{Chow}
	The Chow ring $A(F)$of flag variety $F=F(d_1,d_2,\cdots,d_k,n) $ is
	
\begin{center}
		$Z[X_1,\cdots,X_{d_1};X_{d_{1}+1},\cdots X_{d_2}; \cdots; X_{d_{k}+1},\cdots,X_{n}]/I$,
\end{center}
	where $Z[X_1,\cdots,X_{d_1};X_{d_{1}+1},\cdots X_{d_2}; \cdots; X_{d_{k}+1},\cdots,X_{n}]$ is a ring of polynomials that are symmetric about $X_{d_i+1},\cdots,X_{d_{i+1}}$, where $0\leq i\leq k-1$, if we assume $d_0=0$, and $I$ is an ideal generated by $$\underset{a_1+a_2+\cdots+a_{d_k}=j}{\sum}X_1^{a_1}X_2^{a_2}\cdots X_{d_k}^{a_{d_k}},$$ where $a_{t}\ge 0$ and $(n-d_k) < j \leq n$.
\end{theorem}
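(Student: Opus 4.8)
The plan is to compute $A(F)$ by viewing $F=F(d_1,\dots,d_k,n)$ as an iterated Grassmann bundle and applying the standard Grassmann (and, in the line-bundle case, projective) bundle theorem for Chow rings at each stage, reading off the generators through the splitting principle and the relations through the Whitney formula applied to the tautological sequence \eqref{tau}. Concretely, I would use the projection $\pi\colon F\to G(d_k,n)$ that remembers only the top subspace $A_{d_k}$; its fibre over $[\Lambda]$ is the flag variety of $(d_1,\dots,d_{k-1})$-flags inside the $d_k$-dimensional space $\Lambda$, so $\pi$ is the relative flag bundle $\mathrm{Fl}(d_1,\dots,d_{k-1};H_{d_k})$ associated with the tautological subbundle $H_{d_k}$ on $G(d_k,n)$. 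This splits the computation into a base part (a single Grassmannian) and a relative part (a tower of Grassmann bundles).

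For the base, $G(d_k,n)=G(d_k,\mathcal V)$ is the Grassmann bundle of the trivial bundle $\mathcal V$ over a point, so the Grassmann bundle theorem presents $A(G(d_k,n))$ as the polynomial ring on the Chern classes of $H_{d_k}$ modulo the Whitney relations coming from \eqref{tau}. Since $\mathcal V$ is trivial, $c(H_{d_k})\,c(Q_{n-d_k})=1$, whence $c(Q_{n-d_k})=c(H_{d_k})^{-1}=\sum_{j\ge 0}(-1)^j h_j(X_1,\dots,X_{d_k})$, where $X_1,\dots,X_{d_k}$ are the Chern roots of $H_{d_k}$. As $Q_{n-d_k}$ has rank $n-d_k$, its Chern classes vanish in degrees exceeding $n-d_k$; this yields exactly the relations $h_j(X_1,\dots,X_{d_k})=0$ for $n-d_k<j\le n$, i.e. the generators of $I$, while the lower-degree coefficients merely record the Chern roots $X_{d_k+1},\dots,X_n$ of $Q_{n-d_k}$ as expressions determined by $X_1,\dots,X_{d_k}$. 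Thus $A(G(d_k,n))$ is the ring of symmetric functions in $X_1,\dots,X_{d_k}$ modulo these $h_j$.

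For the relative part, I would factor $\pi$ as a tower of Grassmann bundles $F=G(d_1,H_{d_2})\to\cdots\to G(d_{k-1},H_{d_k})\to G(d_k,n)$ and apply the bundle theorem at each stage. Each stage adjoins the Chern classes of the next tautological subquotient and, by the splitting principle, refines the roots of $H_{d_k}$ into the blocks $[d_{i-1}+1,d_i]$ (with $d_0=0$); a permutation of roots inside a single block fixes every $H_{d_j}$ and hence acts trivially on $A(F)$, so the classes that actually appear are the block-symmetric polynomials in $X_1,\dots,X_{d_k}$. Crucially, the theorem gives at each stage a free module over the previous Chow ring with an explicit monomial basis, so $A(F)$ is free over $A(G(d_k,n))$ and acquires no relation beyond those pulled back from the base. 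Assembling the two parts identifies $A(F)$ with the ring of block-symmetric polynomials in $X_1,\dots,X_{d_k}$ (equivalently in $X_1,\dots,X_n$, the remaining variables being the determined Chern roots of $Q_{n-d_k}$) modulo $I$, as claimed.

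I expect the main obstacle to be confirming that $I$ captures \emph{all} the relations, i.e. that the evident surjection from the presented ring onto $A(F)$ is injective. The clean route is to let the Grassmann bundle theorem supply freeness directly: at every stage the new generators satisfy a single monic relation of the expected degree, so the standard monomials form a basis and the presented ring is a free $\mathbb Z$-module of the correct graded rank. As a check I would verify that this rank equals the number of Schubert cells, the multinomial coefficient $\binom{n}{d_1,\,d_2-d_1,\,\dots,\,d_k-d_{k-1},\,n-d_k}$, matching the cellular (Bruhat) decomposition of $F$; a degree-preserving surjection of free $\mathbb Z$-modules with equal finite graded ranks is then an isomorphism. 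The real bookkeeping, and the place where care is needed, is the symmetric-function manipulation that turns the Whitney relations $c_j(Q_{n-d_k})=0$ into the complete-homogeneous generators $h_j(X_1,\dots,X_{d_k})$ and the check that the block-symmetric standard monomials descend correctly through the tower; the remaining steps are formal consequences of the bundle theorem and \eqref{tau}.
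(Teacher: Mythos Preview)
The paper does not supply its own proof of this statement: Theorem~\ref{Chow} is quoted verbatim from \cite{Guy} (Theorem~3.2) and used as a black box, so there is no in-paper argument to compare against. Your proposal is the standard derivation---realize $F$ as a tower of Grassmann bundles over $G(d_k,n)$, apply the Grassmann/projective bundle formula at each stage to obtain block-symmetric Chern roots as generators, and read the relations off the Whitney identity $c(H_{d_k})c(Q_{n-d_k})=1$ for the tautological sequence~\eqref{tau}, which forces $h_j(X_1,\dots,X_{d_k})=0$ for $n-d_k<j\le n$---and it is correct in outline; this is essentially how Guyot proceeds as well, so your write-up would serve as a faithful reconstruction of the cited proof rather than an alternative to anything in the present paper.
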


\subsection{Standard diagram}

We have the standard commutative diagram as follows:
\begin{align}
	\xymatrix
	{
		F(d-1,d,n)\ar[dr]^{q} & F(d-1,d,d+1,n)\ar[r]^{pr_2} \ar[d]^{p}\ar[l]_{pr_1}& F(d,d+1,n)\ar[dl]_{r}\\
		& G(d,n), & \\
	}
\end{align}
where all the morphisms in the diagram are projections.

The mapping $q~(resp.~r)$ identifies $F(d-1,d,n)~(resp.~F(d,d+1,n)$ with the projective bundle $\mathbb{P}(H_{d})~(resp.~\mathbb{P}(Q_{n-d}^{*}))$ of $G(d,n)$.
Let $\mathscr{H}_{H_{d}^{*}}~(resp.~  \mathscr{H}_{Q_{n-d}})$ be the tautological line bundle on $F(d-1,d,d+1,n)$ associated to $F(d-1,d,n)~(resp.~ F(d,d+1,n))$, i.e.
\[\mathscr{H}_{H_{d}^{*}}={pr_1}^{*}\mathcal{O}_{F(d-1,d,n)}(-1)~(resp.~ \mathscr{H}_{Q_{n-d}}={pr_2}^{*}\mathcal{O}_{F(d,d+1,n)}(-1)).\]
So
\[{pr_1}_{*}\mathscr{H}_{H_{d}^{*}}=\mathcal{O}_{F(d-1,d,n)}(-1)~(resp.~ {pr_2}_{*}\mathscr{H}_{Q_{n-d}}=\mathcal{O}_{F(d,d+1,n)}(-1)).\]

%
%

Moreover, we also have the standard diagram
\begin{align}
	\xymatrix{
		F(d-1,d,d+1,n)\ar[r]^{s}\ar[d]^{p}&F(d-1,d+1,n)\\
		G(d,n).&\\}
\end{align}

The map $s$ identifies $F(d-1,d,d+1,n)$ with the projective bundle $\mathbb{P}(H_{d+1}/H_{d-1})$ over $F(d-1,d+1,n)$. Denote $\mathcal{O}_{s}(1)$ to be the relative Hopf bundle associate to $H_{d+1}/H_{d-1}$. In fact, $\mathcal{O}_{s}(1)=\mathscr{H}_{H_d^*}^*$.

\subsection{relative HN-filtration}

Let $E$ be a vector bundle of rank $r$ on $G(d,n)$ and $L$ be a line over $G(d,n)$. Suppose that $E|_{L}=\overset{k}{\underset{i=1}{\oplus}} O^{\oplus r_i}_{L}{(u_i)}$, where $u_1>u_2>\cdots >u_k$. If $k,r_1,r_2,\cdots,r_k,u_1,u_2,\cdots,u_k$ are independent of $L$, then $E$ is called uniform of splitting type  $(k,r_1,r_2,\cdots,r_k;u_1,u_2,\cdots,u_k)$.

For any uniform vector bundles E of type $(k,r_1,r_2,\cdots,r_k;u_1,u_2,\cdots,u_k)$ over $G(d,n)$,there exist a relative HN-filtration \cite{E-H-S} $HN^{i}$ of $p^{*}E$ over $F(d-1,d,d+1,n)$:

\begin{center}
	$0\subset HN^{1}(E)\subset HN^{2}(E)\subset \cdots \subset HN^{k}(E)=p^{*}E,$
\end{center}

where $HN^{i}(E)=Im[s^{*}s_{*}(p^{*}E\otimes \mathcal{O}_{s}(-u_i))\otimes \mathcal{O}_{s}(u_i)\longrightarrow p^{*}E]$ and $\mathcal{O}_{s}(u)=O^{\otimes u}_{s}(1)$.

Moreover, there are exact sequences:

\begin{center}
	$0\rightarrow HN^{i}(E)\rightarrow HN^{i+1}(E)\rightarrow s^{*}(E_i)\otimes \mathcal{O}_s(u_i)\rightarrow 0$,
\end{center}
where $E_i$ is a vector bundle of rank $r_i$ over $F(d-1,d+1,n)$.

\begin{definition}
	The Chern polynomial of a vector bundle $E$ of rank r over variety X is
\begin{center}
		$c_{E}(T)=T^r-c_1(E)T^{r-1}+c_2(E)T^{r-2}+\cdots+(-1)^{r}c_{r}(E)$
\end{center}
	
	where $c_{i}(E)$ is the $i$-th Chern class of $E$.
\end{definition}

\begin{lemma}[\cite{Guy}]\label{Guy}
	The Picard group of $F(d-1,d,d+1,n)$ is generated by $ p^*{\mathcal{O}_G(1)}, \mathscr{H}_{H_d^*}$ and $\mathscr{H}_{Q_{n-d}}$, and their Chern polynomials are $ T+(X_{1}+X_{2}+\cdots+X_{d}) ,T+X_d$ and $T-X_{d+1} $ respectively.
\end{lemma}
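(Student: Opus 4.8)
The plan is to realize $F:=F(d-1,d,d+1,n)$ as a tower of two projective bundles over the Grassmannian and to read off from this both its Picard group and the three first Chern classes. In the first standard diagram above one has $F=F(d-1,d,n)\times_{G(d,n)}F(d,d+1,n)$, so $pr_2\colon F\to F(d,d+1,n)$ is the base change of $q\colon F(d-1,d,n)\to G(d,n)$; since $q$ realizes $F(d-1,d,n)$ as the bundle of hyperplanes in $H_d$ — a $\mathbb P^{d-1}$-bundle — the same is true of $pr_2$ over $F(d,d+1,n)$. Likewise $r\colon F(d,d+1,n)\to G(d,n)$ realizes $F(d,d+1,n)$ as $\mathbb P(Q_{n-d}^{*})$, a $\mathbb P^{n-d-1}$-bundle; note $2\le d\le n-d$ makes both bundles of positive relative dimension. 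Finally, a Grassmannian is a rational homogeneous space of Picard number one, so $\operatorname{Pic}(G(d,n))=\mathbb Z\cdot\mathcal O_G(1)$.

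I would then apply the projective bundle formula $\operatorname{Pic}(\mathbb P(\mathcal E))=\pi^{*}\operatorname{Pic}(\text{base})\oplus\mathbb Z\cdot\mathcal O_{\mathbb P(\mathcal E)}(1)$, first to $r$ and then to $pr_2$, obtaining $\operatorname{Pic}(F)\cong\mathbb Z^{3}$, freely generated by $p^{*}\mathcal O_G(1)$, by the pullback along $pr_2$ of the relative $\mathcal O(1)$ of $r$, and by the relative $\mathcal O(1)$ of $pr_2$. By the definitions of the tautological line bundles above, $\mathscr H_{Q_{n-d}}=pr_2^{*}\mathcal O_{F(d,d+1,n)}(-1)$ is the inverse of the first of these two extra generators, and — using that under the base change $pr_2$ the relative $\mathcal O(1)$ of $pr_2$ equals $pr_1^{*}\mathcal O_{F(d-1,d,n)}(1)$ — the bundle $\mathscr H_{H_d^{*}}=pr_1^{*}\mathcal O_{F(d-1,d,n)}(-1)$ is the inverse of the second. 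Since replacing a line bundle by its inverse does not change the rank-one summand it spans, this shows that $\{\,p^{*}\mathcal O_G(1),\ \mathscr H_{H_d^{*}},\ \mathscr H_{Q_{n-d}}\,\}$ is a $\mathbb Z$-basis of $\operatorname{Pic}(F)$.

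It then remains to compute the three first Chern classes, which is a matter of unwinding the tautological descriptions together with $X_i=c_1(H_i/H_{i-1})$. From $\mathcal O_G(1)=\det H_d^{*}$ one gets $c_1(p^{*}\mathcal O_G(1))=-(X_1+\cdots+X_d)$; from $F(d-1,d,n)=\mathbb P(H_d)$, whose $\mathcal O(1)$ pulls back along $pr_1$ to $H_d/H_{d-1}$, one gets $c_1(\mathscr H_{H_d^{*}})=-X_d$; and from $F(d,d+1,n)=\mathbb P(Q_{n-d}^{*})$, whose $\mathcal O(-1)$ pulls back along $pr_2$ to $H_{d+1}/H_d$, one gets $c_1(\mathscr H_{Q_{n-d}})=X_{d+1}$. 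Plugging these into $c_L(T)=T-c_1(L)$ gives the Chern polynomials $T+(X_1+\cdots+X_d)$, $T+X_d$ and $T-X_{d+1}$ asserted in the statement; as a consistency check, these three classes span the degree-one part $\langle X_1+\cdots+X_{d-1},\,X_d,\,X_{d+1}\rangle$ of the Chow ring described in Theorem \ref{Chow}.

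The calculations themselves are routine; the one point that genuinely has to be watched is the convention bookkeeping in the last two paragraphs — fixing once and for all whether $\mathbb P(\mathcal E)$ parametrizes sub-lines or quotient lines, and then tracking consistently whether it is $\mathcal O(1)$ or $\mathcal O(-1)$ that restricts to $H_d/H_{d-1}$, resp.\ $H_{d+1}/H_d$, because this is exactly what determines the signs in the three Chern polynomials. The structural inputs — that $\operatorname{Pic}$ of a Grassmannian is infinite cyclic, and that the Picard group of a projective bundle is governed by the projective bundle formula — are the real substance but are entirely standard.
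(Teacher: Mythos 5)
Your argument is correct, but note that the paper itself offers no proof of this lemma: it is quoted verbatim from Guyot's paper \cite{Guy}, so there is nothing internal to compare against. What you supply is the standard self-contained derivation, and it holds up. The key structural observations are all right: $F(d-1,d,d+1,n)$ is the fibre product $F(d-1,d,n)\times_{G(d,n)}F(d,d+1,n)$, so $pr_2$ is the base change of the $\mathbb{P}^{d-1}$-bundle $q=\mathbb{P}(H_d)$ and $r$ is the $\mathbb{P}^{n-d-1}$-bundle $\mathbb{P}(Q_{n-d}^*)$ (both of positive relative dimension since $2\le d\le n-d$); two applications of the projective bundle formula starting from $\operatorname{Pic}(G(d,n))=\mathbb{Z}$ give $\operatorname{Pic}(F)\cong\mathbb{Z}^3$ with the stated generators, after the harmless replacement of each relative $\mathcal{O}(1)$ by its inverse to match the paper's tautological bundles $\mathscr{H}_{H_d^*}=pr_1^*\mathcal{O}_{F(d-1,d,n)}(-1)$ and $\mathscr{H}_{Q_{n-d}}=pr_2^*\mathcal{O}_{F(d,d+1,n)}(-1)$. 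The Chern class computations $c_1(p^*\mathcal{O}_G(1))=-(X_1+\cdots+X_d)$, $c_1(\mathscr{H}_{H_d^*})=-X_d$, $c_1(\mathscr{H}_{Q_{n-d}})=X_{d+1}$ are exactly what the convention $X_i=c_1(H_i/H_{i-1})$ and the paper's Chern polynomial $c_L(T)=T-c_1(L)$ require, and you correctly identify the sign bookkeeping as the only delicate point. One tiny caveat on your parenthetical consistency check: the degree-one part of the Chow ring in Theorem~\ref{Chow} also contains the class of the last symmetric block $X_{d+2}+\cdots+X_n$, which equals $-(X_1+\cdots+X_{d+1})$ only after imposing $\sum_i X_i=0$; this does not affect your proof, since the Picard computation rests on the projective bundle formula rather than on the Chow ring presentation.
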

The following lemma which can be found in \cite{E-H-S} will be used in our proofs.
\begin{lemma}[\cite{E-H-S} Proposition 3.5]\label{1-1lemma}
	Let $X$ be a projective manifold and $K$ be a vector bundle over $X$.  Suppose that $p$ is a morphism from $F=\mathbb{P}(K)$ to $X$ and $\mathscr{H}_K$ is the relative Hopf bundle over $F$. Then a vector bundle $E$ over $X$ has a subbundle isomorphic to $K$ if and only if $p^*E $ has a subbundle isomophic to $\mathscr{H}_K$.
\end{lemma}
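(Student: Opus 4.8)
The plan is to recast ``$E$ contains a subbundle isomorphic to $K$'' as the existence of a single morphism $\phi\colon K\to E$ that is injective on every fibre, to recast ``$p^*E$ contains a subbundle isomorphic to $\mathscr{H}_K$'' as the existence of a morphism $\mathscr{H}_K\to p^*E$ that is injective on every fibre, and then to match the two via a natural bijection. Write $\iota\colon\mathscr{H}_K\hookrightarrow p^*K$ for the tautological inclusion of the relative Hopf bundle, and for $\phi\in\operatorname{Hom}_X(K,E)$ set $\widehat{\phi}:=(p^*\phi)\circ\iota\in\operatorname{Hom}_F(\mathscr{H}_K,p^*E)$. I would first show that $\phi\mapsto\widehat{\phi}$ is a bijection: using the projection formula together with the defining property $p_*\mathscr{H}_K^{*}\cong K^{*}$ of the relative Hopf bundle, one obtains $\operatorname{Hom}_F(\mathscr{H}_K,p^*E)=H^0(F,\mathscr{H}_K^{*}\otimes p^*E)=H^0\bigl(X,(p_*\mathscr{H}_K^{*})\otimes E\bigr)=H^0(X,K^{*}\otimes E)=\operatorname{Hom}_X(K,E)$, and a short diagram chase (equivalently, the fact that $\iota^{*}$ induces the canonical isomorphism $K^{*}=p_*p^*K^{*}\xrightarrow{\ \sim\ }p_*\mathscr{H}_K^{*}$) shows that running through these identifications carries $\widehat{\phi}$ back to $\phi$.

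Next I would prove the core equivalence: a morphism $\phi\colon K\to E$ is a subbundle inclusion if and only if $\widehat{\phi}\colon\mathscr{H}_K\to p^*E$ is. Since $X$, and hence $F=\mathbb{P}(K)$, is reduced, a morphism of vector bundles over it is a subbundle inclusion exactly when it is injective on the fibre at every point. Now a point $f\in F$ is the datum of $x:=p(f)$ together with a line $kv\subseteq K_x$, and by construction of $\widehat{\phi}$ the fibre map $\widehat{\phi}_f$ is the composite $kv\hookrightarrow K_x\xrightarrow{\ \phi_x\ }E_x$, i.e.\ $v\mapsto\phi_x(v)$, which is injective iff $\phi_x(v)\neq 0$. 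As $f$ ranges over all of $F$, the pair $(x,v)$ ranges over all $x\in X$ and all nonzero $v\in K_x$; hence $\widehat{\phi}$ is injective on every fibre if and only if each $\phi_x$ is injective, i.e.\ if and only if $\phi$ is a subbundle inclusion. (Concretely, on a fibre $\mathbb{P}(K_x)$ this is just the elementary fact that a nowhere-vanishing morphism $\mathcal{O}_{\mathbb{P}(K_x)}(-1)\to E_x\otimes\mathcal{O}_{\mathbb{P}(K_x)}$ is the same thing as an injective linear map $K_x\to E_x$.)

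Putting the two steps together proves the lemma: $E$ has a subbundle isomorphic to $K$ iff some $\phi\colon K\to E$ is a subbundle inclusion, iff (by the bijection and the fibrewise equivalence above) some $\mathscr{H}_K\to p^*E$ is a subbundle inclusion, iff $p^*E$ has a sub-line-bundle isomorphic to $\mathscr{H}_K$ — for the last ``iff'', the image of a subbundle inclusion $\mathscr{H}_K\hookrightarrow p^*E$ is such a sub-line-bundle, and conversely a sub-line-bundle $\mathcal{L}\subseteq p^*E$ with $\mathcal{L}\cong\mathscr{H}_K$ yields the inclusion $\mathscr{H}_K\xrightarrow{\ \sim\ }\mathcal{L}\hookrightarrow p^*E$. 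The one point that really deserves care — and what I expect to be the genuine obstacle — is the surjectivity half of the bijection in the first step: that \emph{every} morphism $\mathscr{H}_K\to p^*E$, in particular every copy of the Hopf bundle inside $p^*E$, descends to an honest morphism $K\to E$ on $X$ and not merely to something that pushes forward to a map of sheaves. This is precisely where the identity $p_*\mathscr{H}_K^{*}\cong K^{*}$ (a defining feature of the relative Hopf bundle) is indispensable; everything else is the fibrewise dictionary above and the standard fact that over a reduced base a map of vector bundles is a subbundle inclusion exactly when it is injective on every fibre.
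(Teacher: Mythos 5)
Your proof is correct. Note that the paper itself gives no argument for this lemma---it is quoted verbatim from \cite{E-H-S} (Proposition 3.5)---so the comparison is with the source rather than with anything in this paper. Your route is the standard one and essentially the one in \cite{E-H-S}: the adjunction $\operatorname{Hom}_F(\mathscr{H}_K,p^*E)\cong H^0(X,p_*\mathscr{H}_K^{*}\otimes E)\cong\operatorname{Hom}_X(K,E)$ coming from the projection formula and $p_*\mathscr{H}_K^{*}\cong K^{*}$, combined with the observation that $\widehat{\phi}$ is fibrewise injective at a point $(x,[v])$ exactly when $\phi_x(v)\neq 0$, so nowhere-degeneracy of $\widehat{\phi}$ over $F$ is equivalent to nowhere-degeneracy of $\phi$ over $X$. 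You correctly isolate the one point that needs care (surjectivity of $\phi\mapsto\widehat{\phi}$, i.e.\ that every copy of $\mathscr{H}_K$ inside $p^*E$ descends), and your justification via the canonical isomorphism $p_*p^*K^{*}\xrightarrow{\sim}p_*\mathscr{H}_K^{*}$ is sound.
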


\section{(d+1)-uniform bundle over $G(d,n)$ when $d<n-d-1$}
From now on, we suppose that everything is over an algebraically closed fields in characteristic $0$.

First, we consider the Chern polynomial of $p^{*}E$ and its relative HN-filtration.
By Lemma \ref{Guy},
$c_{p^{*}E}(T)$ and $c_{s^{*}(E_i)}(T)$ are both homogeneous polynomials. Moreover, $c_{p^{*}E}(T)$ is symmetric about $X_1,X_2,\cdots X_d$ and $c_{s^{*}(E_i)}(T)$ is symmetric about $X_1,X_2,\cdots X_{d-1} $ and $ X_d,X_{d+1}$ respectively by Theorem \ref{Chow}.

 With the help of the relative HN-filtration, we deduce that  $c_{p^{*}E}(T)=\overset{k}{\underset{i=1}{\prod}}c_{s^{*}(E_i)\otimes \mathcal{O}_{s}(u_j)}(T)$  in the Chow ring $A(F(d-1,d,d+1,n))$, i.e,
 \begin{equation}\label{E=S}
 	c_{p^{*}E}(T)=\overset{k}{\underset{i=1}{\prod}}c_{s^{*}(E_i)\otimes \mathcal{O}_{s}(u_j)}(T)~~~(\text{mod} ~I).
 \end{equation}


\begin{lemma}[\cite{Guy} Proposition 4.1]\label{ES}
	If $$E(T;X_1,X_2,\cdots X_d)=\overset{k}{\underset{i=1}{\prod}}S_i(T+u_i X_d;X_1,X_2,\cdots X_{d-1};X_d,X_{d+1}),$$
	where $E(T;X_1,X_2,\cdots X_d)$ is a homogeneous polynomial symmetric about $X_1,X_2,\cdots X_d$ and
	$S_i(T;X_1,X_2,\cdots X_{d-1};X_d,X_{d+1})$ is a homogeneous polynomial symmetric about $X_1,X_2,\cdots X_{d-1} $ and $ X_d,X_{d+1}$ respectively.	
	Then every irreducible factor of $E(T;X_1,X_2,\cdots X_d)$ has the form  $T+\sum_{i=1}^{d}\lambda_i X_i$, where each $\lambda_i$ is a constant.
\end{lemma}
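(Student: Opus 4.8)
The plan is to pass from the given polynomial identity to a statement about the roots of $E$ viewed as a polynomial in $T$, and then to exploit the full symmetry of $E$ in $X_1,\dots,X_d$ — a symmetry that the product on the right does not visibly respect — to force every such root to be a linear form in $X_1,\dots,X_d$; once the roots are linear forms over $k(X_1,\dots,X_d)$, Gauss's lemma will immediately give the stated shape of the irreducible factors.

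First I would pin down the shape of the $S_i$. Since $E$ does not involve $X_{d+1}$, comparing degrees in $X_{d+1}$ on both sides of the identity, and using that $\deg_{X_{d+1}}$ is additive over products and is unchanged by the substitution $T\mapsto T+u_iX_d$, gives $\deg_{X_{d+1}}S_i=0$ for every $i$; as $S_i$ is symmetric in $X_d$ and $X_{d+1}$, it then also has $\deg_{X_d}S_i=0$, so each $S_i$ is a polynomial in $T,X_1,\dots,X_{d-1}$ only and
\[E(T;X_1,\dots,X_d)=\prod_{i=1}^{k}S_i(T+u_iX_d;X_1,\dots,X_{d-1}).\]
Writing $c_i$ for the leading coefficient of $S_i$ in $T$, the leading coefficient of $E$ in $T$ is $\prod_i c_i\in k[X_1,\dots,X_{d-1}]$; but by the symmetry of $E$ this must be symmetric in all of $X_1,\dots,X_d$, hence a nonzero constant, so every $c_i$ is a nonzero constant and $E$ is monic in $T$ up to a scalar.

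Next, over $\overline{k(X_1,\dots,X_{d-1})}$ I would factor $S_i=c_i\prod_m(T-\rho_{im})$; then the roots of $E$ in $T$, inside $\overline{k(X_1,\dots,X_d)}$, are exactly the elements $\beta_{im}=\rho_{im}-u_iX_d$. Fix $i,m$ and $l\le d-1$, and let $\phi$ be the automorphism of $\overline{k(X_1,\dots,X_d)}$ extending the transposition $X_l\leftrightarrow X_d$ of $k(X_1,\dots,X_d)$. Because $E$ is symmetric in $X_1,\dots,X_d$, $\phi$ permutes the roots $\beta_{im}$, so $\phi(\rho_{im})-u_iX_l=\rho_{i'm'}-u_{i'}X_d$ for suitable $i',m'$, where $\phi(\rho_{im})$ is obtained from $\rho_{im}$ by substituting $X_d$ for $X_l$. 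Applying the coordinate derivation $\partial/\partial X_d$ (uniquely extended to the algebraic closure from $k(X_1,\dots,X_d)$): the right-hand side differentiates to the constant $-u_{i'}$, since $\rho_{i'm'}$ involves only $X_1,\dots,X_{d-1}$, while the left-hand side differentiates, by the chain rule, to $\phi(\partial\rho_{im}/\partial X_l)$; hence $\partial\rho_{im}/\partial X_l=-u_{i'}\in k$. Carrying this out for every $l\le d-1$ shows that $\rho_{im}-\sum_{l<d}(\partial\rho_{im}/\partial X_l)X_l$ is algebraic over $k(X_1,\dots,X_{d-1})$ with all partial derivatives zero, hence lies in $k$ (characteristic zero), so each $\rho_{im}$ and each $\beta_{im}$ is an affine-linear form in $X_1,\dots,X_d$ with coefficients in $k$. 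Writing $\beta_{im}=\ell_{im}+\mu_{im}$ with $\ell_{im}$ linear and $\mu_{im}\in k$, the homogeneity of $E$ gives $E(tT;tX_1,\dots,tX_d)=t^rE(T;X_1,\dots,X_d)$, and comparing roots in $T$ yields the multiset identity $\{\ell_{im}+t^{-1}\mu_{im}\}=\{\ell_{im}+\mu_{im}\}$ for every $t$; grouping the roots by their (common) linear part, all power sums of the corresponding finite set of $\mu_{im}$ must vanish, so Newton's identities force every $\mu_{im}=0$. Thus every root of $E$ is a $k$-linear form, $E$ splits over $k(X_1,\dots,X_d)$ into factors $T-\beta_{im}$, and by Gauss's lemma every irreducible factor of $E$ in $k[T,X_1,\dots,X_d]$ is, up to a nonzero scalar, of the form $T+\sum_{i=1}^{d}\lambda_iX_i$ with $\lambda_i\in k$.

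The step I expect to be the main obstacle is the symmetry–derivation argument of the third paragraph: one must treat the $\rho_{im}$ as honest elements of $\overline{k(X_1,\dots,X_{d-1})}$, check that the transpositions and the coordinate derivations extend to the algebraic closure and commute correctly with the substitution $X_l\mapsto X_d$ so that the chain-rule identity $\partial_{X_d}\phi(\rho_{im})=\phi(\partial_{X_l}\rho_{im})$ is legitimate, and keep careful track of which variables each quantity actually depends on. By comparison, the reduction of the $S_i$ in the second paragraph and the final homogeneity and Gauss's-lemma bookkeeping are routine.
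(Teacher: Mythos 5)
The paper gives no proof of this lemma: it is quoted verbatim from Guyot (\cite{Guy}, Proposition 4.1), so there is no in-paper argument to measure yours against; judged on its own, your proof is correct. The opening reduction is sound: in the domain $k[T,X_1,\dots,X_{d+1}]$ the degree in $X_{d+1}$ is additive over products and is unchanged by $T\mapsto T+u_iX_d$, so the absence of $X_{d+1}$ from $E$ forces $\deg_{X_{d+1}}S_i=0$, and the $X_d\leftrightarrow X_{d+1}$ symmetry then removes $X_d$ as well. (Note that this step hinges on the identity being an exact polynomial identity; in the paper's application that is exactly what the degree bound $d+1<n-d$ guarantees, and it is also why the conclusion fails in Section 4, where correction terms from the ideal $I$ appear and the $S_i$ genuinely involve $X_d,X_{d+1}$.) The derivation step you single out as the main risk does go through: $\phi\circ\partial_{X_l}\circ\phi^{-1}$ and $\partial_{X_d}$ agree on $k(X_1,\dots,X_d)$ and derivations extend uniquely to separable algebraic extensions, so they agree on the whole algebraic closure, giving $\partial_{X_d}\phi(\rho_{im})=\phi(\partial_{X_l}\rho_{im})$; likewise $\partial_{X_d}$ kills everything algebraic over $k(X_1,\dots,X_{d-1})$, and in characteristic zero an element of $\overline{k(X_1,\dots,X_d)}$ annihilated by all coordinate derivations lies in $k$. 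Two small economies are available: the leading-coefficient argument already shows $\deg_TE=\deg E$, and once each root is an affine-linear form over $k$ you can skip the Newton's-identities computation, since every irreducible factor of a homogeneous polynomial is itself homogeneous, which forces $\mu_{im}=0$ directly. Your route through field automorphisms and derivations is a different flavour of argument from the specialization-and-symmetry manipulations used elsewhere in the paper (and, as far as the present text shows, from Guyot's), but it is complete and self-contained.
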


\subsection{Chern Polynomial of $p^{*}E$}
When the uniform vector bundle $E$ is of rank $d+1$ under the assumption $d<n-d-1$, the Chern polynomial $c_{p^{*}E}(T)$ is of degree $d+1$ and degree of non-zero elements in $I$ are of degrees great than $n-d>d+1$ by Theorem \ref{Guy}. So the equation (\ref{E=S}) is exactly
\[
 c_{p^{*}E}(T)=\overset{k}{\underset{i=1}{\prod}}c_{s^{*}(E_i)\otimes \mathcal{O}_{s}(u_j)}(T).\]
By Lemma \ref{ES}, $c_{p^{*}E}(T)$ has an irreducible polynomial of form:

\begin{center} $T+\lambda_{1}(\underbrace{X_{i_1}+\cdots+X_{j_1}}_{k_1})+\lambda_{2}(\underbrace{X_{i_2}+\cdots+X_{j_2}}_{k_2})+\cdots+\lambda_{l}(\underbrace{X_{i_l}+\cdots+X_{j_l}}_{k_l}),$
\end{center}

where $k_1+k_2+\cdots +k_{l}=d$ and $\lambda_{1}, \dots, \lambda_{l}$ are all distinct. Since $c_{p^{*}E}(T)$ is symmetric about $X_1,X_2,\cdots X_d$, its degree is at least $ \dfrac{d!}{{k_1}!{k_2}!\cdots{k_l}!} $.

\begin{proposition}\label{polynomial when d<n-d-1}
Suppose that $$E(T;X_1,X_2,\cdots X_d)=\overset{k}{\underset{i=1}{\prod}}S_i(T+u_i X_d;X_1,X_2,\cdots X_{d-1};X_d,X_{d+1}),$$
where $E(T;X_1,X_2,\cdots X_d)$ is a homogeneous polynomial of degree $d+1$ and symmetric about $X_1,X_2,\cdots X_d$
, and
$S_i(T;X_1,X_2,\cdots X_{d-1};X_d,X_{d+1})$ is a homogeneous polynomial symmetric about $X_1,X_2,\cdots X_{d-1} $ and $ X_d,X_{d+1}$ respectively.
If the degree of $E(T;X_1,X_2,\cdots X_d)$ is $d+1$, then
$$E(T;X_1,X_2,\cdots X_d)=\overset{k}{\underset{i=1}{\prod}}(T+\lambda_{i}(X_{1}+X_{2}+\cdots X_{d}))^{r_i} $$
	or
\begin{multline*}
	E(T;X_1,X_2,\cdots X_d)
	=\overset{d}{\underset{i=1}{\prod}}(T+\lambda_{1}X_{i}+\lambda_{2}(X_{1}+X_{2}+\cdots\\+\overset{\wedge}{X_i}+\cdots+ X_{d}))(T+\lambda'_{1}(X_{1}+X_{2}+\cdots+X_{d})).
\end{multline*}
\end{proposition}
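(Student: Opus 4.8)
The plan is to analyze the factorization combinatorially, using the rigid constraints imposed by the two symmetry requirements together with the degree bound $d+1$. By Lemma~\ref{ES}, every irreducible factor of $E(T;X_1,\dots,X_d)$ has the shape $T+\sum_{i=1}^d \lambda_i X_i$ for constants $\lambda_i$; since $E$ is symmetric in $X_1,\dots,X_d$, the full set of $S_d$-translates of any such factor must also divide $E$. As observed in the paragraph preceding the proposition, a factor whose coefficient vector $(\lambda_1,\dots,\lambda_d)$ takes $l$ distinct values with multiplicities $k_1,\dots,k_l$ contributes an orbit of size $d!/(k_1!\cdots k_l!)$ to the degree. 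So the first step is: enumerate the partitions $(k_1,\dots,k_l)$ of $d$ for which $d!/(k_1!\cdots k_l!)\le d+1$. For $d\ge 2$ one checks quickly that the only possibilities are (i) $l=1$, i.e. all $\lambda_i$ equal, giving an orbit of size $1$; and (ii) $l=2$ with $(k_1,k_2)=(1,d-1)$, giving an orbit of size $d$ (and for $d=2$ this coincides with a size-$2$ orbit). Any partition with an orbit strictly between these, or two distinct ``nontrivial'' orbits, already exceeds degree $d+1$.

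Next I would split into cases according to which orbit types actually appear. If only type-(i) factors appear, then $E=\prod_i (T+\lambda_i(X_1+\cdots+X_d))^{r_i}$ with $\sum r_i=d+1$, which is the first conclusion. Otherwise at least one type-(ii) orbit appears; its size is exactly $d$ (using $d\ge 2$), so it accounts for $d$ of the $d+1$ factors, leaving room for exactly one more factor, which by the degree/orbit count must be a type-(i) factor $T+\lambda_1'(X_1+\cdots+X_d)$. This forces
\[
E(T;X_1,\dots,X_d)=\prod_{i=1}^d\bigl(T+\lambda_1 X_i+\lambda_2(X_1+\cdots+\widehat{X_i}+\cdots+X_d)\bigr)\,\bigl(T+\lambda_1'(X_1+\cdots+X_d)\bigr),
\]
which is the second conclusion. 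One must also rule out the degenerate overlap $\lambda_1=\lambda_2$ in the type-(ii) factor (which would collapse it to type (i), handled already), and check that when $d=2$ the ``size-$2$'' orbit is genuinely of the displayed form — both are immediate.

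The remaining, and in my view main, obstacle is to show that the factorization hypothesis $E=\prod_i S_i(T+u_iX_d;\,X_1,\dots,X_{d-1};X_d,X_{d+1})$ is actually compatible with the two candidate forms — i.e. that the extra symmetry of the $S_i$ (symmetric separately in $X_1,\dots,X_{d-1}$ and in $X_d,X_{d+1}$, with the shift only in the $X_d$-slot) does not further obstruct, and in particular pin down how the $\lambda$'s relate to the $u_i$'s. Concretely, grouping the irreducible factors of $E$ according to which $S_i(T+u_iX_d;\cdots)$ they come from, each such group must itself be symmetric in $X_1,\dots,X_{d-1}$ and in $X_d,X_{d+1}$ after the substitution $T\mapsto T-u_iX_d$; tracking the $X_{d+1}$-dependence (which is absent from $E$ itself but present in each $S_i$) is what forces the coefficients of $X_d$ and $X_{d+1}$ inside each $S_i$ to match up, and this is exactly the computation that produces the specific coefficient patterns in the two listed forms. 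I would carry this out by expanding a single irreducible factor of $S_i$, say $T+\mu_1(X_1+\cdots+X_{d-1})+\mu_d X_d+\mu_{d+1}X_{d+1}$, substituting $T\mapsto T+u_iX_d$, and demanding the product over all factors lands in the subring independent of $X_{d+1}$ and symmetric in all of $X_1,\dots,X_d$; the only surviving solutions are the two asserted shapes. This bookkeeping, rather than any deep idea, is where the real work lies.
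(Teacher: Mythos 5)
Your argument is correct and follows essentially the same route as the paper: invoke Lemma~\ref{ES} to get linear irreducible factors $T+\sum_i\lambda_iX_i$, use the $S_d$-symmetry of $E$ to force the whole orbit $d!/(k_1!\cdots k_l!)$ of any factor to divide $E$, and conclude from the degree bound $d+1$ that only the partitions $(d)$ and $(1,d-1)$ survive, yielding exactly the two displayed forms (the paper phrases the all-type-(i) case as an induction after splitting off one symmetric linear factor, which is the same computation). The ``main obstacle'' you describe in your final paragraph --- pinning down how the $\lambda$'s relate to the $u_i$'s and checking compatibility with the $S_i$-factorization --- is not needed for this statement, which only asserts necessary forms with unspecified constants; that refinement is carried out separately in the paper's Proposition~\ref{juti ploynimal when d<n-d}.
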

\begin{proof}
By Lemma \ref{ES}, suppose that an irreducible polynomial of $E(T;X_1,X_2,\cdots X_d)$ is of form
\begin{center} $T+\lambda_{1}(\underbrace{X_{i_1}+\cdots+X_{j_1}}_{k_1})+\lambda_{2}(\underbrace{X_{i_2}+\cdots+X_{j_2}}_{k_2})+\cdots+\lambda_{l}(\underbrace{X_{i_l}+\cdots+X_{j_l}}_{k_l}),$
\end{center}
where $k_1+k_2+\cdots +k_{l}=d$ and $\lambda_{1}, \dots, \lambda_{l}$ are all distinct.
\begin{eqnarray*}
&&\dfrac{d!}{{k_1}!{k_2}!\cdots{k_l}!}\\
&=&\dfrac{(k_1+k_2-1)!}{k_1!k_2!}\cdot
\dfrac{(k_1+k_2+k_3-2)!}{(k_1+k_2-1)!k_3!}\cdot\cdots\cdot \dfrac{(k_1+\cdots+k_l-(l-1))!}{(k_1+\cdots+k_{l-1}-(l-2))!k_l!}\cdot\dfrac{d!}{(d-(l-1))!}\\
&\ge&\dfrac{d!}{(d-(l-1))!}.
\end{eqnarray*}

%
%
%
If $l\geq 3 $, then $ d(d-1)\leq \dfrac{d!}{(d-(l-1))!}\leq\dfrac{d!}{{k_1}!{k_2}!\cdots{k_l}!} \leq d+1 $ which is a contradiction since $ d\geq l\geq3$. Thus, $ l\leq2$.

Let's consider the case $l=2$ first. Without loss of generality, we may assume $k_{1}\leq k_{2}$.
%
If $k_1\geq 2$, then $\dfrac{3}{2}d\leq \dfrac{d!}{k_{1}!k_{2}!}\leq (d+1)$. So $d\leq 2 $ contradicts to $ d\geq 1+k_{1}\geq 3 $. So $ k_{1}=1$ and
$\overset{d} {\underset{i=1}{\prod}}(T+\lambda_{1}X_{i}+\lambda_2(X_{1}+X_{2}+\cdots+\overset{\wedge}{X_i}+\cdots+ X_{d})) $ divides $E(T;X_1,X_2,\cdots X_d)$ since $E(T;X_1,X_2,\cdots X_d)$ is symmetric about $X_1,X_2,\cdots X_d$. Comparing the degree of these two polynomials, we get the second identity in the statement.

If $l=1$, then $ E(T;X_1,X_2,\cdots X_d) $ has a factor $T+\lambda'_{1}(X_{1}+X_{2}+\cdots+X_{d} )$.
Assume that
\begin{center}
 	$E(T;X_1,X_2,\cdots X_d)=(T+\lambda'_{1}(X_{1}+X_{2}+\cdots+X_{d}))E'(T;X_1,X_2,\cdots X_d).$
 \end{center}
Obviously, $E'(T;X_1,X_2,\cdots X_d)$ satisfies the condition of Lemma \ref{ES}. So $E(T;X_1,X_2,\cdots X_d)$ is of the form

$$\overset{d}{\underset{i=1}{\prod}}(T+\lambda_{1}X_{i}+\lambda_2(X_{1}+X_{2}+\cdots+\overset{\wedge}{X_i}+\cdots+ X_{d}))(T+\lambda'_{1}(X_{1}+X_{2}+\cdots+X_{d}))$$ or

$$\overset{k}{\underset{i=1}{\prod}}(T+\lambda_{i}(X_{1}+X_{2}+\cdots X_{d}))^{r_i}$$ by induction.
\end{proof}

In \cite{Guy} (Proposition 2.5), Guyot proved that the uniform vector bundle $E$ with split type $(k;u_1,\cdots,u_k;r_1,\cdots,r_k)$ is an extension of two uniform vector bundles if $ u_i-u_{i+1}\geq2 $ for some $i$. So we only need to classify the uniform bundle when $u_i-u_{i+1}=1$ for all $i$. Now we are going to classify the uniform vector bundle with different Chern polynomials and prove Theorem \ref{char 0}.

\begin{proposition}\label{juti ploynimal when d<n-d}
Let $E$ be a uniform vector bundle of rank $d+1$ over $G(d,n)$.  If $u_i-u_{i+1}=1$ for all $i$, then the Chern polynomial of $p^*E$ (taking a dualization or tensoring with suitable line bundles) are one of the following cases.
	
	1. $c_{p^{*}E}(T)=\overset{k}{\underset{i=1}{\prod}}(T+u_{i}(X_{1}+X_{2}+\cdots X_{d}))^{r_i}$;
	
	2. $c_{p^{*}E}(T) =\overset{d}{\underset{i=1}{\prod}}(T+X_{i})(T-(X_{1}+X_{2}+\cdots+X_{d}) $;
	
	
	3. $ c_{p^*E}(T)=\overset{d}{\underset{i=1}{\prod}}(T+X_{i})T $;
	
	
	4. $ c_{p^*E}(T)=\overset{d}{\underset{i=1}{\prod}}(T+X_{i})(T+(X_{1}+X_{2}+\cdots+X_{d})) $;
	
	
	5. $ c_{p^*E}(T)=\overset{d}{\underset{i=1}{\prod}}(T+X_{i})(T+2(X_{1}+X_{2}+\cdots+X_{d})) $;
	
	
	6.	$ c_{p^*E}(T) =\overset{d}{\underset{i=1}{\prod}}(T+2X_{i})(T+(X_{1}+X_{2}+\cdots+X_{d})) $.
	
\end{proposition}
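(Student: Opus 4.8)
The plan is to feed $c_{p^{*}E}(T)$ into Proposition~\ref{polynomial when d<n-d-1} and then prune the resulting list using the hypothesis $u_{i}-u_{i+1}=1$. Since we are in the range $d<n-d-1$, the ideal $I$ of Theorem~\ref{Chow} is generated in degrees $>n-d>d+1$, so (\ref{E=S}) is an honest identity of homogeneous polynomials of degree $d+1=\operatorname{rk}E$, symmetric in $X_{1},\dots,X_{d}$; hence Proposition~\ref{polynomial when d<n-d-1} applies and, after merging equal factors, either
\[
c_{p^{*}E}(T)=\prod_{i=1}^{k}\bigl(T+\lambda_{i}(X_{1}+\cdots+X_{d})\bigr)^{r_{i}},\qquad \lambda_{1},\dots,\lambda_{k}\ \text{pairwise distinct},
\]
or
\[
c_{p^{*}E}(T)=\prod_{i=1}^{d}\bigl(T+\lambda_{1}X_{i}+\lambda_{2}(X_{1}+\cdots+\widehat{X_{i}}+\cdots+X_{d})\bigr)\,\bigl(T+\lambda'_{1}(X_{1}+\cdots+X_{d})\bigr),
\]
where in the second form $\lambda_{1}\ne\lambda_{2}$ (otherwise it collapses to the first).

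The key intermediate step is to identify $\lambda_{1},\lambda_{2},\lambda'_{1}$ (resp.\ the $\lambda_{i}$) with the integers occurring in the splitting type. Pick a line $\ell\subset G(d,n)$; it has the form $\ell=\{A:W_{d-1}\subset A\subset W_{d+1}\}$, and the curve $C=\{(W_{d-1},A,W_{d+1}):W_{d-1}\subset A\subset W_{d+1}\}\subset F(d-1,d,d+1,n)$ is carried isomorphically onto $\ell$ by $p$. On $C\cong\mathbb{P}^{1}$ the subbundles $H_{1},\dots,H_{d-1}$ are constant while $H_{d}/H_{d-1}$ is a line subbundle of the trivial bundle $W_{d+1}/W_{d-1}$, so $X_{1},\dots,X_{d-1}$ restrict to $0$ and $X_{d}$ restricts to $\pm h$ (the sign will not matter). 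Restricting $c_{p^{*}E}(T)=p^{*}c_{E}(T)$ to $C$ thus returns $c_{E|_{\ell}}(T)=\prod_{i}(T-u_{i}h)^{r_{i}}$. For the first form this forces the multiset $\{u_{i}\}$ to equal $\{\pm\lambda_{i}\}$ with the same multiplicities $r_{i}$, so the $\lambda_{i}\in\mathbb{Z}$ and, since $u_{i}-u_{i+1}=1$, they are $k$ consecutive integers: this is case~1. For the second form the factor with $i=d$ restricts to $T\pm\lambda_{1}h$, each factor with $1\le i\le d-1$ to $T\pm\lambda_{2}h$, and the last factor to $T\pm\lambda'_{1}h$ (common sign); hence the splitting type consists of the value $\lambda_{1}$ with multiplicity $1$, the value $\lambda_{2}$ with multiplicity $d-1$, and the value $\lambda'_{1}$ with multiplicity $1$, all integers, and the distinct values among $\{\lambda_{1},\lambda_{2},\lambda'_{1}\}$ form a block of consecutive integers because $u_{i}-u_{i+1}=1$.

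It remains to normalize and enumerate. Tensoring $E$ by $\mathcal{O}_{G(d,n)}(m)$ translates $(\lambda_{1},\lambda_{2},\lambda'_{1})$ by $(m,m,m)$, and dualizing $E$ negates the triple while keeping $\lambda_{2}$ (the entry of multiplicity $d-1$; when $d=2$ either of $\lambda_{1},\lambda_{2}$, using the $X_{1}\leftrightarrow X_{2}$ symmetry of the second form) in its role, so I may assume $\lambda_{2}=0$ and $\lambda_{1}>0$, with $\lambda_{1}\ne0$. If $\{0,\lambda_{1},\lambda'_{1}\}$ has three elements it must equal $\{-1,0,1\}$ or $\{0,1,2\}$, giving $(\lambda_{1},\lambda'_{1})\in\{(1,-1),(1,2),(2,1)\}$; substituting back into the second form produces cases~2, 5 and 6. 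If it has two elements, then either $\lambda'_{1}=0$ and $\lambda_{1}=1$ (case~3) or $\lambda'_{1}=\lambda_{1}=1$ (case~4). Plugging $\lambda_{2}=0$ into the second form in each subcase yields precisely the six displayed polynomials.

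The part requiring the most care, and the one I view as the main obstacle, is the middle step: pinning down the dictionary between the formal coefficients $\lambda_{\bullet}$ and the splitting integers $u_{i}$, which rests on knowing how the tautological classes restrict to a test line and, in the second form, on correctly bookkeeping the multiplicities $(1,d-1,1)$ — with $d=2$ handled separately since there $\lambda_{1}$ and $\lambda_{2}$ are no longer distinguished by multiplicity. Once that dictionary is established, the rest is the elementary observation that three integers whose distinct values form a consecutive block, combined with the two normalizing operations, leave only the listed possibilities.
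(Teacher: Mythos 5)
Your proof is correct and follows essentially the same route as the paper: apply Proposition \ref{polynomial when d<n-d-1}, identify the coefficients $\lambda_{\bullet}$ with the splitting-type integers $u_i$, and then enumerate the consecutive-integer possibilities after normalizing by dualization and twisting. The only real difference is that you justify the dictionary $\lambda_{\bullet}\leftrightarrow u_i$ explicitly (and carefully, including the $d=2$ multiplicity issue) by restricting to a test line, whereas the paper simply asserts that $\lambda_1,\lambda_2,\lambda_3$ are among $u_1,u_2,u_3$.
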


\begin{proof}
With the result of Proposition \ref{polynomial when d<n-d-1}, $ c_{p^*E}(T)$ is of the form $$\overset{k}{\underset{i=1}{\prod}}(T+u_{i}(X_{1}+X_{2}+\cdots X_{d}))^{r_i}$$ or
\begin{center}
	$\overset{d}{\underset{i=1}{\prod}}(T+\lambda_{1}X_{i}+\lambda_2(X_{1}+X_{2}+\cdots+\overset{\wedge}{X_i}+\cdots+ X_{d}))(T+\lambda_{3}(X_{1}+X_{2}+\cdots+X_{d})$,
\end{center}
where $\lambda_{1},\lambda_{2},\lambda_{3}$ are one of $u_1,u_2,u_3$. After taking dualization of $E$ or tensoring $E$ with $\mathcal{O}_{G(d,n)}(-\lambda_{2})$, the Chern polynomial of $p^*E$ must be one in the statement.
\end{proof}

\subsection{classification of $(d+1)$-uniform bundle over $G(d,n)$ when $d<n-d-1$}
We will consider the Chern polynomials in the Proposition \ref{juti ploynimal when d<n-d} one by one.

\begin{lemma}
Taking a dualization or tensoring with suitable line bundles if necessary, the uniform vector bundle corresponding to case 1, 2, 3, 5 in Proposition \ref{juti ploynimal when d<n-d} are 	
	 $$ \overset{k}{\underset{i=1}{\oplus}} \mathcal{O}_{G(d,n)}{(a_i)},$$ or $$ E\cong H_d^*\bigoplus \mathcal{O}_{G(d,n)}{(j)} $$ for some integers $a_i$ and $j$.
\end{lemma}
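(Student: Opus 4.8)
The plan is to handle the four Chern-polynomial cases by analyzing the relative HN-filtration of $p^*E$ on $F(d-1,d,d+1,n)$ and then descending the resulting subbundles back to $G(d,n)$ via Lemma \ref{1-1lemma}. The starting observation is that in each of cases 1, 2, 3, 5 the Chern polynomial of $p^*E$ factors (after the normalizations allowed in Proposition \ref{juti ploynimal when d<n-d}) into linear factors of the very special shape $T + \lambda(X_1+\cdots+X_d)$, i.e. each factor is (up to a twist) the pullback of $c_1$ of a line bundle on $G(d,n)$ itself, not merely on the flag variety. Concretely: case 1 is $\prod_{i=1}^k (T+u_i(X_1+\cdots+X_d))^{r_i}$; case 3 is $\prod_{i=1}^d (T+X_i)\cdot T$, whose factors are \emph{not} all of that shape, so case 3 is the one where the universal (co)subbundle $H_d$ genuinely appears; cases 2 and 5 are $\prod(T+X_i)$ times $(T\mp(X_1+\cdots+X_d))$ and similarly produce an $H_d^*$-type summand.

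First I would dispose of case 1. Here every $E_i$ appearing in the HN-filtration has rank $r_i$ and, by comparing Chern polynomials in $A(F(d-1,d,d+1,n))$ using \eqref{E=S} and the fact that the only relations live in degree $>d+1$, one forces $c_{s^*(E_i)\otimes\mathcal{O}_s(u_i)}(T) = (T+u_i(X_1+\cdots+X_d))^{r_i}$. Thus $s^*E_i$ has Chern polynomial $(T+u_i(X_1+\cdots+X_d) - u_i X_d)^{r_i}$ — a pure power of a linear form that is a pullback from $G(d,n)$ via $p$. Pulling the filtration down, one shows each graded piece is a twist of a trivial bundle: the key point is that a uniform bundle whose restriction to every line is $\mathcal{O}_L(u_i)^{\oplus r_i}$ with constant Chern polynomial $p^*\mathcal{O}_{G}(u_i)^{r_i}$ must be $\mathcal{O}_G(u_i)^{\oplus r_i}$ (this is the rank-one part of Guyot's theorem, or follows by induction splitting off the extremal graded piece of the HN-filtration, which is itself a bundle with this property on a smaller flag variety). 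Inductively peeling off $HN^k/HN^{k-1}$ and checking $\mathrm{Ext}^1$ vanishing (all extensions of $\mathcal{O}_G(a)$ by $\mathcal{O}_G(b)$ with $a>b$ on $G(d,n)$, $d\ge 2$, split since $H^1(G(d,n),\mathcal{O}_G(b-a))=0$) gives $E\cong\bigoplus_i \mathcal{O}_{G(d,n)}(a_i)$.

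For cases 2, 3 and 5 the argument is uniform: after the normalization the Chern polynomial contains the factor $\prod_{i=1}^d(T+X_i)$, which is exactly $c_{H_d^*}(T)$ pulled back to the flag variety (Lemma \ref{Guy}), together with one extra linear factor $T+\lambda'(X_1+\cdots+X_d)$ which is $c_1$ of a line bundle pulled back from $G(d,n)$. The strategy is to show that the HN-filtration of $p^*E$ produces a subbundle (or quotient) of $p^*E$ isomorphic to the pullback of $H_d^*$ up to the appropriate twist, hence by Lemma \ref{1-1lemma} applied with $K=H_d$ (using $q\colon F(d-1,d,n)=\mathbb{P}(H_d)\to G(d,n)$ and relating $\mathscr{H}_{H_d^*}$ to the HN-data) that $E$ itself, up to twist and dualization, has $H_d$ or $H_d^*$ as a sub- or quotient bundle; the complementary line bundle then splits off because the relevant $\mathrm{Ext}^1$ group — $H^1(G(d,n), H_d(\ast))$ or $H^1(G(d,n), H_d^*(\ast))$ — vanishes in the range $d\ge 2$, $d<n-d-1$ by Bott's theorem. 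The main obstacle, and the place I would spend the most care, is precisely this identification step: verifying that the graded pieces $E_i$ of the relative HN-filtration, which a priori are only bundles on $F(d-1,d+1,n)$ with prescribed Chern classes, actually \emph{are} (twists of pullbacks of) $H_d^*$ and trivial bundles — rigidity of the bundle from its Chern data is not automatic and requires using uniformity of $E$ (constancy of splitting type on every line, in every "direction" of the flag) together with the projective-bundle structure of $q$ and $r$ to pin down the extension data; once the subbundle is identified, the splitting of the extension is a routine cohomology vanishing.
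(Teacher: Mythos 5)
Your overall architecture (relative HN-filtration, descent to $G(d,n)$ via Lemma \ref{1-1lemma}, splitting by $\mathrm{Ext}^1$-vanishing) matches the paper's, and your case 1 is essentially Guyot's Proposition 5.2 spelled out. But the step you yourself flag as ``the main obstacle'' --- identifying the graded pieces of the filtration --- is exactly the step you never carry out, and the way you frame it (proving that a rank-$d$ graded piece $E_i$, known a priori only through its Chern classes, \emph{is} a twist of $H_d^*$, i.e.\ a rigidity statement on the flag variety) is not how the difficulty is resolved and would be hard to push through as stated. The paper never needs rigidity of a rank-$d$ bundle from Chern data. In cases 2 and 3 the top splitting value has multiplicity $r_1=1$, so $HN^1$ is a \emph{line} bundle; its Chern polynomial is $T+X_d=c_{\mathscr{H}_{H_d^*}}(T)$, and since the Picard group of the flag variety is free, equality of first Chern classes already gives $HN^1\cong\mathscr{H}_{H_d^*}$ with no further argument. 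Lemma \ref{1-1lemma} then converts the line subbundle $\mathscr{H}_{H_d^*}\subset p^*E$ directly into $H_d^*\subset E$ --- note that the lemma asks for the tautological \emph{line} bundle $\mathscr{H}_K$ inside $p^*E$, not for a rank-$d$ subbundle isomorphic to the pullback of $H_d^*$, as your phrasing suggests. The rank-one quotient is then pinned down by $c_1$ and split off by $H^1(G(d,n),H_d^*(-i))=0$.

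Case 5 also cannot be lumped with cases 2 and 3: there the top splitting value is $u_1=2$, coming from the factor $T+2(X_1+\cdots+X_d)$, so $HN^1\cong p^*\mathcal{O}_{G(d,n)}(2)$; pushing forward gives $0\to\mathcal{O}_{G(d,n)}(2)\to E\to p_*F\to 0$ with $p_*F$ a uniform rank-$d$ bundle of splitting type $(1,0,\dots,0)$. The identification $p_*F\cong H_d^*$ is obtained not by rigidity from Chern data but by quoting the completed classification of uniform rank-$d$ bundles on $G(d,n)$ (Proposition 5.4 of \cite{Guy}, or Theorem 1.1 of \cite{D-F-G}); this citation is the missing ingredient that closes the gap you left open. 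With these two corrections supplied, the remainder of your outline (the $\mathrm{Ext}^1$-vanishing for the splitting) goes through as in the paper.
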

\begin{proof}
For case 1, by Proposition 5.2 in \cite{Guy}, $E\cong\overset{k}{\underset{i=1}{\oplus}} \mathcal{O}_{G(d,n)}{(u_i)}$.

For case 2 and 3, by Proposition \ref{polynomial when d<n-d-1}, $S_1(T;X_1,X_2,\cdots X_{d-1};X_d,X_{d+1})$ is a homogeneous polynomial symmetric about $X_1,X_2,\cdots X_{d-1} $ and $ X_d,X_{d+1}$ respectively. So
$c_{HN^1}(T)=T+X_{d}=c_{\mathscr{H}_{H_{d}^{*}}}(T)$. Thus $HN^1$ is a line bundle and $HN^1\cong \mathscr{H}_{H_{d}^{*}}$. Thus $ p^*E $ has $\mathscr{H}_{H_{d}^{*}} $ as a subbundle. By Lemma \ref{1-1lemma}, $E$ has $H_d^*$ as subbundle. So we have following exact sequence for some quotient bundle $\mathcal{O}_{G(d,n)}(i)$:
\begin{center}
	$ 0\longrightarrow H_{d}^{*}  \longrightarrow E \longrightarrow \mathcal{O}_{G(d,n)}(i) \longrightarrow 0 ,$
\end{center}
where $i=0$ or $-1$. Thus $ E\cong H_d^*\bigoplus \mathcal{O}_{G(d,n)}{(i)} $ since $$Ext^1(\mathcal{O}_{G(d,n)}(i), H_d^*)=H^1(G(d,n),H_d^*\otimes \mathcal{O}_{G(d,n)}(-i))=0.$$

For case 5,
\begin{center}
	$ c_{p^*E}(T)=\overset{d}{\underset{i=1}{\prod}}(T+X_{i})(T+2(X_{1}+X_{2}+\cdots+X_{d})) $.
\end{center}

Comparing the Chern polynomial of $ HN^1 $ and $p^{*}\mathcal{O}_{G(d,n)}(2) $, we know that they are isomorphic to each other. So the following exact sequence holds for some quotient bundle $F$.

\begin{center}
	$ 0\longrightarrow p^{*}\mathcal{O}_{G(d,n)}(2) \longrightarrow  p^*E\longrightarrow F\longrightarrow 0 $.

\end{center}

Applying $ p_{*} $ to the short exact sequence, we have the exact sequence

\begin{center}
	$ 0\longrightarrow \mathcal{O}_{G(d,n)}(2)  \longrightarrow E \longrightarrow p_*F \longrightarrow 0 $,
\end{center}
since $ R^1p_{*} \mathcal{O}_{G(d,n)}(2)=0$.
Restricting the exact sequence to a line in $G(d,n)$, we know that $p_*F$ must be a uniform vector bundle of rank $d$. So, by comparing the splitting type and Chern polynomial of $p_*F$, it can be seen that $p_*F$ is isomorphic to $ H_{d}^{*}$  by Proposition 5.4 in \cite{Guy} or Theorem 1.1 in \cite{D-F-G}. Thus $ E\cong  H_{d}^{*} \bigoplus \mathcal{O}_{G(d,n)}(2) $ or direct sum of line bundles because $Ext^1((p_*F)^{*}, \mathcal{O}_{G(d,n)}(2))=0$.

\end{proof}

The following lemma solves for case 4 in Proposition \ref{juti ploynimal when d<n-d}.
\begin{lemma}\label{(1,1,000case)}
	When
	\[ c_{p^*E}(T)=\overset{d}{\underset{i=1}{\prod}}(T+X_{i})(T+(X_{1}+X_{2}+\cdots+X_{d})),\] the vector bundle $E\cong H_d^*\bigoplus \mathcal{O}_{G(d,n)}(1)$.
\end{lemma}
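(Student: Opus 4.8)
The plan is to mimic the structure used for cases 2, 3, and 5, but now with the additional difficulty that the subbundle produced by the relative HN-filtration is not a line bundle.

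First I would analyze the relative HN-filtration. Since $u_1 - u_2 = 1$ and $c_{p^*E}(T) = \prod_{i=1}^{d}(T+X_i)(T+(X_1+\cdots+X_d))$, the filtration has two steps, with $HN^1$ corresponding to the factors of slope $u_1$ and the quotient to slope $u_2 = u_1 - 1$. I would compute which factors belong to which step. The symmetric factor $T + (X_1+\cdots+X_d) = c_{p^*\mathcal{O}_{G(d,n)}(1)}(T)$ and the product $\prod_{i=1}^{d}(T+X_i)$; one of these (after normalizing so $u_1 = 0$, $u_2 = -1$, i.e. tensoring by $\mathcal{O}_{G(d,n)}(-1)$ or similar) should be $HN^1$. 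Checking Chern polynomials against $\mathscr{H}_{H_d^*}$, $\mathscr{H}_{Q_{n-d}}$, and $p^*\mathcal{O}_G(1)$ via Lemma \ref{Guy}, I expect $HN^1$ to have Chern polynomial $\prod_{i=1}^{d}(T+X_i)$, which is the Chern polynomial of $p^* H_d^*$ (since $c(H_d^*) = \prod(1+X_i)$ pulled back). The point is that $\prod_{i=1}^d (T+X_i)$ is exactly the pullback of the Chern polynomial of $H_d^*$ over $G(d,n)$, so $HN^1 \cong p^* H_d^*$ as it is a relatively semistable bundle with the right Chern data — here I would need the rigidity/uniqueness of the HN-pieces (they descend to $F(d-1,d+1,n)$, and one identifies them by their Chern classes, as was done implicitly for case 5).

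Next, from $HN^1 = p^*H_d^*$ being a subbundle of $p^*E$, I would like to descend to get $H_d^* \hookrightarrow E$ on $G(d,n)$. Lemma \ref{1-1lemma} handles the case of descending a subbundle isomorphic to a pulled-back bundle when the projective bundle is $\mathbb{P}(K)$; but here $p$ is not of that form. Instead I would push forward: apply $p_*$ to $0 \to p^*H_d^* \to p^*E \to F \to 0$ (where $F$ is the quotient, with $c_F(T) = T + (X_1+\cdots+X_d)$, so $F$ is a line bundle). Since $p_* p^* H_d^* = H_d^*$ and $p_* p^* E = E$ and $R^1 p_* p^* H_d^* = 0$, I get $0 \to H_d^* \to E \to p_* F \to 0$ on $G(d,n)$, with $p_* F$ a line bundle, namely $\mathcal{O}_{G(d,n)}(1)$ after computing $c_1$. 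Then the extension splits because $\mathrm{Ext}^1(\mathcal{O}_{G(d,n)}(1), H_d^*) = H^1(G(d,n), H_d^*(-1)) = 0$ (this vanishing is standard via Bott's theorem, and is of the same type used for case 2 and 3). This yields $E \cong H_d^* \oplus \mathcal{O}_{G(d,n)}(1)$.

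The main obstacle I anticipate is the identification $HN^1 \cong p^* H_d^*$: knowing the Chern polynomial of a bundle does not in general pin down the bundle, so I must exploit that $HN^1$ is a relative HN-piece (hence relatively semistable of the appropriate slope and, by the structure of the filtration, the pullback of a bundle $E_1$ on $F(d-1,d+1,n)$), and that on each line its restriction is trivial of rank $d$. I would argue that $E_1$ is itself uniform of the right type on $F(d-1,d+1,n)$ and invoke the classification of such low-rank uniform bundles (as in the reference to Proposition 5.4 of \cite{Guy} or Theorem 1.1 of \cite{D-F-G} used for case 5) to conclude $HN^1 \cong p^* H_d^*$. A secondary subtlety is tracking the normalization (which dualization/twist puts us exactly in this Chern-polynomial case), but that is bookkeeping rather than a real difficulty.
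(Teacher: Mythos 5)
There is a genuine gap at the very first step: you have misidentified the relative HN-filtration. With the paper's normalization, the factor $T+X_d$ is the Chern polynomial of $\mathscr{H}_{H_d^*}$, which restricts to $\mathcal{O}(1)$ on the lines (the fibres of $s$), the factors $T+X_i$ for $i<d$ restrict to degree $0$, and the factor $T+(X_1+\cdots+X_d)=c_{p^*\mathcal{O}_{G(d,n)}(1)}(T)$ restricts to degree $1$. Hence the splitting type in this case is $(1,1,0,\dots,0)$, and the canonical filtration has $HN^1$ of rank $2$ with
\[c_{HN^1}(T)=(T+X_d)\bigl(T+(X_1+\cdots+X_d)\bigr),\]
with quotient of rank $d-1$ and Chern polynomial $\prod_{i=1}^{d-1}(T+X_i)$. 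Your proposed $HN^1$, with Chern polynomial $\prod_{i=1}^{d}(T+X_i)$, mixes factors of two different fibre degrees, so it cannot be an HN-piece: each graded piece $s^*(E_i)\otimes\mathcal{O}_s(u_i)$ has constant degree $u_i$ on every line, whereas $p^*H_d^*$ restricts to $\mathcal{O}(1)\oplus\mathcal{O}^{\oplus d-1}$. Consequently the exact sequence $0\to p^*H_d^*\to p^*E\to(\text{line bundle})\to 0$ that you want to push forward is not produced by the filtration, and the remainder of your argument --- the descent of $H_d^*$, the identification of the quotient with $\mathcal{O}_{G(d,n)}(1)$, and the $\mathrm{Ext}^1$ computation --- has no starting point. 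Your fallback (classifying the descended bundle $E_1$ on $F(d-1,d+1,n)$ as a uniform rank-$d$ bundle) is predicated on the same misidentification, since the true $E_1$ has rank $2$.

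For contrast, the paper works with the genuine rank-$2$ piece: it shows $HN^1$ is trivial on the fibres of $pr_1$, hence equals $pr_1^*(pr_{1*}HN^1)$ for a rank-$2$ bundle on $F(d-1,d,n)$; a first Chern class computation shows that bundle restricts to $\mathcal{O}\oplus\mathcal{O}(-1)$ on the fibres of $q$, so $q_*(pr_{1*}HN^1)$ is a line bundle, producing a subbundle $\mathcal{O}_{G(d,n)}(1)\subset E$ whose quotient is uniform of rank $d$ with splitting type $(1,0,\dots,0)$ and hence isomorphic to $H_d^*$ by Proposition 5.4 in \cite{Guy} or Theorem 1.1 in \cite{D-F-G}; the extension then splits by the vanishing of $\mathrm{Ext}^1(H_d^*,\mathcal{O}_{G(d,n)}(1))$. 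To salvage your outline you would need an argument of this kind (a two-step descent of the rank-$2$ piece, or some other construction) that actually exhibits $\mathcal{O}_{G(d,n)}(1)$ or $H_d^*$ as a subbundle of $E$; the Chern polynomial alone does not provide one.
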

\begin{proof}
The HN-filtration gives the exact sequence
\begin{equation}\label{ex1}
	 0\longrightarrow HN^1\longrightarrow p^*E \longrightarrow F \longrightarrow 0 	
\end{equation}
for some quotient bundle $F$.
Then we have
\[c_{HN^1}(T)=(T+X_{d})(T+(X_{1}+X_{2}+\cdots+X_{d}))\]
and
\begin{eqnarray}
-c_1(HN^1)&=&X_{1}+\cdots+X_{d-1}+2X_{d}\nonumber\\
&=&c_1(H_1)+c_1(H_2/H_1)+\cdots+c_1(H_{d-1}/H_{d-2})+2c_1(H_d/H_{d-1})\nonumber\\
&=&-c_1(H_{d-1})+2c_1(H_d).\label{c1r}
\end{eqnarray}

Restricting the HN-filtration to the fiber $pr_{1}^{-1}(y)$ at a point $y\in F(d-1,d,n)$, we have
\begin{center}
	$ 0\longrightarrow HN^1|_{pr_{1}^{-1}(y)}\longrightarrow p^*E|_{pr_{1}^{-1}(y)} \longrightarrow F|_{pr_{1}^{-1}(y)} \longrightarrow 0, $
	\end{center}
and $c_1(HN^1|_{pr_{1}^{-1}(y)})=0$ by equation (\ref{c1r}). Notice that  $pr_{1}^{-1}(y)$ is a subvariety of $p^{-1}(q(y))$, so $p^*E|_{pr_{1}^{-1}(y)}$ is a trivial bundle. Thus $HN^1|_{pr_{1}^{-1}(y)}$ is a trivial bundle and $pr_{1*}HN^1$ is a $2$-bundle over $F(d-1,d,n)$.

Applying $pr_{1*}$ to the exact sequence (\ref{ex1}), we have
\begin{center}
	$ 0\longrightarrow pr_{1*}HN^1\longrightarrow q^*E \longrightarrow pr_{1*}F \longrightarrow 0.$	
\end{center}

Since the canonical morphism from $pr^*_1(pr_{1*}HN^1)$ to $HN^1$ restrict to any fiber of $pr_1$ is an isomorphism as two trivial bundles of rank $2$, we can get $pr^*_1(pr_{1*}HN^1)\cong HN^1$. So $ c_1(HN^1)=pr^*_{1}c_1(pr_{1*}HN^1) $. By equation (\ref{c1r}), $$c_1(HN^1)=c_1(H_{d-1})-2c_1(H_d)=pr_{1}^{*}(c_1(\overset{\sim}{H_{d-1}})-2c_1(\overset{\sim}{H_d})), $$
where $\overset{\sim}{H_{d-1}}$ and $\overset{\sim}{H_d}$ are universal subbundles over $F(d-1,d,n)$. Thus $$c_1(pr_{1*}HN^1)=c_1(\overset{\sim}{H_{d-1}})-2c_1(\overset{\sim}{H_d}).$$
So, when restrict $c_1(pr_{1*}HN^1)$ to the fiber of $q$ at $x\in G(d, n)$, we have
$$c_1(pr_{1*}HN^1)|_{q^{-1}(x)}=\mathcal{O}_{q^{-1}(x)}(-1).$$
Thus $pr_{1*}HN^1|_{q^{-1}(x)}=\mathcal{O}_{q^{-1}(x)}\oplus \mathcal{O}_{q^{-1}(x)}(-1)$ since $pr_{1*}HN^1|_{q^{-1}(x)}$ is a subbundle of $q^*E|_{q^{-1}(x)}$ which is a trivial bundle. Therefore, $q_*(pr_{1*}HN^1)$ is a line bundle over $G(d,n)$, i.e. $q_*(pr_{1*}HN^1)=\mathcal{O}_{G(d,n)}(i) $ for some $i$.
So we have exact sequence
\begin{center}
	$ 0\longrightarrow \mathcal{O}_{G(d,n)}(i)\longrightarrow E \longrightarrow p_*F \longrightarrow 0. $
\end{center}
Then  \[c_{p^*E}(T)=c_{p^*(p_*F)}(T)(T+i(X_{1}+X_{2}+\cdots+X_{d}))=\overset{d}{\underset{i=1}{\prod}}(T+X_{i})(T+(X_{1}+X_{2}+\cdots+X_{d})).\] Clearly, $i=1$, so we deduce that
\begin{center}
	$ 0\longrightarrow \mathcal{O}_{G(d,n)}(1)\longrightarrow E \longrightarrow p_*F  \longrightarrow 0.$
\end{center}

Obviously, $p_*F$ is a uniform vector bundle of rank $d$ over $G(d,n)$ with split type $(1,0,\cdots,0)$. By Proposition 5.4 in \cite{Guy} or Theorem 1.1 in \cite{D-F-G}, $p_*F$ is isomorphic to $H_{d}^{*}$.

Thus $ E\cong H_d^*\bigoplus \mathcal{O}_{G(d,n)}(1)$ since $Ext^1(H_d^*, \mathcal{O}_{G(d,n)}(1))=0$.
\end{proof}

The following lemma solves for case 6 in Proposition \ref{juti ploynimal when d<n-d}.
\begin{lemma}\label{S^2 case}
 There does not exist uniform vector bundle of rank $d+1$ whose Chern polynomial is
		$$ c_{p^*E}(T) =\overset{d}{\underset{i=1}{\prod}}(T+2X_{i})(T+(X_{1}+X_{2}+\cdots+X_{d})) $$ when $d\geq3$.
		
		If $d=2$ and $ c_{p^*E}(T) =\overset{2}{\underset{i=1}{\prod}}(T+2X_{i})(T+(X_{1}+X_{2})) $, then $E\cong S^{2}H_{2} $.
	
\end{lemma}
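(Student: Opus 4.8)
The plan is to run the relative HN-filtration of $p^{*}E$ over $F(d-1,d,d+1,n)$, identify its graded pieces, push them down through the projections of the standard diagram, and compare the outcome with $S^{2}H_{d}$, exactly in the spirit of Lemma \ref{(1,1,000case)}. First I would pin down the filtration. The given Chern polynomial determines the splitting type: $k=3$, $(u_{1},u_{2},u_{3})=(2,1,0)$ and $(r_{1},r_{2},r_{3})=(1,1,d-1)$ (so $u_{i}-u_{i+1}=1$ automatically). Multiplying the Chern polynomials of the graded pieces gives $c_{HN^{1}}(T)=T+2X_{d}$, $c_{HN^{2}/HN^{1}}(T)=T+(X_{1}+\dots+X_{d})$ and $c_{HN^{3}/HN^{2}}(T)=\prod_{i=1}^{d-1}(T+2X_{i})$. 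By Lemma \ref{Guy} and the induced description of $\operatorname{Pic}F(d-1,d,d+1,n)$, the first two pieces are the line bundles $HN^{1}\cong\mathscr H_{H_{d}^{*}}^{\otimes 2}$ and $HN^{2}/HN^{1}\cong p^{*}\mathcal O_{G(d,n)}(1)$, while $HN^{3}/HN^{2}=p^{*}E/HN^{2}=s^{*}E_{3}$ for a rank $d-1$ bundle $E_{3}$ on $F(d-1,d+1,n)$ with Chern roots $\{-2X_{1},\dots,-2X_{d-1}\}$.

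The decisive step, valid for every $d$, is to push $HN^{1}$ forward along $pr_{1}$, as in the proof of Lemma \ref{(1,1,000case)}. Since $c_{1}(HN^{1})=-2X_{d}$ is pulled back from $F(d-1,d,n)$ and the fibres of $pr_{1}$ lie inside the trivializing fibres of $p$, $HN^{1}$ is trivial on the fibres of $pr_{1}$, so $pr_{1*}HN^{1}$ is a line bundle with $pr_{1}^{*}pr_{1*}HN^{1}\cong HN^{1}$. Because $\mathscr H_{H_{d}^{*}}=pr_{1}^{*}\mathcal O_{F(d-1,d,n)}(-1)$ and $F(d-1,d,n)=\mathbb P(H_{d}^{*})$, this yields a sub-line-bundle $\mathcal O_{F(d-1,d,n)}(-2)\hookrightarrow q^{*}E$; applying $q_{*}$ and the projection formula, together with $q_{*}\mathcal O_{F(d-1,d,n)}(2)=S^{2}H_{d}$, converts it into a nonzero morphism $\beta\colon S^{2}H_{d}^{*}\to E$. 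Tracing through the construction, over each fibre $\mathbb P(W^{*})$ of $q$ the induced map $\beta_{[W]}\colon S^{2}W^{*}\to E_{[W]}$ is nonzero at every point of the Veronese cone $\{(\ell^{*})^{\otimes 2}:\ell\subset W\}$; as a linear subspace of $\mathbb P(S^{2}W^{*})$ disjoint from the Veronese variety must be small, this forces $\operatorname{rk}\beta_{[W]}\ge 2$ for all $W$.

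When $d=2$ this already finishes the proof. Here $S^{2}H_{2}^{*}$ and $E$ both have rank $3$, and $c_{1}(S^{2}H_{2}^{*})=3\,c_{1}(\mathcal O_{G}(1))=c_{1}(E)$, so $\det\beta\in H^{0}(G(2,n),\mathcal O_{G})$ is a constant; and it cannot be the zero constant. Indeed, if $\det\beta\equiv 0$ then $\operatorname{rk}\beta\equiv 2$, so $\ker\beta$ is a line subbundle $\mathcal O_{G}(m)\subset S^{2}H_{2}^{*}$; stability of $S^{2}H_{2}^{*}$ gives $m\le 0$, while restricting $0\to\ker\beta\to S^{2}H_{2}^{*}\to\operatorname{im}\beta\to 0$ to a general line and using $\operatorname{im}\beta\subset E$ gives $m\ge 0$; hence $m=0$, and $S^{2}H_{2}^{*}$ would carry a nowhere-vanishing section — impossible, since $H^{0}(G(2,n),S^{2}H_{2}^{*})=S^{2}V^{*}$ and every quadratic form on $V$ is isotropic on some $2$-plane ($\dim V=n\ge 4$). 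Therefore $\beta$ is an isomorphism and $E\cong S^{2}H_{2}$ (up to the twist allowed in Proposition \ref{juti ploynimal when d<n-d}).

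When $d\ge 3$ I would instead derive a contradiction: now $\operatorname{rk}S^{2}H_{d}^{*}=\binom{d+1}{2}>d+1$, so $\beta$ is far from injective, and one shows that the structures just produced cannot coexist — the subbundle $\mathcal O_{F(d-1,d,n)}(-2)\subset q^{*}E$, the line bundle quotient $HN^{2}/HN^{1}=p^{*}\mathcal O_{G}(1)$, and the rank $d-1$ tail $s^{*}E_{3}$ with Chern roots $\{-2X_{i}\}$ (a pattern absent from the classification of uniform bundles over $F(d-1,d+1,n)$) together force, after pushing the filtration forward through $p$ and comparing ranks and first Chern classes as in the earlier lemmas, an absurd numerical identity. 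The step I expect to be the real obstacle, in both cases, is exactly this last one: passing from ``$\beta$ exists and is generically controlled'' to ``$\beta$ is an isomorphism'' when $d=2$, respectively to an outright contradiction when $d\ge 3$. Both need control of $E$ away from the generic point, and the only extra input available is uniformity — the fact that $E$ restricts to the fixed bundle $\mathcal O_{\mathbb P^{1}}^{\oplus(d-1)}\oplus\mathcal O_{\mathbb P^{1}}(1)\oplus\mathcal O_{\mathbb P^{1}}(2)$ on every line, which rigidifies all the sub- and quotient bundles appearing above; playing this rigidity against the stability of $S^{2}H_{d}^{*}$ is what makes the argument go through for $d=2$ and break down, forcing non-existence, for $d\ge 3$.
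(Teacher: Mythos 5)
Your construction of $\beta$ is sound and, for $d=2$, gives a genuinely different route from the paper's. You correctly identify $HN^1\cong\mathscr{H}_{H_d^*}^{\otimes 2}$, observe it is trivial on $pr_1$-fibres so that it descends to a subbundle $\mathcal{O}_{F(d-1,d,n)}(-2)\hookrightarrow q^*E$, and push forward to get $\beta\colon S^2H_d^*\to E$ which is nonzero on every square $\lambda^{2}$; the determinant-plus-Veronese-plus-stability argument then closes the $d=2$ case. The paper instead never leaves the filtration: it shows $\dim\mathrm{Ext}^1(p^*\mathcal{O}_{G(2,n)}(1),\mathscr{H}_{H_2^*}^{\otimes 2})=1$, proves the extension defining $HN^2$ is non-split, and then — via a connecting-map computation on $q$-fibres and rational connectedness — that $pr_{1*}HN^2$ restricts to $\mathcal{O}_{\mathbb{P}^1}(-1)^{\oplus 2}$ on every fibre, whence $p_*HN^2=R^1p_*HN^2=0$ and $E\cong S^2H_2(2)$ drops out of the pushed-forward sequence. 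Your version is shorter and avoids the delicate non-splitting analysis, at the price of two debts you should settle: you need $S^2H_2^*$ to be \emph{stable} (not merely semistable) to exclude $m=1$ — true because it is an irreducible homogeneous bundle, but it must be said — and you should justify $H^0(G(2,n),S^2H_2^*)=S^2V^*$ (Borel--Weil).

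The genuine gap is the non-existence statement for $d\ge 3$, which is half the lemma and which you do not actually prove: the final paragraph announces that the structures ``cannot coexist'' and ``force an absurd numerical identity'' without producing one. The rank estimate your Veronese argument yields there is only $\operatorname{rk}\beta_{[W]}\ge d$ (a linear space of projective dimension $\ge\binom{d+1}{2}-d$ must meet the Veronese), which is strictly less than $\operatorname{rk}E=d+1$, so $\beta$ need not even be surjective and no contradiction follows from rank or first-Chern-class bookkeeping alone; indeed nothing in your setup distinguishes $d\ge3$ from $d=2$ except the rank of $S^2H_d^*$, and that by itself is not an obstruction. The mechanism the paper uses is different and essential: for $d\ge 3$ the fibres of $q$ are $\mathbb{P}^{d-1}$ with $d-1\ge 2$, so $R^iq_*\mathcal{O}_{F(d-1,d,n)}(-2)=0$ for all $i>0$, hence $\mathrm{Ext}^1(p^*\mathcal{O}_{G(d,n)}(1),\mathscr{H}_{H_d^*}^{\otimes 2})=0$ and $HN^2$ \emph{splits} as $\mathscr{H}_{H_d^*}^{\otimes 2}\oplus p^*\mathcal{O}_{G(d,n)}(1)$; the snake lemma then yields $0\to\mathscr{H}_{H_d^*}^{\otimes 2}\to p^*p_*F\to F\to 0$, and the Descente-Lemma produces a line subbundle $L\subset p_*F$ with $p^*L\cong\mathscr{H}_{H_d^*}^{\otimes 2}$, contradicting $p_*\mathscr{H}_{H_d^*}^{\otimes 2}=0$. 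This cohomological dichotomy (the $\mathrm{Ext}^1$ vanishes exactly when $d\ge3$) is the missing idea; without it, or a concrete substitute, your proof establishes only the $d=2$ half of the statement.
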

\begin{proof}
In this case, we have $$c_{HN^1}(T)=T+2X_{d}=c_{ \mathscr{H}_{H_{d}^{*}}^{\otimes 2}}(T)$$ and $$c_{HN^{2}/HN^{1}}(T)=T+X_{1}+X_{2}+\cdots+X_{d}=c_{p^{*}\mathcal{O}_{G(d,n)}(1)}(T).$$
Since $HN^1$, $\mathscr{H}_{H_{d}^{*}}^{\otimes 2}$, $HN^{2}/HN^{1}$ and $p^{*}\mathcal{O}_{G(d,n)}(1)$ are all vector bundle of rank $1$, we get that
$HN^1\cong  \mathscr{H}_{H_{d}^{*}}^{\otimes 2}$ and $HN^{2}/HN^{1}\cong p^{*}\mathcal{O}_{G(d,n)}(1)$.
 The HN-filtration gives
\begin{equation}\label{HN1}
0\longrightarrow   \mathscr{H}_{H_{d}^{*}}^{\otimes 2} \longrightarrow HN^2 \longrightarrow p^{*}\mathcal{O}_{G(d,n)}(1) \longrightarrow 0
\end{equation}
and 	
\begin{equation}\label{HN2}
0\longrightarrow  HN^2 \longrightarrow p^*E \longrightarrow F\longrightarrow 0
\end{equation}
 for some quotient bundle $F$.

When $d\ge 3$, viewing $F(d-1,d,n)$ as a projective bundle over $G(d,n)$, we get $$R^{i}q_{*}(\mathcal{O}_{F(d-1,d,n)}(-2))=0$$ for $i>0$ (see \cite{Har} Chapter III, Exercise 8.4 (a)).
Since $\mathscr{H}_{H_{d}^{*}}^{\otimes 2}\otimes p^{*}\mathcal{O}_{G(d,n)}(-1)$ restricts to a fiber of $pr_1$ is trivial for any $d\ge 2$, we get
\begin{equation}\label{Ri=0}
R^{i}pr_{1*}(\mathscr{H}_{H_{d}^{*}}^{\otimes 2}\otimes p^{*}\mathcal{O}_{G(d,n)}(-1))=0
\end{equation} for $i>0$.
So
\begin{eqnarray*}
&&H^{1}(F(d-1,d,d+1,n),\mathscr{H}_{H_{d}^{*}}^{\otimes 2}\otimes p^{*}\mathcal{O}_{G(d,n)}(-1))\\
&\cong &H^1(F(d-1,d,n),pr_{1*}(\mathscr{H}_{H_{d}^{*}}^{\otimes 2}\otimes p^{*}\mathcal{O}_{G(d,n)}(-1)))\\
&=&H^1(F(d-1,d,n),\mathcal{O}_{F(d-1,d,n)}(-2)\otimes q^{*}\mathcal{O}_{G(d,n)}(-1))\\
&=&H^1(G(d,n),q_{*}\mathcal{O}_{F(d-1,d,n)}(-2)\otimes \mathcal{O}_{G(d,n)}(-1))\\
&=&H^1(G(d,n),0)\\
&=&0.
\end{eqnarray*}
Then $Ext^{1}(p^{*}\mathcal{O}_{G(d,n)}(1), \mathscr{H}_{H_{d}^{*}}^{\otimes 2})=0$.
So, from (\ref{HN1}), \[ HN^2\simeq \mathscr{H}_{H_{d}^{*}}^{\otimes 2}\oplus p^{*}\mathcal{O}_{G(d,n)}(1).\]

Applying $p_*$ to (\ref{HN2}) and noticing that $p_*(\mathscr{H}_{H_{d}^{*}}^{\otimes 2}))=q_*(\mathcal{O}_{F(d-1,d,n)}(-2))=0$ and $R^1p_*(\mathscr{H}_{H_{d}^{*}}^{\otimes 2}))=0$, we have
\[0\longrightarrow \mathcal{O}_{G(d,n)}(1)  \longrightarrow E \longrightarrow p_{*}F\longrightarrow 0.\]
Moreover, over $ F(d-1,d,d+1,n) $, we have the commutative diagram

\begin{center}
	$\xymatrix{
		0\ar[r] & p^{*}\mathcal{O}_{G(d,n)}(1) \ar[r] \ar[d] & p^{*}E \ar[d]^{id} \ar[r] & p^{*}p_{*}F\ar[r]\ar[d]&0\\
		0\ar[r] & HN^2 \ar[r]&p^{*}E \ar[r]&F\ar[r]&0.\\
	}$	
\end{center}

The snake lemma gives the  exact sequence

\begin{center}
	$ 0\longrightarrow  \mathscr{H}_{H_{d}^{*}}^{\otimes 2} \longrightarrow p^{*}p_{*}F \longrightarrow F \longrightarrow 0 $.
\end{center}

Restricting to the fiber of $s$ at some point $l$ in $F(d-1,d+1,n)$, which is isomorphic to $\mathbb{P}^1$, by Proposition 2.3 in \cite{Guy}, we have
\[T_{F(d-1,d,d+1,n)/G(d,n)}|_{s^{-1}(l)}=\mathcal{O}_{s^{-1}(l)}(-1)^{\oplus m}.
\]
Moreover, \[\mathscr{H}_{H_{d}^{*}}|_{s^{-1}(l)}=\mathcal{O}_{s^{-1}(l)}(1)\] and
\[F|_{s^{-1}(l)}\cong \mathcal{O}_{s^{-1}(l)}^{\oplus d-1}\] since $F=p^{*}E/HN^2$.
So
\[Hom(T_{F(d-1,d,d+1,n)/G(d,n)},\mathscr{H}om(\mathscr{H}_{H_{d}^{*}}^{\otimes 2},F))|_{s^{-1}(l)}=0\]
\[\Rightarrow Hom(T_{F(d-1,d,d+1,n)/G(d,n)},\mathscr{H}om(\mathscr{H}_{H_{d}^{*}}^{\otimes 2},F))=0.\]

Thus, by Descente-Lemma (Lemma 2.1.2 in \cite{O-S-S}), there is a line subbundle $L$ of $p_{*}F$ such that $p^{*}L $ isomorphic to $\mathscr{H}_{H_{d}^{*}}^{\otimes 2}$, which is impossible because $p_*p^{*}L\cong L$ while $p_*\mathscr{H}_{H_{d}^{*}}^{\otimes 2}=0$.

When $d=2$, from the HN-filtration we have following two exact sequences
\begin{equation}\label{HN1d2}
 0\longrightarrow  \mathscr{H}_{H_{2}^{*}}^{\otimes 2} \longrightarrow HN^2 \longrightarrow p^{*}\mathcal{O}_{G(2,n)}(1) \longrightarrow 0
\end{equation}
and
\begin{equation}\label{HN2d2}
 0\longrightarrow  HN^2 \longrightarrow p^*E \longrightarrow F \longrightarrow 0.
\end{equation}	
Now $F$ is a line bundle, so comparing the Chern polynomial we can get
\[F=\mathscr{H}_{H_{2}^*}^{\otimes -2}\otimes p^{*}\mathcal{O}_{G(2,n)}(2).\]
Next, we want to consider the extension of (\ref{HN1d2}) by calculating $H^{1}(F(1,2,3,n), \mathscr{H}_{H_{2}^{*}}^{\otimes 2}\otimes p^{*}\mathcal{O}_{G(2,n)}(-1))$.

By Leray spectral sequence, we have exact sequence
\begin{multline*}
H^{1}(G(2,n),p_{*}(\mathscr{H}_{H_{2}^{*}}^{\otimes 2}\otimes p^{*}\mathcal{O}_{G(2,n)}(-1)))\rightarrow H^{1}(F(1,2,3,n), \mathscr{H}_{H_{2}^{*}}^{\otimes 2}\otimes p^{*}\mathcal{O}_{G(2,n)}(-1))\\
\rightarrow H^{0}(G(2,n),R^{1}p_{*}(\mathscr{H}_{H_{2}^{*}}^{\otimes 2}\otimes p^{*}\mathcal{O}_{G(2,n)}(-1)))\rightarrow H^{2}(G(2,n),p_{*}(\mathscr{H}_{H_{2}^{*}}^{\otimes 2}\otimes p^{*}\mathcal{O}_{G(2,n)}(-1))).
\end{multline*}
Since $p_{*}\mathscr{H}_{H_{2}^{*}}^{\otimes 2}=0$ and $R^1p_{*}\mathscr{H}_{H_{2}^{*}}^{\otimes 2}=R^1q_*(\mathcal{O}_{\mathbb{P}(H_2)}(-2))=q_*\mathcal{O}_{\mathbb{P}(H_2)}\otimes \wedge^2 H_2^*=\mathcal{O}_{G(2,n)}(1)$ (see \cite{Har} Chapter III, Exercise 8.4 (c)), we have
\begin{eqnarray*}
&&H^{1}(F(1,2,3,n), \mathscr{H}_{H_{2}^{*}}^{\otimes 2}\otimes p^{*}\mathcal{O}_{G(2,n)}(-1))\\
&=&H^{0}(G(2,n),R^{1}p_{*}(\mathscr{H}_{H_{2}^{*}}^{\otimes 2}\otimes p^{*}\mathcal{O}_{G(2,n)}(-1)))\\
&=&H^{0}(G(2,n),R^{1}p_{*}\mathscr{H}_{H_{2}^{*}}^{\otimes 2}\otimes \mathcal{O}_{G(2,n)}(-1))\\
&=&H^{0}(G(2,n),\mathcal{O}_{G(2,n)}).
\end{eqnarray*}

Thus, \[\text{dim} Ext^1(p^{*}\mathcal{O}_{G(2,n)}(-1)), \mathscr{H}_{H_{2}^{*}}^{\otimes 2})=h^{0}(G(d,n),\mathcal{O}_{G(2,n)})=1.\]

From the same argument above, $HN^{2}$ is not the direct sum of $p^{*}\mathcal{O}_{G(2,n)}(1))$ and $\mathscr{H}_{H_{2}^{*}}^{\otimes 2}$.
So the extension (\ref{HN1d2}) is nontrivial.

Applying $p_{*}$ to (\ref{HN2d2}), we have
\begin{equation}\label{S2H}
0\longrightarrow p_{*}HN^{2}\longrightarrow E\longrightarrow S^{2}H_{2}(2)\longrightarrow R^{1}p_{*}HN^{2}\longrightarrow 0.
\end{equation}

Applying $pr_{1*}$ to (\ref{HN1d2}) and setting $M=pr_{1*}HN^{2}$, we have
\begin{equation}\label{M}
0\longrightarrow \mathcal{O}_{\mathbb{P}(H_2)}(-2)\longrightarrow M \longrightarrow q^{*}\mathcal{O}_{G(2,n)}(1)\longrightarrow 0.
\end{equation}
If the above exact sequence (\ref{M}) splits, then
\[ pr^*_1pr_{1*}HN^2=pr^*_1M\simeq \mathscr{H}_{H_{d}^{*}}^{\otimes 2}\oplus p^{*}\mathcal{O}_{G(d,n)}(1).\]
Since the canonical morphism from $pr^*_1(pr_{1*}HN^2)$ to $HN^2$ restrict to any fiber of $pr_1$ is an isomorphism as two trivial bundles of rank $2$, we get $pr^*_1(pr_{1*}HN^2)\cong HN^2$.
So \[HN^2\simeq \mathscr{H}_{H_{d}^{*}}^{\otimes 2}\oplus p^{*}\mathcal{O}_{G(d,n)}(1),\] which is a contradiction.
So (\ref{M}) is a nontrivial extension.

Next, we are going to prove $q_{*}M=p_{*}HN^2=0$.

When $d=2$, $q^{-1}(x)$ is a line for some point $x\in G(2,n)$. We have standard exact sequence
\[0\longrightarrow \mathcal{I}_{q^{-1}(x)} \longrightarrow \mathcal{O}_{\mathbb{P}(H_2)} \stackrel{h_{x}}{\longrightarrow} \mathcal{O}_{q^{-1}(x)}\longrightarrow0.\]

Tensoring with $\mathcal{O}_{\mathbb{P}(H_2)}(-2)\otimes q^{*}\mathcal{O}_{G(d,n)}(-1)$, we get the short exact sequence
\begin{multline*}
0\longrightarrow \mathcal{I}_{q^{-1}(x)}\otimes \mathcal{O}_{\mathbb{P}(H_2)}(-2)\otimes q^{*}\mathcal{O}_{G(2,n)}(-1) \longrightarrow\\ \mathcal{O}_{\mathbb{P}(H_2)}(-2)\otimes q^{*}\mathcal{O}_{G(2,n)}(-1) \stackrel{h_{x}}{\longrightarrow} \mathcal{O}_{q^{-1}(x)}(-2)\longrightarrow0,
\end{multline*}
which induces the long exact sequence
\begin{multline*}
0=H^0(q^{-1}(x),\mathcal{O}_{q^{-1}(x)}(-2))\longrightarrow H^1(\mathbb{P}(H_2),\mathcal{I}_{q^{-1}(x)}\otimes \mathcal{O}_{\mathbb{P}(H_2)}(-2)\otimes q^{*}\mathcal{O}_{G(2,n)}(-1)) \longrightarrow\\
 H^1(\mathbb{P}(H_2),\mathcal{O}_{\mathbb{P}(H_2)}(-2)\otimes q^{*}\mathcal{O}_{G(2,n)}(-1))\stackrel{f_x}{\longrightarrow} H^1(q^{-1}(x),\mathcal{O}_{q^{-1}(x)}(-2)).
\end{multline*}

Over $F(1,2,n)=\mathbb{P}(H_2)$, we have the communitative diagram

\begin{displaymath}
	\xymatrix{
		0\ar[r] & \mathcal{O}_{\mathbb{P}(H_2)}(-2)\otimes q^{*}\mathcal{O}_{G(2,n)}(-1) \ar[r] \ar[d] &M\otimes q^{*}\mathcal{O}_{G(2,n)}(-1) \ar[d]^{id} \ar[r] & O_{\mathbb{P}(H_2)}\ar[r]\ar[d]&0\\
		0\ar[r] & \mathcal{O}_{q^{-1}(x)}(-2) \ar[r]&M|_{q^{-1}(x)}\ar[r]&\mathcal{O}_{q^{-1}(x)}\ar[r]&0,
	}
\end{displaymath}
which induces the commutative diagram
\begin{displaymath}
\xymatrix{
H^0(\mathbb{P}(H_2),\mathcal{O}_{\mathbb{P}(H_2)})\ar[r]^(0.35){\delta_M}\ar[d]^{h'_x}&H^1(\mathbb{P}(H_2),\mathcal{O}_{\mathbb{P}(H_2)}(-2)\otimes q^{*}\mathcal{O}_{G(2,n)}(-1))\ar[d]^{f_x}\\		H^0(q^{-1}(x),\mathcal{O}_{q^{-1}(x)})\ar[r]^(0.45){\delta_{M|{q^{-1}(x)}}}&H^1(q^{-1}(x),\mathcal{O}_{q^{-1}(x)}(-2)).
	}
\end{displaymath}
Then
\[f_{x}\circ {\delta_M(1)}=\delta_{M|{q^{-1}(x)}}\circ h'_x(1)\]
and
	\[h'_x(1)=1, ~\text{and}~ \delta_M(1)=t\neq 0\]
since (\ref{M}) is the nontrivial extension.

If (\ref{M}) restricting to the fiber $q^{-1}(x)$ is a trivial extension, then $\delta_{M|{q^{-1}(x)}}(1)=f_x(t)=0$.
So \[h^1(\mathbb{P}(H_2),\mathcal{I}_{q^{-1}(x)}\otimes \mathcal{O}_{\mathbb{P}(H_2)}(-2)\otimes q^{*}\mathcal{O}_{G(2,n)}(-1))=h^1(\mathbb{P}(H_2),\mathcal{O}_{\mathbb{P}(H_2)}(-2)\otimes q^{*}\mathcal{O}_{G(2,n)}(-1)).\]

For any other point $y\in G(2,n)$, since the Grassmannian is rationally connected, we have $\mathcal{I}_{q^{-1}(x)}\cong \mathcal{I}_{q^{-1}(y)}$. So
\[h^1(\mathbb{P}(H_2),\mathcal{I}_{q^{-1}(y)}\otimes \mathcal{O}_{\mathbb{P}(H_2)}(-2)\otimes q^{*}\mathcal{O}_{G(2,n)}(-1))=h^1(\mathbb{P}(H_2),\mathcal{O}_{\mathbb{P}(H_2)}(-2)\otimes q^{*}\mathcal{O}_{G(2,n)}(-1)),\] for all $y \in G(2,n),$  i.e. $f_y=0$.
It follows that
\[M|_{q^{-1}(x)}\cong \mathcal{O}_{\mathbb{P}^1}(-2)\oplus \mathcal{O}_{\mathbb{P}^1},~\forall x \in G(2,n).\]
Thus $q_{*}M$ is a subbundle of of $E$ of rank $1$.
We get the following exact sequence for some quotient bundle $K$
\[0\rightarrow q_{*}M\longrightarrow E \longrightarrow K\longrightarrow 0. \]
Comparing the Chern polynomial of $p^*E$ and $p^*q_{*}M$, we have $p^*q_{*}M \cong p^*\mathcal{O}_{G(2,n)}(1)$ and the following commutative diagram over $F(1,2,3,n)$ holds by checking on any $p$-fibers:
\begin{displaymath}
\xymatrix{
		0\ar[r] & p^{*}\mathcal{O}_{G(d,n)}(1) \ar[r] \ar[d] & p^{*}E \ar[d]^{id} \ar[r] & p^{*}K\ar[r]\ar[d]&0\\
		0\ar[r] & HN^2 \ar[r]&p^{*}E \ar[r]&F\ar[r]&0.\\
	}	
\end{displaymath}
Assume that the sequence
\[0\longrightarrow p^{*}\mathcal{O}_{G(d,n)}(1)\longrightarrow HN^2\longrightarrow D\longrightarrow0\]
is exact for some quotient bundle $D$.

Comparing the Chern polynomial of $D$ and $\mathscr{H}_{H_{d}^{*}}^{\otimes 2} $, since $c_{D}(T)=T+2X_2=c_{ \mathscr{H}_{H_{d}^{*}}^{\otimes 2} }(T)$, we have $D\cong \mathscr{H}_{H_{d}^{*}}^{\otimes 2} $.
Then the snake lemma gives the exact sequence
\[0\longrightarrow  \mathscr{H}_{H_{d}^{*}}^{\otimes 2} \longrightarrow p^{*}K \longrightarrow F \longrightarrow 0.\]

With the similar argument above, there is a rank $1$ vector subbundle $L$ of $K$ such that $p^{*}L $ isomorphic to $\mathscr{H}_{H_{d}^{*}}^{\otimes 2}$, which is impossible because $p_*p^{*}L\cong L$ while $p_*\mathscr{H}_{H_{d}^{*}}^{\otimes 2}=0$.
Therefore,  $f_x\neq 0$, i.e $\delta_{M|{q^{-1}(x)}}\neq 0$, which means that $M|_{q^{-1}(x)}\cong \mathcal{O}_{\mathbb{P}^1}(-1)\oplus \mathcal{O}_{\mathbb{P}^1}(-1)$ for all $q$ fibers.

Since $p=q\circ pr_{1*}$ and $M=pr_{1*}HN^{2}$ has split type $(-1,-1)$ at each fiber of $q$ , we have $p_{*}HN^{2}=R^{1}p_{*}HN^{2}=0$. Thus $E\cong S^{2}H_{2}(2)$ from (\ref{S2H}).
\end{proof}

\begin{proposition}
The uniform vector bundles correspond Proposition \ref{juti ploynimal when d<n-d} are one of the following:
\begin{center}
	$H_d{(a)}\bigoplus \mathcal{O}_{G(d,n)}{(b)},~ H_d^*{(c)}\bigoplus \mathcal{O}_{G(d,n)}{(d)},~\overset{k}{\underset{i=1}{\oplus}} O^{\oplus r_{i}}_{G(d,n)}{(e_i)}~ \text{and} ~S^2 H_2(f)$
	\end{center}	
where $a,b,c, d, e_i, f \in \mathbb{Z}$ and $H_d$ is the tautological subbundle on  $G(d,n)$.
\end{proposition}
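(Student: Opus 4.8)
The plan is to collect the three lemmas just proved, which between them treat every one of the six Chern polynomials in Proposition~\ref{juti ploynimal when d<n-d}, and then to undo the normalization ``up to dualization and tensoring by a line bundle'' that was built into that proposition. First I would invoke Proposition~\ref{juti ploynimal when d<n-d}: after replacing $E$ by its dual $E^{*}$ and/or by a twist $E\otimes\mathcal{O}_{G(d,n)}(m)$ if necessary, the Chern polynomial of $p^{*}E$ is exactly one of the polynomials $1$--$6$ listed there. The lemma above covering cases $1$, $2$, $3$, $5$ then shows that the bundle is $\bigoplus_{i}\mathcal{O}^{\oplus r_{i}}_{G(d,n)}(e_{i})$ or $H_{d}^{*}\oplus\mathcal{O}_{G(d,n)}(j)$; Lemma~\ref{(1,1,000case)} shows that case $4$ gives $H_{d}^{*}\oplus\mathcal{O}_{G(d,n)}(1)$; and Lemma~\ref{S^2 case} shows that case $6$ admits no bundle when $d\ge 3$ and forces $E\cong S^{2}H_{2}(2)$ when $d=2$. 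Consequently, after a dualization and a twist, $E$ is one of $\bigoplus_{i}\mathcal{O}^{\oplus r_{i}}_{G(d,n)}(e_{i})$, $H_{d}^{*}\oplus\mathcal{O}_{G(d,n)}(j)$ or (only if $d=2$) $S^{2}H_{2}$.

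Next I would verify that the list in the statement is closed under dualization and under tensoring by a line bundle, so that reversing those two operations leaves $E$ inside the list. Tensoring by $\mathcal{O}_{G(d,n)}(m)$ sends each of $\bigoplus_{i}\mathcal{O}^{\oplus r_{i}}_{G(d,n)}(e_{i})$, $H_{d}(a)\oplus\mathcal{O}_{G(d,n)}(b)$, $H_{d}^{*}(c)\oplus\mathcal{O}_{G(d,n)}(d)$ and $S^{2}H_{2}(f)$ to a bundle of the same shape with shifted twists. For dualization, $\bigl(\bigoplus_{i}\mathcal{O}^{\oplus r_{i}}_{G(d,n)}(e_{i})\bigr)^{*}=\bigoplus_{i}\mathcal{O}^{\oplus r_{i}}_{G(d,n)}(-e_{i})$ is again of that form, while $\bigl(H_{d}^{*}(c)\oplus\mathcal{O}_{G(d,n)}(d)\bigr)^{*}=H_{d}(-c)\oplus\mathcal{O}_{G(d,n)}(-d)$ is exactly of the form $H_{d}(a)\oplus\mathcal{O}_{G(d,n)}(b)$. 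The only case requiring attention is the new summand $S^{2}H_{2}$: on $G(2,n)$ we have $\det H_{2}=\mathcal{O}_{G(2,n)}(-1)$, hence $H_{2}^{*}\cong H_{2}\otimes(\det H_{2})^{-1}\cong H_{2}(1)$, and therefore $\bigl(S^{2}H_{2}(f)\bigr)^{*}\cong S^{2}H_{2}^{*}(-f)\cong S^{2}H_{2}(2-f)$ is once more of the form $S^{2}H_{2}(f')$. Hence each of the normal forms found in the first step, after being dualized and twisted back, lies in $\{\,H_{d}(a)\oplus\mathcal{O}_{G(d,n)}(b),\,H_{d}^{*}(c)\oplus\mathcal{O}_{G(d,n)}(d),\,\bigoplus_{i}\mathcal{O}^{\oplus r_{i}}_{G(d,n)}(e_{i}),\,S^{2}H_{2}(f)\,\}$, which is the assertion.

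Since the genuine geometric work --- recognising the relative HN-filtration, splitting (or, for $d=2$, not splitting) the resulting extensions, and ruling out case $6$ for $d\ge 3$ --- has already been carried out in the three preceding lemmas, the only content left is the bookkeeping sketched above. The step I would be most careful about is the behaviour of $S^{2}H_{2}$ under duality, i.e.\ the identity $H_{2}^{*}\cong H_{2}(1)$ coming from $\det H_{2}=\mathcal{O}_{G(2,n)}(-1)$: this is precisely what guarantees that allowing a dualization in Proposition~\ref{juti ploynimal when d<n-d} does not create a genuinely new family $S^{2}H_{2}^{*}$ outside the list already recorded.
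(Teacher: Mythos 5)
Your proposal is correct and follows essentially the same route as the paper, which states this proposition as a direct summary of the three preceding lemmas covering cases 1--6 and leaves the assembly implicit. Your explicit check that the list is closed under dualization and twisting --- in particular the identity $H_{2}^{*}\cong H_{2}(1)$ coming from $\det H_{2}=\mathcal{O}_{G(2,n)}(-1)$, so that $(S^{2}H_{2}(f))^{*}\cong S^{2}H_{2}(2-f)$ --- is exactly the bookkeeping the paper omits, and it is carried out correctly.
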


\section{Uniform $(d+1)$-bundle over $G(d,n)$ when $d=n-d-1$ and $d=n-d$}\label{section n-d}
In characteristic $0$, the vector bundle $E$ with split form $(k;u_1,\cdots,u_k;r_1,\cdots,r_k)$ ($u_1>u_2>\cdots>u_k$) is an extension of two uniform vector bundles if $u_{i-1}-u_i\geq 2$. So we may assume $u_i$s' are consecutive in this section.

\subsection{Uniform $(d+1)$-bundle over $G(d,n)$ when $d=n-d-1$}
 When the uniform vector bundle $E$ is rank of $d+1$, under the assumption $d+1=n-d$, by Theorem \ref{Chow}, the Chern polynomial of $p^{*}E$ is
\begin{center}
	$c_{p^{*}E}(T)=\overset{k}{\underset{i=1}{\prod}}c_{s^{*}(E_i)\otimes \mathcal{O}_{s}(u_i)}(T)+a\sum_{n-d}(X_1,\cdots,X_{d+1}), $
\end{center}
where $ \sum_{n-d}(X_1,\cdots,X_{d+1})=\underset{a_1+a_2+\cdots+a_{d+1}=n-d}{\sum}X_1^{a_1}X_2^{a_2}\cdots X_{d+1}^{a_{d+1}} $.

With the help of \cite{E-H-S}, only the following cases may happen:
\begin{itemize}
\item[(A)] $a=0$;
\item[(B)] $k=1$;
\item[(C)] $k=2$.
\end{itemize}

When $a=0$, classifying of uniform $d+1$-bundle has been solved by same arguments in the above section. For cases (B) and (C),
the uniform bundle $E$ is direct sum of line bundles, $Q_{n-d}(s)$ or $Q_{n-d}^*(t)$, where $s, t\in \mathbb{Z}$ (see \cite{Guy} Proposition 5.4).

\subsection{Chern polynomial of $p^{*}E$ when $d=n-d$}
So we only need to consider $d=n-d$. Now, we suppose that the Chern polynomial of $p^{*}E$ is
\begin{multline*}
	c_{p^{*}E}(T)=\overset{k}{\underset{i=1}{\prod}}c_{s^{*}(E_i)\otimes \mathcal{O}_{s}(u_i)}(T)+(aT+b_{1}X_{1}+\cdots\\+b_{d-1}X_{d-1}+cX_{d}+eX_{d+1})\sum_{n-d}(X_1,\cdots,X_{d+1})+f\sum_{n-d+1}(X_1,\cdots,X_{d+1}).
\end{multline*}
Since $ c_{p^{*}E}(T)$, $c_{s^{*}(E_i)\otimes \mathcal{O}_{s}(u_i)}(T)$ and $\sum_{m}(X_1,\cdots,X_{d+1})~ (m=n-d, n-d+1)$ are symmetric about $X_1,\cdots, X_{d-1}$ , we see that $b_1=b_2=\cdots=b_{d-1}$. So we rewrite the Chern polynomial of $p^{*}E$ as
\begin{multline*}
	c_{p^{*}E}(T)=\overset{k}{\underset{i=1}{\prod}}c_{s^{*}(E_i)\otimes \mathcal{O}_{s}(u_i)}(T)+(aT+b(X_{1}+\cdots\\+X_{d-1})+cX_{d}+eX_{d+1})\sum_{n-d}(X_1,\cdots,X_{d+1})+f\sum_{n-d+1}(X_1,\cdots,X_{d+1}).
\end{multline*}

Now we study the behavior of general polynomial
\begin{multline*}
(\mathscr{E}): ~E(T;X_1,X_2,\cdots X_d)=\overset{k}{\underset{i=1}{\prod}}S_i(T+u_i X_d;X_1,X_2,\cdots X_{d-1};X_d,X_{d+1})+\\(aT+b(X_{1}+\cdots+X_{d-1})+cX_{d}+eX_{d+1})\sum_{n-d}(X_1,\cdots,X_{d+1})+f\sum_{n-d+1}(X_1,\cdots,X_{d+1}),
\end{multline*}
where $E(T;X_1,X_2,\cdots X_d)$ is a homogeneous polynomial symmetric about $X_1,X_2,\cdots X_d$ and
$S_i(T;X_1,X_2,\cdots X_{d-1};X_d,X_{d+1})$ is a homogeneous polynomial of degree $r_i$ and symmetric about $X_1,X_2,\cdots X_{d-1} $ and $ X_d,X_{d+1}$ respectively.

Replacing $T$ by $T+u_{j}(X_{1}+\cdots+X_{d})$, we have
\begin{multline*}
 (\mathscr{E}_j):~ E^{j}(T;X_1,X_2,\cdots X_d)=\overset{k}{\underset{i=1}{\prod}}S_i^{j}(T+(u_{i}-u_{j})X_d;X_1,X_2,\cdots X_{d-1};X_d,X_{d+1})+\\(aT+b^{j}(X_{1}+\cdots+X_{d-1})+c^{j}X_{d}+eX_{d+1})\sum_{n-d}(X_1,\cdots,X_{d+1})+f\sum_{n-d+1}(X_1,\cdots,X_{d+1}).	
\end{multline*}

\textbf{Case $k=1$:}
In this case $r_1=d+1$. Without loss of generality, we can assume $u_1=0$. We use the result in \cite{D-F-G} (Proposition 3.1) to get that $E$ a is trivial bundle.

\textbf{Case $k\geq 2$:}
\begin{proposition}
	$e+f=0$.
\end{proposition}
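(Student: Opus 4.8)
The plan is to extract the identity $e+f=0$ purely from the polynomial identity $(\mathscr{E})$ by evaluating it at a cleverly chosen specialization of the variables $X_1,\dots,X_{d+1}$ that kills the complicated product $\prod_{i=1}^{k} S_i$ while leaving a nonzero contribution only from the $\sum_{n-d}$ and $\sum_{n-d+1}$ terms. Concretely, since $d = n-d$, the symmetric sums $\sum_{n-d}(X_1,\dots,X_{d+1})$ and $\sum_{n-d+1}(X_1,\dots,X_{d+1})$ are the generators of the relevant piece of the ideal $I$, and the whole point is that $c_{p^*E}(T)$, being of degree $d+1 = n-d+1$, can only differ from the product $\prod_i c_{s^*(E_i)\otimes \mathcal{O}_s(u_i)}(T)$ by a multiple of these two sums. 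First I would set up the substitution: put $X_d = \zeta$ and $X_{d+1} = -\zeta$ for a formal parameter $\zeta$ (or, in characteristic $0$, a transcendental), and specialize $X_1 = \dots = X_{d-1} = 0$; then track which monomials in each of the three summands survive.

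The key observation I would exploit is the following. Under $X_1 = \dots = X_{d-1} = 0$, a monomial $X_1^{a_1}\cdots X_{d+1}^{a_{d+1}}$ in $\sum_m(X_1,\dots,X_{d+1})$ survives only if $a_1 = \dots = a_{d-1} = 0$, i.e. only the terms $X_d^{a_d} X_{d+1}^{a_{d+1}}$ with $a_d + a_{d+1} = m$ remain. After further setting $X_{d+1} = -X_d$, this becomes $\sum_{a_d=0}^{m} (-1)^{m-a_d} X_d^{m} = \big(\sum_{j=0}^m (-1)^j\big) X_d^m$, which is $X_d^m$ if $m$ is even and $0$ if $m$ is odd. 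Since $m = n-d$ and $m = n-d+1$ have opposite parities, exactly one of $\sum_{n-d}$, $\sum_{n-d+1}$ survives this specialization — this is slightly too crude, so instead I would keep $X_1,\dots,X_{d-1}$ generic and only impose $X_{d+1} = -X_d$, then compare coefficients of a well-chosen monomial (for instance the coefficient of $X_1 X_2 \cdots X_{d-1} X_d^{\,n-2d+2}$, adjusting exponents so that it appears in $\sum_{n-d+1}$ but the analysis of $\sum_{n-d}$ is controlled). The cleanest route is: substitute $X_{d+1} = -X_d$ into $(\mathscr{E})$ and observe that $c_{\mathscr{H}_{Q_{n-d}}}(T) = T - X_{d+1}$ becomes $T + X_d = c_{\mathscr{H}_{H_d^*}}(T)$, forcing a coincidence among factors of the $S_i$; one then reads off that the product term $\prod_i S_i$ contributes nothing new to the coefficient of the top $X_d$-power in the degree-zero part (in $T$), so the remaining constraint is $(c-e)\cdot(\text{something}) + f\cdot(\text{something}) = 0$ together with the symmetry already forcing $b_1 = \dots = b_{d-1}$, and unwinding this gives $e + f = 0$.

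The step I expect to be the main obstacle is controlling the contribution of the product $\prod_{i=1}^k S_i(T + u_i X_d;\dots)$ after the substitution $X_{d+1} = -X_d$: a priori this product is an arbitrary symmetric polynomial and could contribute to every monomial, so I need to argue that after specialization it lies in the subring where the relevant coefficient is forced — this is where I would invoke the structure from $(\mathscr{E}_j)$ (replacing $T$ by $T + u_j(X_1+\dots+X_d)$) and the already-established fact $k \geq 2$ with consecutive $u_i$. Since $k \geq 2$, at least two distinct shifts $u_i - u_j$ occur among the $S_i^j$, and by Lemma \ref{ES} applied to the $a=0$ truncation, each $S_i$ is itself a product of linear forms $T + \sum \lambda_t X_t$; the substitution $X_{d+1} = -X_d$ then identifies certain pairs of these linear forms, and a dimension/degree count (the total degree of $\prod S_i$ in $T$ is exactly $d+1$, matching $c_{p^*E}$) pins down the leftover. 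Once that bookkeeping is done, equating the coefficient of the monomial $X_1 X_2 \cdots X_{d-1} X_d^{\,2}$ (degree $d+1$, with $T^0$) on both sides of $(\mathscr{E})$ after $X_{d+1}=-X_d$ yields a linear relation whose only nontrivial content is $e + f = 0$. I would double-check the parity arithmetic $m = n-d, n-d+1$ carefully, since the sign $(-1)^{m}$ flipping between the two sums is exactly what makes the two constants $e$ and $f$ appear with a relative sign in the final relation.
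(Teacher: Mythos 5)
There is a genuine gap here, and it sits exactly where you flagged your ``main obstacle.'' First note what the proposition is really equivalent to: comparing the coefficient of $X_{d+1}^{d+1}$ on both sides of $(\mathscr{E})$ (the left side has no $X_{d+1}$ at all) gives
\[
\prod_{i=1}^{k}a_i \;+\; e \;+\; f \;=\;0,
\qquad a_i:=\text{coefficient of }X_{d+1}^{r_i}\text{ in }S_i,
\]
so proving $e+f=0$ is the same as ruling out the ``conspiracy'' in which every $a_i$ is nonzero and $\prod_i a_i=-(e+f)$. Your substitution $X_{d+1}=-X_d$ cannot see this: as you yourself compute, with $X_1=\cdots=X_{d-1}=0$ it annihilates exactly one of $\sum_{n-d}$, $\sum_{n-d+1}$ by parity, so it can never produce the combination $e+f$; and the proposed repair (generic $X_1,\dots,X_{d-1}$, compare one monomial) leaves the contribution of $\prod_i S_i$ to that monomial completely uncontrolled. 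The step you use to control it is also not available: Lemma \ref{ES} (Guyot's factorization into linear forms $T+\sum\lambda_tX_t$) applies only when $\prod_iS_i$ equals a symmetric polynomial on the nose, i.e.\ when the correction terms vanish --- which is the very hypothesis in question. Indeed, in the cases that actually occur (Propositions \ref{B} and \ref{C}) the $S_i$ are of the form $\sum_{d-1}(T+\cdots,\beta X_1,\dots,\beta X_{d+1})$ and are \emph{not} products of linear forms, so the ``bookkeeping'' you defer to would fail.

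The paper's route is quite different and reflects how hard the excluded case is: it sets $X_1=\cdots=X_{d-1}=0$ to reduce $(\mathscr{E}_j)$ to a two--variable identity of exactly the shape studied for uniform $(n+1)$-bundles on $\mathbb{P}^n$, and then imports the case analysis of Ballico (Lemma~1 and the arguments after Proposition~1 of \cite{Bal}: some $f^i=0$ when $k\ge4$ or $d$ is odd) and of Ellia (\cite{Ell} V.3.1, V.6.3.1, V.6.4.2) to eliminate the remaining possibilities $k=3$, $d$ even, $r_1=r_3$. That residual analysis --- distinguishing by the parity of $d$ and the values of $r_1$ --- is the actual content of the proposition, and your proposal does not engage with it. If you want an elementary self-contained proof you would have to reprove those results of Ballico and Ellia, not just perform a substitution.
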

\begin{proof}
Taking $X_1=X_2=\cdots=X_{d-1}=0$ in $ \mathscr{E}(j)$ and
setting $$E^{j}(T;X_d)=T\overset{\sim}{E^{j}}(T;X_d)+h^{i}X_{d+1}^{n-d+1},$$ we get
\begin{multline*}
	T\overset{\sim}{E^{j}}(T;X_d)=\overset{k}{\underset{i=1}{\prod}}S_i^{j}(T+(u_{i}-u_{j})X_d, X_d, X_{d+1})\\+(aT+c^{j}X_{d}+(e+f)X_{d+1})\sum_{n-d}(X_d,X_{d+1})+f^{i}X_{d+1}^{n-d+1}.
\end{multline*}

By the arguments after Proposition 1 in \cite{Bal} ($c$ is $e+f$ here and $n$ is $d$ here, $x_{i}$ is $f^{i}$ here), when $k\geq 4$ or $d$ is odd, there must exist an index $i$ such that $f^i=0$. By Lemma 1 in \cite{Bal}, we have $e+f=0$.
So we only need to consider $k\leq 3$ and $d$ is even.

If $e+f\neq 0$, by the arguments after Proposition 1 in \cite{Bal}, we get $k\geq 3$. So $k=3$ and $d$ is even.
When $k=3$,  by the arguments after Proposition 1 in \cite{Bal}, we have $r_1=r_3$. Since $d$ is even, if $r_1$ is odd, by [\cite{Ell} Lemma V.3.1], $e+f=0$ which is a contradiction. If $r_1\geq 4$, by [\cite{Ell} V.6.3.1], $e+f=0$, a contradiction. When $r_1=2$, by [\cite{Ell} V.6.4.2 case(2)], $e+f=0$, also a contradiction.
\end{proof}

%
%
%
%
Comparing the degree of $X_{d+1}$ in both sides of $\mathscr{E}$, there must exist an index $j_0$ such that the coefficient of $X_{d+1}^{r_{j_0}}$ in $S_{j_0}(T+u_{j_0} X_d;X_1,X_2,\cdots X_{d-1};X_d,X_{d+1})$ is zero, i.e,  the coefficient of $X_{d+1}^{r_{j_0}}$ in $S_{j_0}^{j_0}(T;X_1,X_2,\cdots X_{d-1};X_d,X_{d+1})$ is zero.

\begin{proposition}\label{form of Si}
	If $a=0$ in the equation $ \mathscr{E}(j_0) $, then $b^{j_0}=c^{j_0}=0$.
\end{proposition}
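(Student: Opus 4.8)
The plan is to extract the coefficient of $X_{d+1}^{d}$ from the two sides of the polynomial identity $\mathscr{E}_{j_0}$, using the hypothesis $a=0$ together with the relation $e+f=0$ proved above (recall that $n-d=d$ throughout this subsection). Two preliminary observations make this efficient. First, $E^{j_0}(T;X_1,\dots,X_d)$ is the Chern polynomial of a bundle pulled back from $G(d,n)$, so it is a polynomial in $T$ and the symmetric functions of $X_1,\dots,X_d$ only; in particular it contains no $X_{d+1}$, and its coefficient of $X_{d+1}^{d}$ is $0$. Second, expanding the complete symmetric polynomials by their last variable, $\sum_{m}(X_1,\dots,X_{d+1})=\sum_{i=0}^{m}X_{d+1}^{\,i}\,\sum_{m-i}(X_1,\dots,X_d)$, and using $f=-e$, one checks the identity $eX_{d+1}\sum_{n-d}(X_1,\dots,X_{d+1})+f\sum_{n-d+1}(X_1,\dots,X_{d+1})=-e\sum_{n-d+1}(X_1,\dots,X_d)$, whose right-hand side again contains no $X_{d+1}$. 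Since the coefficient of $X_{d+1}^{d}$ in $\sum_{n-d}(X_1,\dots,X_{d+1})$ is $1$, comparing coefficients of $X_{d+1}^{d}$ in $\mathscr{E}_{j_0}$ (with $a=0$) gives
\[
P_d+\bigl(b^{j_0}(X_1+\cdots+X_{d-1})+c^{j_0}X_d\bigr)=0,
\]
where $P_d$ denotes the coefficient of $X_{d+1}^{d}$ in $\prod_{i=1}^{k}S_i^{j_0}(T+(u_i-u_{j_0})X_d;X_1,\dots,X_{d-1};X_d,X_{d+1})$.

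Next I would pin down $P_d$. Each $S_i^{j_0}$ is homogeneous of degree $r_i$ with $\sum_i r_i=d+1$, so a monomial contributing $X_{d+1}^{d}$ to the product is obtained by selecting $X_{d+1}^{m_i}$ from the $i$-th factor with $\sum_i m_i=d$ and $0\le m_i\le r_i$; but the coefficient of $X_{d+1}^{r_{j_0}}$ in $S_{j_0}^{j_0}(T;\dots)$ vanishes by the very choice of $j_0$, so in fact $m_{j_0}\le r_{j_0}-1$, and then the degree count $\sum_{i\ne j_0}m_i\le (d+1)-r_{j_0}$ forces $m_i=r_i$ for all $i\ne j_0$ and $m_{j_0}=r_{j_0}-1$. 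Because the shift $T\mapsto T+(u_i-u_{j_0})X_d$ does not involve $X_{d+1}$, the coefficient of $X_{d+1}^{r_i}$ in the $i$-th factor ($i\ne j_0$) is a constant $\lambda_i$, while the coefficient $\nu(T;X_1,\dots,X_d)$ of $X_{d+1}^{r_{j_0}-1}$ in $S_{j_0}^{j_0}(T;\dots)$ is homogeneous of degree one. Hence $P_d=\bigl(\prod_{i\ne j_0}\lambda_i\bigr)\,\nu$.

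The crux is then to describe $\nu$. Factoring $S_{j_0}^{j_0}(T;X_1,\dots,X_{d-1};X_d,X_{d+1})=\prod_{l=1}^{r_{j_0}}(T-a_l)$ with each $a_l$ a linear form in $X_1,\dots,X_{d+1}$ (splitting principle), and letting $\gamma_l$ be the coefficient of $X_{d+1}$ in $a_l$, the vanishing of the coefficient of $X_{d+1}^{r_{j_0}}$ says $\prod_l\gamma_l=0$, so at least one $\gamma_l$ vanishes. If two or more of them vanish, a direct expansion of $\prod_l(T-a_l)$ shows $\nu=0$. If exactly one vanishes, say $\gamma_{l_0}=0$, the same expansion gives $\nu=\pm\bigl(\prod_{l\ne l_0}\gamma_l\bigr)(T-a_{l_0})$, so the coefficient of $T$ in $\nu$ is $\pm\prod_{l\ne l_0}\gamma_l\ne 0$. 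On the other hand $P_d=-\bigl(b^{j_0}(X_1+\cdots+X_{d-1})+c^{j_0}X_d\bigr)$ contains no $T$, so the coefficient of $T$ in $\bigl(\prod_{i\ne j_0}\lambda_i\bigr)\nu$ is $0$; in the second case this forces $\prod_{i\ne j_0}\lambda_i=0$. Either way $P_d=0$, hence $b^{j_0}(X_1+\cdots+X_{d-1})+c^{j_0}X_d=0$, and since $d\ge 2$ the forms $X_1+\cdots+X_{d-1}$ and $X_d$ are linearly independent, so $b^{j_0}=c^{j_0}=0$.

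The step I expect to be the main obstacle is the last one: writing down correctly the degree-one coefficient $\nu$ of $X_{d+1}^{r_{j_0}-1}$ in $S_{j_0}^{j_0}$ in terms of the Chern roots $a_l$, and verifying that the dichotomy on the number of vanishing $\gamma_l$ is exhaustive (the case of no vanishing $\gamma_l$ being already excluded). Everything else — the expansion of the complete symmetric polynomials, the degree counting that determines the $m_i$, and the final linear-independence argument — is routine.
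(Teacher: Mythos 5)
Your reduction to the identity $P_d+b^{j_0}(X_1+\cdots+X_{d-1})+c^{j_0}X_d=0$ and the degree count giving $P_d=\bigl(\prod_{i\ne j_0}\lambda_i\bigr)\nu$ agree with what the paper does (it performs the same comparison of the coefficient of $X_{d+1}^{d}$). The gap is exactly where you predicted it: the factorization $S_{j_0}^{j_0}(T;\dots)=\prod_l(T-a_l)$ with each $a_l$ a \emph{linear form in} $X_1,\dots,X_{d+1}$ is not supplied by the splitting principle. The Chern roots of $E_{j_0}$ live in the Chow ring of a further flag bundle and need not be linear in the $X_j$; and at the level of formal homogeneous polynomials satisfying the stated symmetries your dichotomy is false. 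For instance $S=T^2+\alpha X_dX_{d+1}$ is homogeneous, symmetric in $X_d,X_{d+1}$ (and vacuously in $X_1,\dots,X_{d-1}$), has vanishing coefficient of $X_{d+1}^{2}$, yet $\nu=\alpha X_d$ is nonzero with zero $T$-coefficient. So your argument only yields $\bigl(\prod_{i\ne j_0}\lambda_i\bigr)\mu=0$, where $\mu$ is the $T$-coefficient of $\nu$; it does not exclude the case $\prod_{i\ne j_0}\lambda_i\ne 0$ with $\nu$ a nonzero linear form in the $X_j$ alone, which is precisely the situation in which $b^{j_0}$ or $c^{j_0}$ could survive.

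The paper closes this case differently: assuming $(b^{j_0},c^{j_0})\ne(0,0)$, the displayed identity forces $\lambda_i\ne 0$ for every $i\ne j_0$; then specializing $X_1=\cdots=X_d=0$ in $\mathscr{E}(j_0)$ (where the correction terms contribute $(e+f)X_{d+1}^{d+1}=0$) yields $T^{d+1}=\prod_iS_i^{j_0}(T;0,\dots,0,X_{d+1})$, which is impossible because for $i\ne j_0$ the factor contains the monomial $\lambda_iX_{d+1}^{r_i}$ with $\lambda_i\ne 0$, while every homogeneous factor of $T^{d+1}$ in $k[T,X_{d+1}]$ is a constant times a power of $T$. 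Replacing your Chern-root analysis of $\nu$ by this specialization argument repairs the proof; the rest of your write-up (the use of $e+f=0$, the expansion of the complete symmetric polynomials, the degree count for $P_d$, and the final linear-independence step) is correct.
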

\begin{proof}
Compare the coefficient of $X_{d+1}^{d}$ in both sides of  $ \mathscr{E}(j_0) $.  Since the coefficient of $X_{d+1}^{r_{j_0}}$ in $S_{j_0}^{j_0}(T;X_1,X_2,\cdots X_{d-1};X_d,X_{d+1})$ is zero, if one of $b^{j_0}$ and $c^{j_0}$ is nonzero, then the coefficient of $X_{d+1}^{r_{i}}$  in $S_i^{j_0}(T+(u_{i}-u_{j_0}) X_d;X_1,X_2,\cdots X_{d-1};X_d,X_{d+1})$ is nonzero constant when $i\neq j_0$.

So, when $i\neq j_0$,
$S_i^{j_0}(T+(u_{i}-u_{j_0}) X_d,X_1,\cdots,X_{d+1})$ is of form $a_{i}X_{d+1}^{r_i}+\text{lower degree terms}$, where $a_{i}\neq0$ and
$S_{j_0}^{j_0}(T,X_1,\cdots,X_{d+1})$ is the form of $F_{j_0}^1(T,X_1,\cdots,X_d)X_{d+1}^{r_{j_0}-1}+\text{lower degree terms}$.

Taking $X_1=\cdots=X_{d}=0$ in $ \mathscr{E}(j_0) $, we have
\begin{center}
	$T^{d+1}=S_{j_0}^{j_0}(T,X_{d+1})\overset{k}{\underset{i\neq j_0}{\prod}}(a_{i}X_{d+1}^{r_i}+\text{lower degree terms})$,
\end{center}
which is impossible. So $b^{j_0}=c^{j_0}=0$.
\end{proof}

\begin{proposition}\label{polyan}
If $a\neq 0$ in the equation
\begin{multline*}
		\mathscr{E}(j_0): E^{j_0}(T;X_1,X_2,\cdots X_d)=\overset{k}{\underset{i=1}{\prod}}S_i^{j_0}(T+(u_{i}-u_{j}) X_d;X_1,X_2,\cdots X_{d-1};X_d,X_{d+1})+\\a(T+m_{1}(X_{1}+\cdots+X_{d-1})+m_{2}X_{d})\sum_{n-d}(X_1,\cdots,X_{d+1})+f\sum_{n-d+1}(X_1,\cdots,X_{d})
	\end{multline*}
where $m_1=\frac{b^{j_0}}{a}, m_2=\frac{c^{j_0}}{a}$, then
\[T+m_{1}(X_{1}+\cdots+X_{d-1})+m_{2}X_{d}|S_{j_0}^{j_0}(T;X_1,X_2,\cdots X_{d-1};X_d,X_{d+1})\] and
\[T+m_{1}(X_{1}+\cdots+X_{d-1})+m_{2}X_{d}|E^{j_0}(T;X_1,X_2,\cdots X_d)-f\sum_{n-d+1}(X_1,\cdots,X_{d}).\]
Moreover, if deg $S_{j_0}^{j_0}=1$, then $m_2=0$; if deg $S_{j_0}^{j_0}>1$, then $m_1=m_2=0$.
\end{proposition}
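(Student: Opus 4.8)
The plan is to exploit the factorization structure already present in the relative HN-filtration and the divisibility constraints imposed by the symmetry of the Chern polynomials. Recall that the whole point of $\mathscr{E}(j_0)$ is that after the substitution $T \mapsto T + u_{j_0}(X_1+\cdots+X_d)$, the factor $S_{j_0}^{j_0}$ is the one whose coefficient of $X_{d+1}^{r_{j_0}}$ vanishes; this is what was arranged in the paragraph immediately preceding Proposition \ref{form of Si}. First I would substitute $X_1 = X_2 = \cdots = X_{d-1} = 0$ into the equation $\mathscr{E}(j_0)$, exactly as in the proof of Proposition \ref{form of Si}, and also restrict further to the hypersurface $X_d = 0$ or set $T$ to the value $-m_1(X_1+\cdots+X_{d-1}) - m_2 X_d$ that kills the linear form; the idea is that on the locus where $T + m_1(X_1+\cdots+X_{d-1}) + m_2 X_d = 0$, the two ``correction'' terms $a(T+\cdots)\sum_{n-d}$ and (after the $e+f=0$ reduction and the substitution) the residual piece $f\sum_{n-d+1}(X_1,\dots,X_d)$ collapse so that $E^{j_0}$ agrees with $f\sum_{n-d+1}$ there, which gives the second claimed divisibility, and simultaneously the product $\prod_i S_i^{j_0}$ must vanish there, forcing one factor to vanish.

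The key step is then to argue that it is precisely $S_{j_0}^{j_0}$ that is divisible by $L := T + m_1(X_1+\cdots+X_{d-1}) + m_2 X_d$, and not one of the other $S_i^{j_0}$. Here I would use the vanishing of the top $X_{d+1}$-coefficient of $S_{j_0}^{j_0}$ together with a degree/leading-term count in $X_{d+1}$ analogous to the one in Proposition \ref{form of Si}: if instead some $S_i^{j_0}$ with $i \neq j_0$ were divisible by $L$, then comparing the coefficient of $X_{d+1}^{d}$ (or the appropriate top power) on both sides of $\mathscr{E}(j_0)$ after setting $X_1 = \cdots = X_{d-1} = 0$ would produce an identity of the shape $T^{d+1} = (\text{factor of lower }X_{d+1}\text{-degree})\cdot \prod(\cdots)$ with a mismatch in the power of $X_{d+1}$, which is impossible. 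So the vanishing factor on the locus $L = 0$ can only be $S_{j_0}^{j_0}$, giving $L \mid S_{j_0}^{j_0}$, and then $L \mid E^{j_0} - f\sum_{n-d+1}(X_1,\dots,X_d)$ follows by reading $\mathscr{E}(j_0)$ modulo $L$ (every other summand is manifestly divisible: $\prod_i S_i^{j_0}$ because of the $S_{j_0}^{j_0}$ factor, and $a(T+m_1(\cdots)+m_2 X_d)\sum_{n-d} = aL\sum_{n-d}$ by construction of $m_1, m_2$).

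For the last sentence I would compare degrees and use the symmetry constraints on $S_{j_0}^{j_0}$: $S_{j_0}^{j_0}$ is symmetric in $X_1,\dots,X_{d-1}$ and separately in $X_d, X_{d+1}$, while $L = T + m_1(X_1+\cdots+X_{d-1}) + m_2 X_d$ breaks the $X_d \leftrightarrow X_{d+1}$ symmetry unless $m_2 = 0$ (the variable $X_{d+1}$ does not appear in $L$). If $\deg S_{j_0}^{j_0} = 1$ then $S_{j_0}^{j_0}$ equals $L$ up to scalar, and being symmetric in $X_d, X_{d+1}$ forces $m_2 = 0$. If $\deg S_{j_0}^{j_0} > 1$, then writing $S_{j_0}^{j_0} = L \cdot G$ for some homogeneous $G$, one applies the swap $X_d \leftrightarrow X_{d+1}$: the left side is invariant, so $L \cdot G = L' \cdot G'$ where $L' = T + m_1(X_1+\cdots+X_{d-1}) + m_2 X_{d+1}$ is the swapped linear form; if $m_2 \neq 0$ then $L$ and $L'$ are coprime, hence $L' \mid G$, and iterating (or using that $L$ must also divide $S_{j_0}^{j_0}$ in every variable slot compatible with its symmetry) together with the degree-$1$-in-each-slot nature of the linear forms forces a contradiction with $\deg$, unless $m_1 = m_2 = 0$. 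The main obstacle I anticipate is making this last coprimality-and-symmetry argument fully rigorous — in particular ruling out the degenerate possibility that $m_1$ and $m_2$ conspire so that $L$ and its swap share a common factor, and handling the bookkeeping of which $u_i - u_{j_0}$ shifts appear — but given that $u_i$'s are consecutive and the factors $S_i^{j_0}$ have controlled leading behavior in $X_{d+1}$, this should reduce to the same kind of leading-coefficient comparison used throughout this section.
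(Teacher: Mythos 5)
Your treatment of the two divisibility claims follows the paper's route: substitute $T=-m_1(X_1+\cdots+X_{d-1})-m_2X_d$ into $\mathscr{E}(j_0)$, observe that the left-hand side is then independent of $X_{d+1}$ while every factor $S_i^{j_0}$ with $i\neq j_0$ has nonvanishing top coefficient $a_iX_{d+1}^{r_i}$, so the remaining factor $S_{j_0}^{j_0}$ must vanish identically on that locus; the $\deg S_{j_0}^{j_0}=1$ case ($m_2=0$ from the $X_d\leftrightarrow X_{d+1}$ symmetry) is also exactly the paper's argument. One caveat: in your first paragraph you assert that ``the product must vanish there'' before giving a reason; the vanishing is a consequence of the $X_{d+1}$-independence of the left side combined with the nonzero leading $X_{d+1}$-coefficients of the factors $i\neq j_0$, not something that follows from the correction terms collapsing.

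The genuine gap is in the case $\deg S_{j_0}^{j_0}>1$. Your plan --- write $S_{j_0}^{j_0}=L\cdot G$, swap $X_d\leftrightarrow X_{d+1}$ to get $L'\mid G$ when $m_2\neq 0$, and iterate to a degree contradiction --- cannot succeed as stated: the product $LL'=(T+m_1(X_1+\cdots+X_{d-1})+m_2X_d)(T+m_1(X_1+\cdots+X_{d-1})+m_2X_{d+1})$ is already symmetric in $X_1,\dots,X_{d-1}$ and in $X_d,X_{d+1}$, so nothing prevents $S_{j_0}^{j_0}$ from being exactly $LL'$ (when $r_{j_0}=2$) for arbitrary $m_1,m_2$; the iteration terminates after one step with no contradiction, because the symmetry group of $S_{j_0}^{j_0}$ alone is too small. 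The paper instead transports the divisibility to $E^{j_0}-f\sum_{n-d+1}$, which is symmetric under the full symmetric group on $X_1,\dots,X_d$. If $m_1\neq m_2$ the $d$ conjugate linear forms are distinct, forcing $E^{j_0}-f\sum_{n-d+1}=\prod_{i=1}^d\bigl(T+m_1(X_1+\cdots+\hat{X_i}+\cdots+X_d)+m_2X_i\bigr)\bigl(T+n(X_1+\cdots+X_d)\bigr)$, and substituting $T=-n(X_1+\cdots+X_d)$ into the equation obtained from $\mathscr{E}(j_0)$ after dividing by $L$ yields $0=(\text{product})+a\sum_{n-d}(X_1,\cdots,X_{d+1})$, contradicting the irreducibility of $\sum_{n-d}$ since $a\neq0$. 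Then, with $m_1=m_2=m\neq0$, the $X_d\leftrightarrow X_{d+1}$ symmetry of $S_{j_0}^{j_0}$ produces the factor $T+m(X_1+\cdots+X_{d-1}+X_{d+1})$ of the quotient $\tilde{E}^{j_0}-a\sum_{d}$, whence $\tilde{E}^{j_0}-a\sum_{d}=\prod_{i=1}^d\bigl(T+m(X_1+\cdots+\hat{X_i}+\cdots+X_{d+1})\bigr)$, and the coefficient of $T^{d-1}$, namely $m\bigl((d-1)(X_1+\cdots+X_d)+dX_{d+1}\bigr)$, must be free of $X_{d+1}$, forcing $m=0$. Both the irreducibility of the power sum $\sum_{n-d}$ and this coefficient comparison are absent from your sketch, and they are precisely what closes the argument; the obstacle you flagged is real and is not just bookkeeping.
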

\begin{proof}
Suppose that $a\neq 0$. By the similar argument in Proposition \ref{form of Si},
\[S_i^{j_0}(T+(u_{i}-u_{j_0}) X_d,X_1,\cdots,X_{d+1})\] is of the form  $a_{i}X_{d+1}^{r_i}+\text{lower degree terms}$, where $a_{i}\neq0$, when $i\neq j_0$,
and
$S_{j_0}^{j_0}(T,X_1,\cdots,X_{d+1})$ is of the form $F_{j_0}^1(T,X_1,\cdots,X_d)X_{d+1}^{r_{j_0}-1}+\text{lower degree terms}$.

Taking $T=-m_{1}(X_{1}+\cdots+X_{d-1})-m_{2}X_{d}$ in  $ \mathscr{E}(j_0) $, we get that
\begin{multline*}
	E^{j_0}(-m_{1}(X_{1}+\cdots+X_{d-1})-m_{2}X_{d};X_1,X_2,\cdots X_d)-f\sum_{n-d+1}(X_1,\cdots,X_{d})=\\\overset{k}{\underset{i=1}{\prod}}S_i^{j_0}(-m_{1}(X_{1}+\cdots+X_{d-1})-m_{2}X_{d}+(u_{i}-u_{j_0}) X_d;X_1,X_2,\cdots X_{d-1};X_d,X_{d+1}).
\end{multline*}
Since left side of the above equation is independent of $X_{d+1}$ and $a_{i}\neq0$,
\begin{center}
	$S_{j_0}^{j_0}(-m_{1}(X_{1}+\cdots+X_{d-1})-m_{2}X_{d};X_1,X_2,\cdots X_{d-1};X_d,X_{d+1})=0$.
\end{center}
So \[T+m_{1}(X_{1}+\cdots+X_{d-1})+m_{2}X_{d}|S_{j_0}^{j_0}(T;X_1,X_2,\cdots X_{d-1};X_d,X_{d+1})
\] and
\[T+m_{1}(X_{1}+\cdots+X_{d-1})+m_{2}X_{d}|E^{j_0}(T;X_1,X_2,\cdots X_d)-f\sum_{n-d+1}(X_1,\cdots,X_{d}).\]
	
Assume that
\[E^{j_0}(T;X_1,\cdots,X_d)=(T+m_1(X_1+\cdots+X_{d-1})+m_2X_d){\overset{\sim}{E}}^{j_0}(T,X_1,\cdots,X_d)\]
and
\begin{multline*}
S_{j_0}^{j_0}(T,X_1\cdots,X_{d-1};X_d,X_{d+1})\\=(T+m_1(X_1+\cdots+X_{d-1})+m_2X_d)\overset{\sim}{S}_{j_0}^{j_0}(T,X_1,\cdots,X_{d-1};X_d,X_{d+1}).
\end{multline*}

When deg $S_{j_0}^{j_0}=1$, since $S_{j_0}^{j_0}(T;X_1,X_2,\cdots X_{d-1};X_d,X_{d+1})$ is symmetric about $X_1,\cdots,X_{d-1}$ and $X_d,X_{d+1}$ respectively, we get $m_2=0$.

If deg $S_{j_0}^{j_0}>1$ and $m_1\neq m_2$, then
\begin{multline*}
E(T,X_1,\cdots,X_d)-f\sum_{n-d+1}(X_1,\cdots,X_{d})=\\\overset{d}{\underset{i=1}{\prod}}(T+m_1(X_1+\cdots+\overset{\wedge}{X_i}+\cdots+X_{d}+m_2X_i)(T+n(X_1+\cdots+X_d))
\end{multline*}
since $E^{j_0}(T;X_1,X_2,\cdots X_d)-f\sum_{n-d+1}(X_1,\cdots,X_{d})$ is symmetric about $X_1,\cdots,X_d$.

The following equation is deduced from  $ \mathscr{E}(j_0) $.
\begin{multline*}
		\mathscr{\overline{E}}(j_0): \overset{d-1}{\underset{i=1}{\prod}}(T+m_1(X_1+\cdots+\overset{\wedge}{X_i}+\cdots+X_{d})+m_2X_i)(T+n(X_1+\cdots+X_d))=\\\overset{\sim}{S}_{j_0}^{j_0}(T,X_1,\cdots,X_{d+1})\overset{k}{\underset{i\neq j_0}{\prod}}S_i^{j_0}(T+(u_{i}-u_{j_0}) X_d;X_1,X_2,\cdots X_{d-1};X_d,X_{d+1})+a\sum_{n-d}(X_1,\cdots,X_{d+1}).
\end{multline*}
Taking $T=-n(X_1+\cdots+X_d)$ in $ \mathscr{\overline{E}}(j_0)$, we have
\begin{multline*} 0=\overset{\sim}{S}_{j_0}^{j_0}(-n(X_1+\cdots+X_d),X_1,\cdots,X_{d+1})\overset{k}{\underset{i\neq j_0}{\prod}}S_i^{j_0}(-n(X_1+\cdots+X_d)+\\(u_{i}-u_{j_0}) X_d;X_1,X_2,\cdots X_{d-1};X_d,X_{d+1})+a\sum_{n-d}(X_1,\cdots,X_{d+1})
\end{multline*}
which contradicts to the fact that $\sum_{n-d}(X_1,\cdots,X_{d+1})$ is irreducible when $d\geqslant 2$. So $m_1=m_2$.

If $m_1\neq0$, set
\[E^{j_0}(T;X_1,\cdots,X_d)=(T+m_1(X_1+\cdots+X_{d-1}+X_d))\overset{\sim}{E}^{j_0}(T,X_1,\cdots,X_d)\] and	
\begin{multline*}	S_{j_0}^{j_0}(T,X_1,\cdots,X_{d-1};X_d,X_{d+1})=\\(T+m_1(X_1+\cdots+X_{d-1}+X_d))\overset{\sim}{S}_{j_0}^{j_0}(T,X_1,\cdots,X_{d-1};X_d,X_{d+1}).
\end{multline*}

From  $ \mathscr{E}(j_0)$, we have
\begin{multline*} \overset{\sim}{E}^{j_0}(T,X_1,\cdots,X_d)-a\sum_{d}(X_1,\cdots,X_{d+1})=\\\overset{\sim}{S}_{j_0}^{j_0}(T,X_1,\cdots,X_{d+1})\overset{k}{\underset{i\neq j_0}{\prod}}S_i^{j_0}(T+(u_{i}-u_{j_0}) X_d;X_1,X_2,\cdots X_{d-1};X_d,X_{d+1}).
\end{multline*}
Clearly, $\overset{\sim}{E}^{j_0}(T,X_1,\cdots,X_d)$ is symmetric about $X_1,\cdots,X_d$  while ${S}_{j_0}^{j_0}(T,X_1,\cdots,X_{d-1};X_d,X_{d+1})$ is symmetric about $X_1,\cdots,X_{d-1}$ and $X_d,X_{d+1}$ respectively, so
\begin{center}	$T+m_1(X_1+\cdots+X_{d-1}+X_{d+1})|\overset{\sim}{S}_{j_0}^{j_0}(T,X_1,\cdots,X_{d-1};X_d,X_{d+1})$.
\end{center}
Then
 \begin{center}	$T+m_1(X_{1}+\cdots+X_{d-1}+X_{d+1})|E^{j_0}(T;X_1,\cdots,X_d)-a\sum_{d}(X_1,\cdots,X_{d+1})$.
\end{center}
Since the right side of above equation is symmetric about $X_1,\cdots,X_d$,
\begin{center}
$E^{j_0}(T;X_1,\cdots,X_d)-a\sum_{d}(X_1,\cdots,X_{d+1})=\overset{d}{\underset{i=1}{\prod}}(T+m_1(X_1+\cdots+\overset{\wedge}{X_i}+\cdots+X_{d+1}).$
\end{center}
Comparing the coefficient of $T^{d-1}$, we know that $(m_1(d-1))(X_1+\cdots X_d)+dm_1X_{d+1}$ is independent of $X_{d+1}$. So $m_1=0$, which contradicts to the assumption. Therefore, $m_1=m_2=0$.
\end{proof}

\begin{proposition}\label{B}
	When $a\neq0$ and deg $S_{j_0}=1$, $E^{j_0}(T;X_1,\cdots,X_d)$ is one of the following cases.
	
	\emph{(i).} $k=2$ (without loss of generality, we can assume $j_0=1$.):
	
	\[S_1(T,X_1,\cdots,X_{d+1})=T+u_2(X_1+\cdots+X_{d-1}),\]
	\begin{multline*}
	S_2(T,X_1,\cdots,X_{d+1})=\overset{d-1}{\underset{i=1}{\prod}}(T+u_2(X_1+\cdots+\overset{\wedge}{X_i}+\cdots\\+X_{d-1})+u_1X_i)(T+u_2(X_1+\cdots+X_{d-1}))+a\sum_{n-d}(X_1,\cdots,X_{d+1}),
\end{multline*}
 	\begin{multline*}
 	E(T,X_1,\cdots,X_d)=\overset{d}{\underset{i=1}{\prod}}(T+u_2(X_1+\cdots+\overset{\wedge}{X_i}+\cdots\\+X_{d})+u_1X_i)(T+u_2(X_1+\cdots+X_d))+f\sum_{n-d+1}(X_1,\cdots,X_{d})
\end{multline*}

and
\begin{multline*}
\mathscr{E}(1): E^{1}(T,X_1,\cdots,X_d)=\overset{d}{\underset{i=1}{\prod}}(T+(u_2-u_1)(X_1+\cdots\\+\overset{\wedge}{X_i}+\cdots+X_{d}))(T+(u_2-u_1)(X_1+\cdots+X_d))+f\sum_{n-d+1}(X_1,\cdots,X_{d}).
\end{multline*}

\emph{(ii).} $k=2$ (without loss of generality, we can assume $j_0=1$)	:
\[S_1(T,X_1,\cdots,X_{d+1})=T+u_1(X_1+\cdots+X_{d-1}),\]
	\[S_2(T,X_1,\cdots,X_{d+1})=(T+u_2(X_1+\cdots+X_{d-1}))^d+a\sum_{n-d}(X_1,\cdots,X_{d+1}),\]
	\begin{multline*}
		E(T,X_1,\cdots,X_d)=(T+u_1(X_1+\cdots+X_{d})((T+u_2(X_1+\cdots\\+X_{d}))^d))+f\sum_{n-d+1}(X_1,\cdots,X_{d})
\end{multline*}
and
\begin{multline*}
	\mathscr{E}(1): E^{1}(T,X_1,\cdots,X_d)=T(T+(u_2-u_1)(X_1+\cdots+X_{d}))^d+f\sum_{n-d+1}(X_1,\cdots,X_{d}).
\end{multline*}

\emph{(iii).} $k=3$:

\begin{multline*}
	E(T,X_1,\cdots,X_d)=(T+u_{j_0}(X_1+\cdots+X_d))[(T+u_q(X_1+\cdots\\+X_{d})-\beta X_{d+1})\sum_{d-1}(T+u_q(X_1+\cdots+X_d),\beta X_1,\cdots,\\\beta X_{d+1})+(\beta)^{n-d}\sum_{n-d}(X_1,\cdots,X_{d+1})]+f\sum_{n-d+1}(X_1,\cdots,X_d),
\end{multline*}	
	\[S_{j_0}(T,X_1,\cdots,X_{d+1})=T+u_{j_0}(X_1+\cdots+X_{d-1}),\]
	\[S_p(T,X_1,\cdots,X_{d+1})=T+u_q(X_1+\cdots+X_{d-1})+(u_q-u_p)(X_d+X_{d+1})\]	
and	\[S_q(T,X_1,\cdots,X_{d+1})=\sum_{d-1}(T+u_q(X_1+\cdots+X_{d-1}),(u_p-u_q)X_1,\cdots,(u_p-u_q)X_{d+1}),\]
where $p$ and $q$ are two indices different from $j_0$, $r_p=1$, $r_q=d-1$ and $\beta=u_p-u_q$.

The $\mathscr{E}(j_0)$ is :
\begin{multline*}
	E^{j_0}(T,X_1,\cdots,X_d)=T[(T+(u_q-u_{j_0})(X_1+\cdots\\+X_{d})-\beta X_{d+1})\sum_{d-1}(T+(u_q-u_{j_0})(X_1+\cdots+X_d),\beta X_1,\cdots,\\\beta X_{d+1})+(\beta)^{n-d}\sum_{n-d}(X_1,\cdots,X_{d+1})]+f\sum_{n-d+1}(X_1,\cdots,X_d).
\end{multline*}
\end{proposition}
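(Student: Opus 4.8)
The plan is to combine Proposition~\ref{polyan} with an explicit factorisation that removes $X_{d+1}$ from $\mathscr{E}(j_0)$, and then to feed the resulting smaller identity into the same polynomial-identity machinery already used above (\cite{Ell}, Chapter V; \cite{Bal}; and the arguments behind $e+f=0$ and Propositions~\ref{form of Si} and \ref{polyan}).

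\textbf{Reduction.} First, after twisting $E$ by $\mathcal{O}_{G(d,n)}(-u_{j_0})$ we may assume $u_{j_0}=0$, so that $\mathscr{E}(j_0)=\mathscr{E}$, $S_i^{j_0}=S_i$, and the $u_i$ for $i\neq j_0$ are distinct nonzero integers consecutive with $0$. Since $\deg S_{j_0}=1$ and $a\neq0$, Proposition~\ref{polyan} gives $m_2=0$ and $T+m_1(X_1+\cdots+X_{d-1})\mid S_{j_0}$, so by monicity $S_{j_0}=T+m_1(X_1+\cdots+X_{d-1})$. In $\mathscr{E}(j_0)$ the product term factors as $S_{j_0}\cdot\prod_{i\neq j_0}S_i(T+u_iX_d;X)$ (the shift vanishes at $i=j_0$) and, because $m_2=0$, the error term is $a\,S_{j_0}\sum_{n-d}(X_1,\cdots,X_{d+1})$; moving $f\sum_{n-d+1}(X_1,\cdots,X_d)$ to the left therefore gives
\[E^{j_0}(T;X)-f\sum_{n-d+1}(X_1,\cdots,X_d)=S_{j_0}\cdot H,\]
where
\[H:=\prod_{i\neq j_0}S_i(T+u_iX_d;X)+a\sum_{n-d}(X_1,\cdots,X_{d+1}).\]
The left-hand side is free of $X_{d+1}$ and so is $S_{j_0}$, whence $H$ is free of $X_{d+1}$; moreover $H$ is homogeneous, monic of degree $d$ in $T$, and symmetric in $X_1,\cdots,X_{d-1}$. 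So the whole problem reduces to classifying the identity
\[\prod_{i\neq j_0}S_i(T+u_iX_d;X)+a\sum_{n-d}(X_1,\cdots,X_{d+1})=H(T;X_1,\cdots,X_{d-1};X_d)\]
with right-hand side independent of $X_{d+1}$, where each $S_i$ is symmetric in $X_1,\cdots,X_{d-1}$ and in $X_d,X_{d+1}$ and $\sum_{i\neq j_0}r_i=d$.

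\textbf{Case analysis.} This identity has the shape of $\mathscr{E}$ in a rank-$d$ situation with $k-1$ graded pieces and no analogue of the $f$-term. Setting $X_1=\cdots=X_{d-1}=0$ turns $\sum_{n-d}(X_1,\cdots,X_{d+1})$ into $\sum_{n-d}(X_d,X_{d+1})$ and produces precisely the kind of restriction-to-a-line polynomial relation treated in \cite{Bal} and \cite{Ell}, Chapter V; running those arguments, and using (as in Proposition~\ref{polyan}) that $\sum_{n-d}(X_1,\cdots,X_{d+1})$ is irreducible for $d\geq2$, first gives $k-1\leq2$, i.e. $k\leq3$, and then forces each $S_i$ into one of two rigid shapes: a product of linear forms with pairwise distinct slopes, or a single complete homogeneous symmetric polynomial $\sum_m(\cdots)$ — the latter being the only way $\prod_{i\neq j_0}S_i(T+u_iX_d;X)$ can lose its $X_{d+1}$-dependence after adding $a\sum_{n-d}$. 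One then splits on whether $m_1=0$. If $m_1\neq0$, then $S_{j_0}=T+m_1(X_1+\cdots+X_{d-1})$ is one of $d$ pairwise coprime linear factors of the symmetric polynomial $E^{j_0}-f\sum_{n-d+1}$, so the latter splits completely into linear forms; this gives case~(i). If $m_1=0$, then $E^{j_0}-f\sum_{n-d+1}=T\cdot H$, and analysing $H$ as in Proposition~\ref{polynomial when d<n-d-1} gives case~(ii) when $k=2$ (the degree-$d$ factor being a $d$-th power) and case~(iii) when $k=3$ (where necessarily $\{r_p,r_q\}=\{1,d-1\}$, $S_p$ is linear, and $S_q$ is a complete homogeneous symmetric polynomial of degree $d-1$). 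Substituting back into $E^{j_0}=S_{j_0}\cdot H+f\sum_{n-d+1}(X_1,\cdots,X_d)$ and using that the $u_i$ are consecutive recovers the three displayed formulas.

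\textbf{Main obstacle.} The hard part will be the case analysis, and within it two points: (a) ruling out $k\geq4$; and (b) showing that for $k=3$ the degree-$(d-1)$ factor $S_q$ must be exactly $\sum_{d-1}\big(T+u_q(X_1+\cdots+X_{d-1}),\,(u_p-u_q)X_1,\cdots,(u_p-u_q)X_{d+1}\big)$, and not some other symmetric polynomial of the same degree. Both follow from the irreducibility of $\sum_{n-d}$ combined with the vanishing of the $X_{d+1}$-dependence of $H$, but making them precise needs the coefficient-by-coefficient bookkeeping of \cite{Ell}, Chapter V, and \cite{Bal}; this is where the hypotheses $a\neq0$ and $\deg S_{j_0}=1$ are used in full.
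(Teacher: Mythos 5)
Your proposal follows essentially the same route as the paper's proof: use Proposition \ref{polyan} to pin $S_{j_0}^{j_0}$ down to $T+m_1(X_1+\cdots+X_{d-1})$, factor it out of $\mathscr{E}(j_0)$ to obtain the auxiliary degree-$d$ identity for $H=E^{'j_0}$, invoke the Ellia--Ballico restriction analysis to force $k\le 3$, and then split on $m_1=0$ versus $m_1\neq 0$, using the irreducibility of $\sum_{n-d}(X_1,\cdots,X_{d+1})$ and Guyot's classification of the resulting symmetric factorizations to land in the three displayed cases. The paper carries out exactly this plan (organizing by $k$ first, and fixing the constants $m=g=u_2-u_1$ via the substitution $X_1=\cdots=X_{d-1}=0$, $T=T'+(u_1-u_2)X_d$ and the $X_d,X_{d+1}$-symmetry of $S_2$), so your outline matches it in both structure and the external inputs it relies on.
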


\begin{proof}

When deg $S_{j_0}=1$, by Proposition \ref{polyan}, $S_{j_0}^{j_0}=T+m(X_1+\cdots+X_{d-1})$, where $m=\frac{b^{j_0}}{a}$.

Assume that
\begin{multline*}	E^{j_0}(T,X_1,\cdots,X_d)-f\sum_{n-d+1}(X_1,\cdots,X_{d})=(T+m(X_1+\cdots+X_{d-1}))E^{'j_0}(T,X_1,\cdots,X_d).
\end{multline*}

Then the equation $\mathscr{E}(j_0)$ is of the form
\begin{multline*}
E^{j_0}(T;X_1,X_2,\cdots X_d)=(T+m(X_{1}+\cdots+X_{d-1}))\overset{k}{\underset{i\neq j_0}{\prod}}S_i^{j_0}(T+\\(u_{i}-u_{j_0}) X_d;X_1,X_2,\cdots, X_{d-1};X_d,X_{d+1})+a[T+m(X_{1}+\cdots+\\X_{d-1})]\sum_{n-d}(X_1,\cdots,X_{d+1})+f\sum_{n-d+1}(X_1,\cdots,X_{d}).
\end{multline*}
So
\begin{multline*}
	E^{'j_0}(T,X_1,\cdots,X_d)=\overset{k}{\underset{i\neq j_0}{\prod}}S_i^{j_0}(T+(u_{i}-u_{j_0}) X_d;X_1,X_2,\cdots,\\ X_{d-1};X_d,X_{d+1})+a\sum_{n-d}(X_1,\cdots,X_{d+1}).
\end{multline*}
Taking $X_1=\cdots=X_{d-1}=0$, in \cite{Ell}[\Rmnum{3}-2], the author proved that only the following cases may happen.
\begin{enumerate}
\item[A).] $a=0$;
\item[B).] $k-1=1$;
\item[C).] $k-1=2$.
\end{enumerate}

Case A). $a=0$: By Proposition \ref{form of Si}, we get $b^{j_0}=c^{j_0}=0$. So, we reduce the question to Section 3.

Case B). $k=2$: Without loss of generality, we can assuming $j_0=1$.

If $m\neq 0$, then $E^{1}(T,X_1,\cdots,X_d)-f\sum_{n-d+1}(X_1,\cdots,X_{d})$ is symmetric about $X_1,\cdots,X_d$ and divisible by $T+m(X_1+\cdots+X_{d-1})$. So
\begin{multline*} E^{1}(T,X_1,\cdots,X_d)-f\sum_{n-d+1}(X_1,\cdots,X_{d})=\\\overset{d}{\underset{i=1}{\prod}}[T+m(X_1+\cdots+\overset{\wedge}{X_i}+\cdots+X_{d})]F(T,X_1,\cdots,X_{d}).
\end{multline*}

Since $E^{1}(T,X_1,\cdots,X_d)$ is of degree $d+1$, $F(T,X_1,\cdots,X_{d})$ is of degree $1$ and symmetric about $X_1,\cdots,X_d$. So $$F(T,X_1,\cdots,X_{d+1})=T+g(X_1+\cdots+X_d).$$

The following equation is deduced from $\mathscr{E}(1)$.

\begin{multline*}
	 \overset{d-1}{\underset{i=1}{\prod}}(T+m(X_1+\cdots+\overset{\wedge}{X_i}+\cdots+X_{d})(T+g(X_1+\cdots+X_d))=\\S_2^1(T+(u_2-u_1)X_d,X_1,\cdots,X_{d+1})+a\sum_{n-d}(X_1,\cdots,X_{d+1}).
\end{multline*}
Taking $X_1=\cdots=X_{d-1}=0$ and $T=T'+(u_1-u_2)X_d$ in above equation, we have
\begin{multline*}
	 (T'+(m+u_1-u_2)X_d)^{d-1}(T'+(g+u_1-u_2)X_d)=\\S_2(T',0,\cdots,0,X_d,X_{d+1})+a\sum_{d}(X_d,X_{d+1}).
\end{multline*}
Since the right side of the above equation is symmetric about $X_d,X_{d+1}$, we can get \[m+u_1-u_1=g+u_1-u_2=0, ~\emph{i.e.}~ m=g=u_2-u_1.\]

Thus, the equation $\mathscr{E}(1)$ is
\begin{multline*}
	 E^{1}(T,X_1,\cdots,X_d)=\overset{d}{\underset{i=1}{\prod}}(T+(u_2-u_1)(X_1+\cdots\\+\overset{\wedge}{X_i}+\cdots+X_{d})(T+(u_2-u_1)(X_1+\cdots+X_d))+f\sum_{n-d+1}(X_1,\cdots,X_{d}).
\end{multline*}

If $m=0$, then $\mathscr{E}(1)$ becomes

\begin{multline*}
E^{1}(T;X_1,X_2,\cdots X_d)=TS_2^{1}(T+(u_2-u_1)X_d;X_1,X_2,\cdots,\\ X_{d-1};X_d,X_{d+1})+aT\sum_{n-d}(X_1,\cdots,X_{d+1})+f\sum_{n-d+1}(X_1,\cdots,X_{d}).	
\end{multline*}

Suppose that
\begin{center}
	$E^{1}(T;X_1,X_2,\cdots X_d)-f\sum_{n-d+1}(X_1,\cdots,X_{d})=TE^{'1}(T,X_1,\cdots,X_d)$.
\end{center}
Then
\begin{multline*}
	E^{'1}(T,X_1,\cdots,X_d)=S_2^{1}(T+(u_2-u_1)X_d;X_1,X_2,\cdots,\\ X_{d-1};X_d,X_{d+1})+a\sum_{n-d}(X_1,\cdots,X_{d+1}).
\end{multline*}
Clearly, $E^{'1}(T,X_1,\cdots,X_d)$ is symmetric about $X_1,\cdots,X_d$. So, by Proposition 4.2 in \cite{Guy}, we have
\begin{center}
	$ E^{1'}(T;X_1,X_2,\cdots X_{d})=(T+(u_2-u_1)(X_1+\cdots+X_d))^d $.
\end{center}
The equation $\mathscr{E}(1)$ is
\begin{center} $E^{1}(T,X_1,\cdots,X_d)=T(T+(u_2-u_1)(X_1+\cdots+X_{d}))^d+f\sum_{n-d+1}(X_1,\cdots,X_{d})$.
\end{center}

Case C): $k=3$.

If $m\neq 0$, by argument above, we have
\begin{multline*} \overset{d-1}{\underset{i=1}{\prod}}(T+m(X_1+\cdots+\overset{\wedge}{X_i}+\cdots+X_{d})(T+g(X_1+\cdots+X_d))=\\\overset{3}{\underset{i\neq j_0}{\prod}}S_i^{j_0}(T+(u_{i}-u_{j_0}) X_d;X_1,X_2,\cdots, X_{d-1};X_d,X_{d+1})+a\sum_{n-d}(X_1,\cdots,X_{d+1}) .
\end{multline*}
Taking $T=-g(X_1+\cdots+X_d)$ in above equation, we get
\begin{center}
	$0=\overset{3}{\underset{i\neq j_0}{\prod}}S_i^{'j_0}(X_1,X_2,\cdots,X_{d+1})+a\sum_{n-d}(X_1,\cdots,X_{d+1})$.
\end{center}
Since $\sum_{n-d}(X_1,\cdots,X_{d+1})$ is irreducible when $d+1\geq 3$, we have $a=0$. We reduce the case to Section 3.

If $m=0$, $E^{'j_0}$ is symmetric about $X_1,\cdots,X_d$,
by Lemma 4.2.3 in \cite{Guy}, we have
\begin{multline*}
	E^{'j_0}(T,X_1,\cdots,X_d)=(T^{*}-\beta X_{d+1})\sum_{d-1}(T^{*},\beta X_1,\cdots,\\\beta X_{d+1})+(\beta)^{n-d}\sum_{n-d}(X_1,\cdots,X_{d+1}),
\end{multline*}
where $p$ and $q$ are two indices different from $j_0$, $r_p=1$, $r_q=d-1$ and $\beta=u_p-u_q$.

Now, \begin{center}
	$ E^{j_0}(T^*,X_1,\cdots,X_d)=T\sum_{d}(T^*,\beta X_1,\cdots,\beta X_d)+f\sum_{n-d+1}(X_1,\cdots,X_d)$,
\end{center}
	where $T^*=T+(u_{q}-u_{j_0})(X_1+\cdots+X_d)$,  $\beta=u_p-u_q$ and $p$ and $q$ are two indices different from $j_0$, $r_p=1$ and $r_q=d-1$.
Thus we get $\mathscr{E}(1)$ when $k=3$.
\end{proof}

\begin{proposition}\label{C}
	 When $a\neq0$ and deg $S_{j_0}>1$, then $ E^{j_0}(T,X_1,\cdots,X_d) $ is one of the following cases.
	
	\emph{(i).} $k=2$ (without loss of generality, we can assume $j_0=1$):
	
\begin{multline*}
		E(T,X_1,\cdots,X_d)=(T+u_{1}(X_1+\cdots+X_d))[(T+\\u_1(X_1+\cdots+X_{d})+\beta X_{d+1})\sum_{d-1}(T+u_q(X_1+\cdots+X_d),-\beta X_1,\cdots,-\beta X_{d+1})+\\(-\beta)^{n-d}\sum_{n-d}(X_1,\cdots,X_{d+1})]+f\sum_{n-d+1}(X_1,\cdots,X_d),	
	\end{multline*}
\begin{multline*}
		S_1(T,X_1,\cdots,X_{d+1})=[T+u_1(X_1+\cdots+X_{d-1})][T+u_2(X_1+\cdots\\+X_{d-1})+\beta (X_d+X_{d+1})],
\end{multline*}
\[S_2(T,X_1,\cdots,X_{d+1})=\sum_{d-1}(T+u_2(X_1+\cdots+X_{d-1}),-\beta X_1,\cdots,-\beta X_{d+1})\]
and
\begin{multline*}
		\mathscr{E}(1): E^{1}(T,X_1,\cdots,X_d)=T[(T+\beta (X_1+\cdots\\+X_{d+1}))\sum_{d-1}(T+\beta(X_1+\cdots+X_d),-\beta X_1,\cdots,-\beta X_{d+1})+\\(-\beta)^{n-d}\sum_{n-d}(X_1,\cdots,X_{d+1})]+f\sum_{n-d+1}(X_1,\cdots,X_d),
\end{multline*}
where $\beta=u_2-u_1$.
		
\emph{(ii).} $k=2$ (without loss of generality, we can assume $j_0=1$):
	\begin{multline*}
		E(T,X_1,\cdots,X_d)=(T+u_{1}(X_1+\cdots+X_d))[(T+u_1(X_1+\cdots\\+X_{d})+\beta X_{d+1})\sum_{d-1}(T+u_1(X_1+\cdots+X_d),-\beta X_1,\cdots,-\beta X_{d+1})+\\(-\beta)^{n-d}\sum_{n-d}(X_1,\cdots,X_{d+1})]+f\sum_{n-d+1}(X_1,\cdots,X_d),
	\end{multline*}
\begin{multline*}
		S_1(T,X_1,\cdots,X_{d+1})=\sum_{d-1}(T+u_1(X_1+\cdots\\+X_{d-1}),-\beta X_1,\cdots,-\beta X_{d+1})[T+u_1(X_1+\cdots+X_{d-1})],
	\end{multline*}
		\[S_2(T,X_1,\cdots,X_{d+1})=T+u_1(X_1+\cdots+X_{d-1})+\beta (X_d+X_{d+1})\]
and
\begin{multline*}
	\mathscr{E}(1): E^{1}(T,X_1,\cdots,X_d)=T[(T+\beta X_{d+1})\sum_{d-1}(T,-\beta X_1,\cdots,\\-\beta X_{d+1})+(-\beta)^{n-d}\sum_{n-d}(X_1,\cdots,X_{d+1})]+f\sum_{n-d+1}(X_1,\cdots,X_d),
\end{multline*}
where $\beta=u_1-u_2$.
\end{proposition}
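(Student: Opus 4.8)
The plan is to mimic, step by step, the structure already established for \textbf{Proposition \ref{B}}, now under the hypothesis $\deg S_{j_0}>1$, so that the combinatorial input of Proposition \ref{polyan} can be applied with full force. By Proposition \ref{polyan}, when $a\neq 0$ and $\deg S_{j_0}^{j_0}>1$ we get $m_1=m_2=0$, hence $T\mid S_{j_0}^{j_0}(T;X_1,\dots,X_{d-1};X_d,X_{d+1})$ and $T\mid E^{j_0}(T;X_1,\dots,X_d)-f\sum_{n-d+1}(X_1,\dots,X_d)$. First I would write $E^{j_0}(T;X_1,\dots,X_d)-f\sum_{n-d+1}(X_1,\dots,X_d)=T\cdot E'^{j_0}(T,X_1,\dots,X_d)$ and $S_{j_0}^{j_0}=T\cdot \widetilde S_{j_0}^{j_0}$, and divide the defining equation $\mathscr{E}(j_0)$ by $T$, obtaining
\[
E'^{j_0}(T,X_1,\dots,X_d)=\widetilde S_{j_0}^{j_0}(T,X_1,\dots,X_{d+1})\overset{k}{\underset{i\neq j_0}{\prod}}S_i^{j_0}(T+(u_i-u_{j_0})X_d;\dots)+a\sum_{n-d}(X_1,\dots,X_{d+1}),
\]
in which $E'^{j_0}$ is symmetric about $X_1,\dots,X_d$. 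The degree count $\deg E'^{j_0}=d$, together with $\deg S_{j_0}^{j_0}\geq 2$, forces $k\le 2$ (if $k\ge 3$ the product on the right already has degree $\ge d+1-\deg S_{j_0}^{j_0}+\sum_{i\neq j_0}\ge\dots$, contradiction once we also recall $\sum_{i\neq j_0}r_i=d+1-r_{j_0}$ and $r_{j_0}\ge 2$); so only $k=1$ and $k=2$ survive, and $k=1$ reduces to the trivial bundle as already noted before the case split. Hence $k=2$, and we may take $j_0=1$.

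Next, set $\beta=u_2-u_1$ (or $u_1-u_2$ in case (ii), matching the two allowed parities of which factor vanishes) and apply the structural lemma for $E'^{1}$: restricting to $X_1=\cdots=X_{d-1}=0$ reduces the identity to an identity in $T,X_d,X_{d+1}$ of the exact shape treated in \cite{Guy} (Lemma 4.2.3) and \cite{Ell}, which gives
\[
E'^{1}(T,X_1,\dots,X_d)=(T^*+\beta X_{d+1})\sum_{d-1}(T^*,-\beta X_1,\dots,-\beta X_{d+1})+(-\beta)^{n-d}\sum_{n-d}(X_1,\dots,X_{d+1}),
\]
with $T^*=T+u_1(X_1+\cdots+X_d)$. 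From this, reading off $S_1^1$ and $S_2^1$ from the factorization $E'^{1}=\widetilde S_1^1\cdot S_2^1+a\sum_{n-d}$ and undoing the substitution $T\mapsto T-u_1(X_1+\cdots+X_d)$ yields the two families displayed in (i) and (ii): in (i) $S_1$ is the product $(T+u_1(\cdots))(T+u_2(\cdots)+\beta(X_d+X_{d+1}))$ and $S_2=\sum_{d-1}(\cdots)$, i.e. $r_1=2$, $r_2=d-1$; in (ii) the roles of which factor carries the $\sum_{d-1}$ are swapped, i.e. $r_1=d$, $r_2=1$. Re-substituting $T$ back and multiplying by the stripped linear factor $T+u_1(X_1+\cdots+X_d)$ recovers $E(T,X_1,\dots,X_d)$, and a final substitution $T\mapsto T+u_1(X_1+\cdots+X_d)$ normalizes $\mathscr{E}(1)$ to the stated form.

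The cleanest way to organize the argument is therefore: (1) invoke Proposition \ref{polyan} to kill $m_1,m_2$ and factor out $T$; (2) use the degree bound to eliminate $k\ge 3$ and $k=1$; (3) specialize $X_1=\cdots=X_{d-1}=0$ and quote the one-variable-group classification from \cite{Guy} and \cite{Ell} to pin down $E'^{1}$ up to the two cases; (4) lift the factorization back to $S_1,S_2,E$ and normalize $\mathscr{E}(1)$. I expect step (3) to be the main obstacle: the bookkeeping of exactly which factor of $S_1$ (in case (i)) or of the $k=2$ product (in case (ii)) absorbs the $\sum_{d-1}$ term, and the sign conventions on $\beta$, have to be matched precisely against the symmetry constraints (symmetric in $X_1,\dots,X_{d-1}$ and in $X_d,X_{d+1}$ separately), and one must check the edge case $d=2$ where $\sum_{d-1}=\sum_1$ is linear, so that $\deg S_{j_0}>1$ still leaves room for both subcases. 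The irreducibility of $\sum_{n-d}(X_1,\dots,X_{d+1})$ for $d\ge 2$ is again the key tool that prevents spurious factorizations, exactly as in the proof of Proposition \ref{polyan}.
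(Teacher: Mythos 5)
Your overall route is the same as the paper's: use Proposition \ref{polyan} to conclude $m_1=m_2=0$, factor $T$ out of both $E^{j_0}-f\sum_{n-d+1}$ and $S_{j_0}^{j_0}$, pass to the reduced identity for $E'^{j_0}$, and then quote the classification (Lemma 4.2.3 of \cite{Guy}) to pin down the two possible shapes of $\widetilde{S}_{j_0}^{j_0}$ (degree $1$ or degree $d-1$), from which the displayed $S_1,S_2,E,\mathscr{E}(1)$ in cases (i) and (ii) follow by undoing the substitution. That is exactly what the paper does, and your bookkeeping of the ranks ($r_1=2,\ r_2=d-1$ versus $r_1=d,\ r_2=1$) is consistent with the statement.

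However, your step (2) --- eliminating $k\ge 3$ by a degree count --- does not work. After dividing by $T$, the right-hand side has degree $(r_{j_0}-1)+\sum_{i\neq j_0}r_i=(r_{j_0}-1)+(d+1-r_{j_0})=d$, which matches $\deg E'^{j_0}=d$ for \emph{every} $k$ and every admissible partition of the ranks; there is no contradiction to be extracted from degrees alone, and the parenthetical inequality you sketch actually confirms the balance rather than refuting it. The exclusion of $k\ge 3$ has to come from the structural classification itself (Guyot's Lemma 4.2.3, whose conclusion already includes $k=2$, obtained by specializing $X_1=\cdots=X_{d-1}=0$ and analyzing the resulting two-variable identity as in \cite{Ell}). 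Since your step (3) invokes that same lemma, the flawed degree argument is superfluous rather than fatal: delete it and let the cited lemma carry the conclusion $k=2$, as the paper does.
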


\begin{proof}

Since deg$S_{j_0}>1$, by Proposition \ref{polyan},
\[E^{j_0}(T;X_1,\cdots,X_d)=T\overset{\sim}{E}^{j_0}(T,X_1,\cdots,X_d)\]

 and	
 	\[S_{j_0}^{j_0}(T,X_1\cdots,X_{d-1};X_d,X_{d+1})=T\overset{\sim}{S}_{j_0}^{j_0}(T,X_1,\cdots,X_{d-1};X_d,X_{d+1}).\]
 	
 Clearly, $\overset{\sim}{E}^{j_0}(T,X_1,\cdots,X_d)$ is symmetric about $X_1,\cdots,X_d$, and $\overset{\sim}{S}_{j_0}^{j_0}(T,X_1,\cdots,X_{d-1};X_d,X_{d+1})$ is symmetric about $X_1,\cdots,X_{d-1}$ and $X_d,X_{d+1}$ respectively, so we have the following equation.
 \begin{multline*}
  \overset{\sim}{E}^{j_0}(T,X_1,\cdots,X_d)=\overset{\sim}{S}_{j_0}^{j_0}(T,X_1,\cdots,X_{d-1};X_d,X_{d+1})\overset{k}{\underset{i\neq j_0}{\prod}}S_i^{j_0}(T+\\(u_{i}-u_{j_0}) X_d;X_1,X_2,\cdots X_{d-1};X_d,X_{d+1})+a\sum_{n-d}(X_1,\cdots,X_{d+1}) .
 \end{multline*}
In \cite{Guy}, the author proved the following results in Lemma 4.2.3.

 $k=2$, and deg $\overset{\sim}{S}_{j_0}^{j_0}=1$ or $d-1$. We set $j_0=1$.

 When deg $\overset{\sim}{S}_{j_0}^{j_0}=1$,
\[\overset{\sim}{S}_1^1(T,X_1,\cdots,X_{d+1})=T+\beta(X_1+\cdots+X_{d+1}) \]
 and	
\[S_2^{1}(T^{*}+(u_2-u_1)X_d,X_1,\cdots,X_{d+1})=\sum_{n-d-1}(T^{*},-\beta X_1,\cdots,-\beta X_{d+1}),\]
 where 	$T^*=T+\beta(X_1+\cdots+X_d) $ and $\beta=u_2-u_1$.

 When deg $\overset{\sim}{S}_{j_0}^{j_0}=d-1$,
\[\overset{\sim}{S}_1^1(T,X_1,\cdots,X_{d+1})=\sum_{d-1}(T,(u_2-u_1)X_1,\cdots,(u_2-u_1) X_{d+1})\]
 and
\[S_2^1(T+(u_2-u_1)X_d,X_1,\cdots,X_{d+1})=T+(u_1-u_2)X_{d+1}.\]

Then,  we deduce our conclusion from above results.
 \end{proof}

\subsection{Uniform $(d+1)$-bundle over $G(d,n)$ when $d=n-d$}


\begin{proposition}
	Taking a dualization or tensoring with line bundle if necessary, the uniform vector bundles corresponding to Proposition \ref{B} are as follows:
	\begin{center}
	$H_d^{*}(u_2)\oplus \mathcal{O}_{G(d,n)}(u_2), \mathcal{O}_{G(d,n)}(u_1)\oplus\overset{d}{\underset{i=1}{\oplus}}\mathcal{O}_{G(d,n)}(u_2), \mathcal{O}_{G(d,n)}(u_1)\oplus Q_{n-d}(u_2-1)$ or $S^{2}Q_{2}(u_2)$.
	\end{center}
\end{proposition}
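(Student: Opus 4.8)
The plan is to treat the three cases (i), (ii), (iii) of Proposition~\ref{B} separately. In each case the prescribed shape of $c_{p^{*}E}(T)$ determines the relative HN-filtration $0\subset HN^{1}\subset\cdots\subset HN^{k}=p^{*}E$ over $F(d-1,d,d+1,n)$; I first identify $HN^{1}$ as an explicit line bundle by comparing its first Chern class with the free generators $p^{*}\mathcal{O}_{G(d,n)}(1)$, $\mathscr{H}_{H_{d}^{*}}$, $\mathscr{H}_{Q_{n-d}}$ of $\mathrm{Pic}\,F(d-1,d,d+1,n)$ (Lemma~\ref{Guy}), then descend it to a subbundle of $E$ (by Lemma~\ref{1-1lemma}, or by pushing forward along $p$ when $HN^{1}$ is a pullback from $G(d,n)$), obtaining an exact sequence $0\to A\to E\to B\to 0$ in which $A$ is a twist of $\mathcal{O}_{G(d,n)}$ or of $H_{d}^{*}$ and $B$ is a uniform bundle of smaller rank, classified by Guyot (Proposition~5.4 in~\cite{Guy}) or Theorem~1.1 in~\cite{D-F-G}. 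A cohomology computation then shows the extension splits, except in one subcase.

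For case (i) ($k=2$, $j_{0}=1$, $\deg S_{1}=1$), $HN^{1}$ is a line bundle and the identity $\mathscr{E}(1)$ gives $c_{HN^{1}}(T)=T+u_{1}X_{d}+u_{2}(X_{1}+\cdots+X_{d-1})$; since $u_{1}-u_{2}=1$, twisting $E$ by $\mathcal{O}_{G(d,n)}(-u_{2})$ turns $HN^{1}$ into $\mathscr{H}_{H_{d}^{*}}$, so by Lemma~\ref{1-1lemma} the twisted $E$ contains $H_{d}^{*}$ as a subbundle, with quotient a line bundle which is $\mathcal{O}$ on every line of $G(d,n)$, hence $\mathcal{O}_{G(d,n)}$. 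As $Ext^{1}(\mathcal{O}_{G(d,n)},H_{d}^{*})=H^{1}(G(d,n),H_{d}^{*})=0$, this splits and $E\cong H_{d}^{*}(u_{2})\oplus\mathcal{O}_{G(d,n)}(u_{2})$. Case (ii) is the same, except $HN^{1}$ is the pullback $p^{*}\mathcal{O}_{G(d,n)}(u_{1})$, which becomes trivial after twisting $E$ by $\mathcal{O}_{G(d,n)}(-u_{1})$; applying $p_{*}$ to the first HN step (using $R^{1}p_{*}\mathcal{O}_{F}=0$ on the fibres $\mathbb{P}^{d-1}\times\mathbb{P}^{n-d-1}$) gives $0\to\mathcal{O}_{G(d,n)}\to E(-u_{1})\to\mathcal{Q}\to 0$ with $\mathcal{Q}$ uniform of rank $d$ and splitting type $(1;-1;d)$, whence $\mathcal{Q}\cong\mathcal{O}_{G(d,n)}(-1)^{\oplus d}$. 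Since $Ext^{1}(\mathcal{O}_{G(d,n)}(-1)^{\oplus d},\mathcal{O}_{G(d,n)})=H^{1}(G(d,n),\mathcal{O}_{G(d,n)}(1))^{\oplus d}=0$, we get $E\cong\mathcal{O}_{G(d,n)}(u_{1})\oplus\mathcal{O}_{G(d,n)}(u_{2})^{\oplus d}$.

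For case (iii) ($k=3$), $HN^{1}$ is again a line bundle with $c_{1}(HN^{1})=-u_{j_{0}}(X_{1}+\cdots+X_{d})$, so $HN^{1}\otimes p^{*}\mathcal{O}_{G(d,n)}(-u_{j_{0}})$ has trivial first Chern class and hence is trivial; thus $HN^{1}\cong p^{*}\mathcal{O}_{G(d,n)}(u_{j_{0}})$, with $u_{j_{0}}$ the largest slope. Twisting so that $u_{j_{0}}=0$ and applying $p_{*}$ yields $0\to\mathcal{O}_{G(d,n)}\to E\to\mathcal{Q}\to 0$ with $\mathcal{Q}$ uniform of rank $d$ and splitting type $(2;-1,-2;1,d-1)$; matching the $\sum_{d-1}$ and $\sum_{n-d}$ contributions of the prescribed Chern polynomial against those of $Q_{n-d}$ (and using Theorem~1.1 in~\cite{D-F-G}) gives $\mathcal{Q}\cong Q_{n-d}(-2)$. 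For $d\ge3$, $Ext^{1}(Q_{n-d}(-2),\mathcal{O}_{G(d,n)})=H^{1}(G(d,n),Q_{n-d}^{*}(2))=0$, so $E\cong\mathcal{O}_{G(d,n)}(u_{1})\oplus Q_{n-d}(u_{2}-1)$. For $d=2$ this $Ext^{1}$ may be nonzero, and one argues as in Lemma~\ref{S^2 case}: restricting the extension class to the $\mathbb{P}^{1}$-fibres of $q$, either it splits, giving $E\cong\mathcal{O}_{G(d,n)}(u_{1})\oplus Q_{2}(u_{2}-1)$, or it is nontrivial, in which case the fibrewise analysis together with the impossibility of splitting off a line subbundle that is a $p$-pullback forces $E\cong S^{2}Q_{2}(u_{2})$.

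The main obstacle is precisely this $d=2$ subcase of (iii): as in the proof of Lemma~\ref{S^2 case}, splitting cannot be deduced from the vanishing of a global $Ext^{1}$, and one must analyze the restriction of the extension class to the pencil-fibres in $G(2,4)$, distinguishing $\mathcal{O}_{\mathbb{P}^{1}}(-1)^{\oplus2}$ from $\mathcal{O}_{\mathbb{P}^{1}}\oplus\mathcal{O}_{\mathbb{P}^{1}}(-2)$ via the induced diagram of connecting homomorphisms and ruling out the inconsistent configuration. All other cases are routine once the HN-filtration is pinned down, reducing to Chern-class bookkeeping, Guyot's rank-$d$ classification, and standard cohomology vanishing on the Grassmannian.
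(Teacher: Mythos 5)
Your cases (i) and (ii) reproduce the paper's argument (identify $HN^1$ as $\mathscr{H}_{H_d^*}$ resp.\ $p^*\mathcal{O}_{G(d,n)}(u_1)$, descend via Lemma \ref{1-1lemma} resp.\ $p_*$, classify the rank-$d$ quotient, split by $Ext^1$-vanishing) and are fine. The gap is in case (iii). You assume that the distinguished index $j_0$ of Proposition \ref{B}(iii) is the index of the largest slope, so that $HN^1\cong p^*\mathcal{O}_{G(d,n)}(u_{j_0})$ descends to a trivial sub-line-bundle of $E$ and the problem reduces to a rank-$d$ uniform quotient. But $j_0$ is pinned down by the vanishing of the top $X_{d+1}$-coefficient of $S_{j_0}$, not by the ordering of the $u_i$; after dualization one must treat two genuinely different subcases, $j_0=1$ and $j_0=2$. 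In the subcase $j_0=2$ one finds $c_{HN^1}(T)=T-2X_{d+1}$, i.e.\ $HN^1\cong\mathscr{H}_{Q_{n-d}}^{\otimes 2}$, which is not a pullback from $G(d,n)$ and satisfies $p_*HN^1=0$: there is no exact sequence $0\to\mathcal{O}_{G(d,n)}\to E\to\mathcal{Q}\to0$, and your reduction collapses. This is precisely the subcase that carries all the difficulty: for $d=n-d>2$ it must be shown \emph{not to occur} (the paper proves $Ext^1(p^*\mathcal{O}_{G(d,n)}(1),\mathscr{H}_{Q_{n-d}}^{\otimes2})=0$, splits $HN^2$, and derives a contradiction from the Descente-Lemma), and for $d=2$ it is the only source of $S^2Q_2$.

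Moreover, the mechanism by which you propose to obtain $S^2Q_2$ — as a nonsplit extension $0\to\mathcal{O}_{G(2,4)}(u_1)\to E\to Q_2(u_2-1)\to0$ — cannot work: comparing the two components of $c_2$ in the degree-two part of the Chow ring of $G(2,4)$ shows that no twist of $S^2Q_2$ admits a filtration with a line bundle as sub and a twist of $Q_2$ as quotient (already $c_3(S^2Q_2)=4c_1(Q_2)c_2(Q_2)\neq0$ rules out a trivial sub-line-bundle). In the paper, $S^2Q_2(2)$ instead appears as $p_*$ of the top HN-quotient $\mathscr{H}_{Q_2}^{*\otimes2}\otimes p^*\mathcal{O}_{G(2,4)}(2)$, and one concludes $E\cong S^2Q_2(2)$ from the four-term sequence $0\to p_*HN^2\to E\to S^2Q_2(2)\to R^1p_*HN^2\to0$ by proving $p_*HN^2=R^1p_*HN^2=0$; this in turn requires showing that the extension defining $HN^2$ is nontrivial and that $pr_{2*}HN^2$ restricts to $\mathcal{O}_{\mathbb{P}^1}(-1)^{\oplus2}$ on every $r$-fibre (the alternative $\mathcal{O}_{\mathbb{P}^1}\oplus\mathcal{O}_{\mathbb{P}^1}(-2)$ being excluded by the Descente-Lemma argument). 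None of this is captured by your ``either the extension class vanishes or it doesn't'' dichotomy, so case (iii) needs to be redone along these lines; your identification of the quotient $\mathcal{Q}$ with $Q_{n-d}(-2)$ in the $j_0=1$ subcase is also asserted rather than derived from the rank-$d$ classification.
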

\begin{proof}
Consider the standard diagram
\begin{center}
	$\xymatrix
	{
		F(d-1,d,n)\ar[dr]^{q} & F(d-1,d,d+1,n)\ar[r]^{pr_2} \ar[d]^{p}\ar[l]_{pr_1}& F(d,d+1,n)\ar[dl]^{r}\\
		& G(d,n). & \\
	}$
\end{center}
For case (i) in Proposition \ref{B}, after replacing $T$ with $T+u_2(X_1+\cdots X_d)$, we have
\begin{center} $c_{p^{*}E(-u_2)}(T)=(T+X_d)(\overset{d-1}{T\underset{i=1}{\prod}}(T+X_i)+a\sum_{n-d}(X_1,\cdots,X_{d+1}))$.
\end{center}
The HN-filtration gives the exact sequence
\begin{center}
	$ 0\longrightarrow HN^1\longrightarrow p^{*}E(-u_2)\longrightarrow F\longrightarrow 0. $
\end{center}
For line bundles $HN^1$ and $\mathscr{H}_{H_d^*}$, since $c_{HN^1}(T)=T+X_d=c_{\mathscr{H}_{H_d^*}}(T)$, we get that $HN^1\cong \mathscr{H}_{H_d^*}$. Thus $p^*E(-u_2)$ has $\mathscr{H}_{H_d^*}$ as a subbundle. By Lemma \ref{1-1lemma}, $E(-u_2)$ has $H_d^*$ as a subbundle.  Because of the splitting type, we have exact sequence
\begin{center}
	$0\longrightarrow H_d^*\longrightarrow E(-u_2)\longrightarrow \mathcal{O}_{G(d,n)}\longrightarrow 0$.
\end{center}
Therefore, $E(-u_2)\cong H_d^*\oplus \mathcal{O}_{G(d,n)}$.

For case (ii) in Proposition \ref{B}, after replacing $T$ with $T+u_1(X_1+\cdots X_d)$, we have

\begin{center}		$c_{p^{*}E(-u_1)}(T)=T((T+(u_2-u_1)(X_1+\cdots+X_{d-1}))^d+a\sum_{n-d}(X_1,\cdots,X_{d+1}))$.
	\end{center}
The HN-filtration gives the exact sequence
\begin{center}
	$ 0\longrightarrow HN^1\longrightarrow p^{*}E(-u_1)\longrightarrow F\longrightarrow 0.$
\end{center}
For line bundles $HN^1$ and $p^{*}{\mathcal{O}_{G(d,n)}}$, since $c_{HN^1}(T)=T=c_{p^{*}{\mathcal{O}_{G(d,n)}}}(T)$, we have $HN^1\cong p^{*}{\mathcal{O}_{G(d,n)}}$. Thus $p^*E$ has $p^{*}\mathcal{O}_{G(d,n)}$ as a subbundle. Applying $p_*$ to the above sequence, we have
\begin{center}
	$0\longrightarrow \mathcal{O}_{G(d,n)}\longrightarrow E(-u_1)\longrightarrow p_* F\longrightarrow 0$.
\end{center}
Comparing the splitting type, we know that $p_* F$ has the splitting type $(-1,\cdots, -1)$. By Proposition 3.1 in \cite{D-F-G}, $p_* F\cong \overset{d}{\underset{i=1}{\oplus}}\mathcal{O}_{G(d,n)}(-1)$. Therefore $E\cong \mathcal{O}_{G(d,n)}(u_1)\oplus\overset{d}{\underset{i=1}{\oplus}}\mathcal{O}_G(u_2)$.

For case (iii) in Proposition \ref{B}, taking a dualization if necessary, we may assume that $j_0=1$ or $j_0=2$.

When $j_0=1$, after replacing $T$ with $T+u_1(X_1+\cdots X_d)$, the HN-filtration gives the exact sequence
\begin{center}
	$0\longrightarrow HN^1\longrightarrow p^{*}E(-u_1)\longrightarrow F\longrightarrow 0$.
\end{center}

For line bundles $HN^1$ and $p^{*}{\mathcal{O}_{G(d,n)}}$, since $c_{HN^1}(T)=T=c_{p^{*}{\mathcal{O}_{G(d,n)}}}(T)$ , we have $HN^1\cong p^{*}{\mathcal{O}_{G(d,n)}}$ and the exact sequence
 \begin{center}
 	$0\longrightarrow p^{*}\mathcal{O}_{G(d,n)}\longrightarrow p^{*}E(-u_1)\longrightarrow F\longrightarrow 0$.
 \end{center}
Applying $p_*$ to the above sequence, we get
\begin{center}
	$0\longrightarrow \mathcal{O}_{G(d,n)}\longrightarrow E(-u_1)\longrightarrow p_*F\longrightarrow 0$.
\end{center}
Because of the splitting type of $E$, we know that $p_*F$ is a uniform vector bundle. By Theorem 1 in \cite{Guy}, we have $p_*F\cong H_{d}(-1)$ or $p_*F\cong H_{d}^*(-2)$. Therefore $E\cong \mathcal{O}_{G(d,n)}(u_1)\oplus H_{d}(u_1-1)$ or $E\cong \mathcal{O}_{G(d,n)}(u_1)\oplus H_{d}^*(u_1-2)$.

When $j_0=2$,  taking a dualization if necessary, we may assume that $r_1=1$. After replacing $T$ with $T+u_3(X_1+\cdots+X_d)$, the HN-filtration gives the exact sequences
\begin{equation}\label{d=n-d1}
0\longrightarrow HN^1\longrightarrow HN^2\longrightarrow I\longrightarrow 0
\end{equation}
and	
\begin{equation}\label{d=n-d2}
0\longrightarrow HN^2\longrightarrow p^{*}E\longrightarrow F\longrightarrow 0.
\end{equation}

We know  that
\[c_{HN^1}(T)=T-2X_{d+1}=c_{ \mathscr{H}_{Q_{n-d}}^{\otimes 2}}(T)\] and \[c_{HN^{2}/HN^{1}}(T)=T+X_{1}+X_{2}+\cdots+X_{d}=c_{p^*\mathcal{O}_{G(d,n)}(1)}(T).\]  Since $HN^1$, $ \mathscr{H}_{Q_{n-d}}^{\otimes 2}$, $HN^{2}/HN^{1}$ and $p^*\mathcal{O}_{G(d,n)}(1)$ are line bundles, we get $HN^1\cong  \mathscr{H}_{Q_{n-d}}^{\otimes 2}$ and $HN^{2}/HN^{1}\cong p^*\mathcal{O}_{G(d,n)}(1)$. Then the exact sequences (\ref{d=n-d1}) becomes
\[0\longrightarrow  \mathscr{H}_{Q_{n-d}}^{\otimes 2} \longrightarrow HN^2 \longrightarrow p^{*}\mathcal{O}_{G(d,n)}(1) \longrightarrow 0.\]

When $d=n-d>2$ and $i>0$, we have
\[ R^{i}pr_{2*} (\mathscr{H}_{Q_{n-d}}^{\otimes 2}\otimes p^{*}\mathcal{O}_{G(d,n)}(-1))=0\]
and	
\[R^{i}r_{*}(\mathcal{O}_{F(d,d+1,n)}(-2))=0\] since $F(d,d+1,n)$ can be viewed as a projective bundle over $G(d,n)$.

Combining with projection formula, we have
\begin{eqnarray*}
&&H^{1}(F(d-1,d,d+1,n),\mathscr{H}_{Q_{n-d}}^{\otimes 2}\otimes p^{*}\mathcal{O}_{G(d,n)}(-1))\\
&=&H^1(F(d,d+1,n),pr_{2*}(\mathscr{H}_{Q_{n-d}}^{\otimes 2}\otimes p^{*}\mathcal{O}_{G(d,n)}(-1)))\\
&=&H^1(F(d,d+1,n),\mathcal{O}_{F(d,d+1,n)}(-2)\otimes r^{*}\mathcal{O}_{G(d,n)}(-1))\\
&=&H^1(G(d,n),r_{*}\mathcal{O}_{F(d,d+1,n)}(-2)\otimes \mathcal{O}_{G(d,n)}(-1))\\
&=&H^1(G(d,n),0)\\
&=&0.
\end{eqnarray*}

Since \[H^{1}(F(d-1,d,d+1,n),\mathscr{H}_{Q_{n-d}}^{\otimes 2}\otimes p^{*}\mathcal{O}_{G(d,n)}(-1))=Ext^{1}(p^{*}\mathcal{O}_{G(d,n)}(1),\mathscr{H}_{Q_{n-d}}^{\otimes 2})=0,\] we have
\begin{center}
	$ HN^2\simeq \mathscr{H}_{Q_{n-d}}^{\otimes 2}\oplus p^{*}\mathcal{O}_{G(d,n)}(1) .$
\end{center}

Applying $p_* $ to (\ref{d=n-d2}) we have the exact sequence

\begin{center}
	$ 0\longrightarrow \mathcal{O}_{G(d,n)}(1)  \longrightarrow E \longrightarrow p_{*}F\longrightarrow 0 $.
\end{center}

Moreover, over $ F(d-1,d,d+1,n)$, we have the commutative diagram

\begin{center}
	$\xymatrix{
		0\ar[r] & p^{*}\mathcal{O}_{G(d,n)}(1) \ar[r] \ar[d] & p^{*}E \ar[d]^{id} \ar[r] & p^{*}p_{*}F\ar[r]\ar[d]&0\\
		0\ar[r] & HN^2 \ar[r]&p^{*}E \ar[r]&F\ar[r]&0.\\
	}$	
\end{center}

The snake lemma gives the following exact sequence:

\begin{center}
	$ 0\longrightarrow  \mathscr{H}_{Q_{n-d}}^{\otimes 2} \longrightarrow p^{*}p_{*}F \longrightarrow F \longrightarrow 0 $.
\end{center}

Restricting to the fiber of $s$ at some point $l$ in $F(d-1,d+1,n)$, we have
\[Hom(T_{F(d-1,d,d+1,n)/G(d,n)},\mathscr{H}om(\mathscr{H}_{Q_{n-d}}^{\otimes 2},F))|_{s^{-1}(l)}=0\]  \[\Rightarrow Hom(T_{F(d-1,d,d+1,n)/G(d,n)},\mathscr{H}om(\mathscr{H}_{H_{d}^{*}}^{\otimes 2},F))=0.\]

Thus, by Descente-Lemma (Lemma 2.1.2 in \cite{O-S-S}), there is a line subbundle $L$ of $p_{*}F$ such that $p^{*}L $ is isomorphic to $\mathscr{H}_{Q_{n-d}}^{\otimes 2}$, which is impossible because $p_*p^{*}L\cong L$ while $p_*\mathscr{H}_{Q_{n-d}}^{\otimes 2}=0$. Thus, $n-d=2$.

If $n-d=2$, then $d=2$ and $n=4$. The HN-filtration gives two exact sequences

\begin{equation}\label{d=n-d3}
	0\longrightarrow  \mathscr{H}_{Q_{2}}^{\otimes 2} \longrightarrow HN^2 \longrightarrow p^{*}\mathcal{O}_{G(2,4)}(1) \longrightarrow 0
\end{equation}
and
\begin{equation}\label{d=n-d4}	
0\longrightarrow  HN^2 \longrightarrow p^*E \longrightarrow \mathscr{H}_{Q_{2}}^{*\otimes 2}\otimes p^{*}\mathcal{O}_{G(2,4)}(2) \longrightarrow 0.
\end{equation}

Next, we want to consider the extension of the exact sequence (\ref{d=n-d3}) by calculating \[H^{1}(F(1,2,3,4), \mathscr{H}_{Q_{2}}^{\otimes 2}\otimes p^{*}\mathcal{O}_{G(2,4)}(-1)).\]

By the Leray spectral sequence, we have the exact sequence
\begin{multline*}
H^{1}(G(2,4),p_{*}(\mathscr{H}_{Q_{2}}^{\otimes 2}\otimes p^{*}\mathcal{O}_{G(2,4)}(-1)))\rightarrow H^{1}(F(1,2,3,4), \mathscr{H}_{Q_{2}}^{\otimes 2}\otimes p^{*}\mathcal{O}_{G(2,4)}(-1))\rightarrow \\ H^{0}(G(2,4),R^{1}p_{*}(\mathscr{H}_{Q_{2}}^{\otimes 2}\otimes p^{*}\mathcal{O}_{G(2,4)}(-1)))\rightarrow H^{2}(G(2,4),p_{*}(\mathscr{H}_{Q_{2}}^{\otimes 2}\otimes p^{*}\mathcal{O}_{G(2,4)}(-1))).
\end{multline*}

For $p_{*}\mathscr{H}_{Q_{2}}^{\otimes2}=0$, we have
\begin{eqnarray*}
&&H^{1}(F(1,2,3,4), \mathscr{H}_{Q_{2}}^{\otimes 2}\otimes p^{*}\mathcal{O}_{G(2,4)}(-1))\\
&=&H^{0}(G(2,4),R^{1}p_{*}(\mathscr{H}_{Q_{2}}^{\otimes 2}\otimes p^{*}\mathcal{O}_{G(2,4)}(-1)))\\
&=&H^{0}(G(2,4),\mathcal{O}_{G(2,4)})\\
&=&k.
\end{eqnarray*}

So $HN^{2}$ is the direct sum of line bundles or the nontrivial extension. From the argument above, $HN^{2}$ is not the trivial extension.

Applying $p_{*}$ to (\ref{d=n-d4}), we have

\begin{equation}\label{S2Q22}
0\longrightarrow p_{*}HN^{2}\longrightarrow E\longrightarrow S^{2}Q_{2}(2)\longrightarrow R^{1}p_{*}HN^{2}\longrightarrow 0.
\end{equation}

Applying $pr_{2*}$ to (\ref{d=n-d3}) and setting $M=pr_{2*}HN^{2}$, we have
\begin{equation}\label{d=n-dM}
	0\longrightarrow \mathcal{O}_{F(2,3,4)}(-2)\longrightarrow M \longrightarrow r^{*}\mathcal{O}_{G(2,4)}(1)\longrightarrow 0.
\end{equation}
By the similar argument in Lemma \ref{S^2 case}, $M$ is a nontrivial extension.

We are going to prove $r_*M=p_*HN^2=0$. Now, $r^{-1}(x)$ is a line for a point $x\in G(2,4)$. The exact sequence
\begin{center}
	$0\longrightarrow \mathcal{I}_{r^{-1}(x)} \longrightarrow \mathcal{O}_{F(2,3,4)} \stackrel{h_{x}}{\longrightarrow} \mathcal{O}_{r^{-1}(x)}\longrightarrow0$
\end{center}
holds.  Tensoring with $\mathcal{O}_{F(2,3,4)}(-2)\otimes r^{*}\mathcal{O}_{G(2,4)}(-1)$, we get the exact sequence
\begin{center}
	$0\longrightarrow \mathcal{I}_{r^{-1}(x)}\otimes \mathcal{O}_{F(2,3,4)}(-2)\otimes r^{*}\mathcal{O}_{G(2,4)}(-1) \longrightarrow \mathcal{O}_{F(2,3,4)}(-2)\otimes r^{*}\mathcal{O}_{G(2,4)}(-1) \stackrel{h_{x}}{\longrightarrow} \mathcal{O}_{r^{-1}(x)}(-2)\longrightarrow0$,
\end{center}
which induces the long exact sequence
\begin{multline*}
0=H^0(r^{-1}(x),\mathcal{O}_{r^{-1}(x)}(-2))\longrightarrow H^1(F(2,3,4),\mathcal{I}_{r^{-1}(x)}\otimes \mathcal{O}_{F(2,3,4)}(-2)) \longrightarrow \\ H^1(F(2,3,4),\mathcal{O}_{F(2,3,4)}(-2)\otimes r^{*}\mathcal{O}_{G(2,4)}(-1))\stackrel{f_x}{\longrightarrow} H^1(r^{-1}(x),\mathcal{O}_{r^{-1}(x)}(-2)).
\end{multline*}

Over $ F(2,3,4) $, we have the commutative diagram

\begin{center}
	$\xymatrix{
		0\ar[r] & \mathcal{O}_{F(2,3,4)}(-2)\otimes r^{*}\mathcal{O}_{G(2,4)}(-1) \ar[r] \ar[d] &M\otimes r^{*}\mathcal{O}_{G(2,4)}(-1) \ar[d]^{id} \ar[r] & \mathcal{O}_{F(2,3,4)}\ar[r]\ar[d]&0\\
		0\ar[r] & \mathcal{O}_{r^{-1}(x)}(-2) \ar[r]&M|_{r^{-1}(x)}\ar[r]&\mathcal{O}_{r^{-1}(x)}\ar[r]&0,\\
	}$	
\end{center}
which induces the commutative diagram
\begin{center}
	$\xymatrix{
		H^0(F(2,3,4),\mathcal{O}_{F(2,3,4)})\ar[r]^(0.4){\delta_M}\ar[d]^{h'}&H^1(F(2,3,4),\mathcal{O}_{F(2,3,4)}(-2)\otimes q^{*}\mathcal{O}_{G(2,4)}(-1))\ar[d]^{f_x}\\
		H^0(r^{-1}(x),\mathcal{O}_{r^{-1}(x)})\ar[r]^(0.45){\delta_{M|{r^{-1}(x)}}}&H^1(r^{-1}(x),\mathcal{O}_{r^{-1}(x)}(-2)).
	}$
\end{center}
Then
\[f_{x}\circ {\delta_M(1)}=\delta_{M|{r^{-1}(x)}}\circ h'(1)\]
and	
\[h'(1)=1, \delta_M(1)=t\neq 0\]
since (\ref{d=n-dM}) is the nontrivial extension.

By the similar arguments in Lemma \ref{S^2 case}, we can get that
%
%
%
$\delta_{M|{r^{-1}(x)}}\neq 0$. Thus $M|_{r^{-1}(x)}\cong \mathcal{O}_{\mathbb{P}^1}(-1)\oplus \mathcal{O}_{\mathbb{P}^1}(-1)$ for all $r$-fibers.

Since $p=r\circ pr_{2*}$ and $pr_{2*}HN^{2}$ is of splitting type $(-1,-1)$ at each fiber of $r$, we have $p_{*}HN^{2}=R^{1}p_{*}HN^{2}=0$. Therefore  $E\cong S^{2}Q_{2}(2)$ from (\ref{S2Q22}).
\end{proof}

\begin{proposition}
	The uniform vector bundle $E$ corresponding Proposition \ref{C} is isomorphic to
		$Q_{n-d}(u_2)\oplus \mathcal{O}_{G(d,n)}(u_1)$ or $Q_{n-d}^{*}(u_1)\oplus \mathcal{O}_{G(d,n)}(u_1)$.	
\end{proposition}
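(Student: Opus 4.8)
The plan is to run, for each of the two normal forms listed in Proposition~\ref{C}, the same machine used in the previous propositions and in Lemma~\ref{S^2 case}: pass to the relative HN-filtration of $p^{*}E$ on $F(d-1,d,d+1,n)$, read off the Chern polynomials of its graded pieces by Lemma~\ref{Guy}, descend a line subbundle to $G(d,n)$ by means of Lemma~\ref{1-1lemma} (and the Descente-Lemma, Lemma~2.1.2 of \cite{O-S-S}), identify the complementary factor by comparing splitting types (and Chern classes) with Guyot's classification of uniform bundles of rank $\le d$ and with \cite{D-F-G}, and split the resulting extension by a cohomology vanishing. After tensoring $E$ with a line bundle and, if needed in case~(ii), dualizing, I would normalise the twists; the target is $E\cong Q_{n-d}(u_{2})\oplus\mathcal{O}_{G(d,n)}(u_{1})$ in case~(i) and $E\cong Q_{n-d}^{*}(u_{1})\oplus\mathcal{O}_{G(d,n)}(u_{1})$ in case~(ii), and one checks first that these have the required splitting types, recalling $Q_{n-d}|_{L}=\mathcal{O}_{L}(1)\oplus\mathcal{O}_{L}^{\oplus(n-d-1)}$ and $H_{d}|_{L}=\mathcal{O}_{L}(-1)\oplus\mathcal{O}_{L}^{\oplus(d-1)}$ for a line $L\subset G(d,n)$.

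For case~(i) I would replace $E$ by $E(-u_{2})$; then, using $u_{1}-u_{2}=1$ and $\beta=u_{2}-u_{1}=-1$, Lemma~\ref{Guy} gives $c_{HN^{1}}(T)=(T+X_{1}+\cdots+X_{d})(T-X_{d+1})=c_{p^{*}\mathcal{O}_{G(d,n)}(1)}(T)\cdot c_{\mathscr{H}_{Q_{n-d}}}(T)$ for the top graded piece of the HN-filtration of $p^{*}E(-u_{2})$. Restricting $HN^{1}$ to a fibre of $pr_{2}$ (on which $X_{d+1}=0$ and $X_{1}+\cdots+X_{d}=0$), it becomes a rank-$2$ subbundle of a trivial bundle with trivial first Chern class, hence itself trivial; therefore $HN^{1}=pr_{2}^{*}M$ for a rank-$2$ bundle $M$ on $F(d,d+1,n)=\mathbb{P}(Q_{n-d}^{*})$ sitting inside $r^{*}E(-u_{2})$, with $c_{M}(T)=c_{r^{*}\mathcal{O}_{G(d,n)}(1)}(T)\cdot c_{\mathcal{O}_{F(d,d+1,n)}(-1)}(T)$. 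Restricting $M$ to a fibre of $r$ (which is $\mathbb{P}^{n-d-1}$) pins down $M|_{r^{-1}(x)}\cong\mathcal{O}_{\mathbb{P}^{n-d-1}}\oplus\mathcal{O}_{\mathbb{P}^{n-d-1}}(-1)$, whence $M\cong r^{*}\mathcal{O}_{G(d,n)}(1)\oplus\mathcal{O}_{F(d,d+1,n)}(-1)$ and the tautological inclusion $\mathcal{O}_{F(d,d+1,n)}(-1)\hookrightarrow r^{*}Q_{n-d}$ lifts to $\mathscr{H}_{Q_{n-d}}\hookrightarrow p^{*}E(-u_{2})$. By Lemma~\ref{1-1lemma} this yields $Q_{n-d}\hookrightarrow E(-u_{2})$, with quotient a line bundle which must be $\mathcal{O}_{G(d,n)}(1)$ by its first Chern class and splitting type; the extension splits because $\mathrm{Ext}^{1}(\mathcal{O}_{G(d,n)}(1),Q_{n-d})=H^{1}(G(d,n),Q_{n-d}(-1))=0$, and untwisting gives $E\cong Q_{n-d}(u_{2})\oplus\mathcal{O}_{G(d,n)}(u_{1})$.

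For case~(ii) it is cleanest to dualize and twist $E$ first, which brings it into exactly the shape of case~(i) except that now $HN^{1}$ has rank one; identifying $HN^{1}\cong\mathscr{H}_{Q_{n-d}}$ directly via Lemma~\ref{Guy} and applying Lemma~\ref{1-1lemma} gives $Q_{n-d}$ as a subbundle, hence a split sequence $0\to Q_{n-d}\to E^{*}(\text{twist})\to\mathcal{O}_{G(d,n)}\to 0$ (now $\mathrm{Ext}^{1}(\mathcal{O}_{G(d,n)},Q_{n-d})=H^{1}(G(d,n),Q_{n-d})=0$), and dualizing back gives $E\cong Q_{n-d}^{*}(u_{1})\oplus\mathcal{O}_{G(d,n)}(u_{1})$. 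Alternatively one may apply $p_{*}$ to $0\to HN^{1}\to p^{*}E(-u_{1})\to F\to 0$ to get $0\to\mathcal{O}_{G(d,n)}\to E(-u_{1})\to p_{*}F\to 0$ with $p_{*}F$ uniform of rank $d$ and splitting type $\mathcal{O}_{L}(-1)\oplus\mathcal{O}_{L}^{\oplus(d-1)}$; here one must be careful, since for $d=n-d$ the bundles $H_{d}$ and $Q_{n-d}^{*}$ share this splitting type ($Q_{n-d}^{*}$ being the pullback of $H_{d}$ under the duality automorphism of $G(d,2d)$), so Guyot's list must be supplemented by $Q_{n-d}^{*}$, and one distinguishes the two by comparing the second Chern class read off from the Chern polynomial of $p^{*}E$ in Proposition~\ref{C}(ii), which forces $p_{*}F\cong Q_{n-d}^{*}$; then $\mathrm{Ext}^{1}(Q_{n-d}^{*}(u_{1}),\mathcal{O}_{G(d,n)}(u_{1}))=H^{1}(G(d,n),Q_{n-d})=0$ splits the sequence.

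The step I expect to require the most care — just as the appearance of $S^{2}Q_{2}$ did in Lemma~\ref{S^2 case} — is showing that the graded piece $HN^{1}$ really does split off the $\mathscr{H}_{Q_{n-d}}$-factor, i.e.\ that the extension built into the HN-filtration is trivial; this is where the fibrewise triviality over $pr_{2}$ and $r$, together with the Descente-Lemma and a Leray/projection-formula bookkeeping of the obstruction group as in Lemma~\ref{S^2 case}, are needed, as well as the $d=n-d$ subtlety just mentioned. I expect this to go through more cleanly than in Lemma~\ref{S^2 case}, because the $\sum_{d-1}(\cdot)$ factors appearing in Proposition~\ref{C} have rank exactly $n-d-1$, matching a twist of $Q_{n-d}$ (resp.\ $Q_{n-d}^{*}$) rather than a symmetric power of a rank-$2$ bundle, so no exotic bundle intervenes and the relevant $\mathrm{Ext}^{1}$ groups vanish outright; the cases $f\ne 0$ and $n-d=2$ are to be inspected by hand but cause no new difficulty.
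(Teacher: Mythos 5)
Your overall strategy is the right one and your case~(ii) is essentially the paper's own argument: the paper identifies the rank-one quotient $F=p^{*}E(-u_{1})/HN^{1}$ with $\mathscr{H}_{Q_{n-d}}^{*}$ by its Chern polynomial, dualizes so that $\mathscr{H}_{Q_{n-d}}$ becomes a subbundle of $p^{*}E^{*}(u_{1})$, applies Lemma~\ref{1-1lemma} to get $Q_{n-d}\hookrightarrow E^{*}(u_{1})$ with quotient $\mathcal{O}_{G(d,n)}$, and splits; your ``dualize first'' phrasing is the same computation. Your remark that on $G(d,2d)$ the splitting type $(1,0,\dots,0)$ does not distinguish $H_{d}^{*}$ from $Q_{n-d}$, so that second Chern classes must be compared, is a genuine point that the paper's citations of Guyot gloss over.

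For case~(i) you diverge from the paper. The paper disposes of it in one line by running the pushforward argument of Lemma~\ref{(1,1,000case)} with $pr_{1},q$ replaced by $pr_{2},r$: one takes $M=pr_{2*}HN^{1}$, uses only the \emph{canonical} inclusion $r^{*}r_{*}M\hookrightarrow M$ (no splitting of $M$ needed) to produce the line bundle $\mathcal{O}_{G(d,n)}(1)$ as a \emph{sub}bundle of $E(-u_{2})$, and then identifies the rank-$d$ quotient $p_{*}F$ with $Q_{n-d}$ from its splitting type and Chern polynomial. You instead want the $\mathscr{H}_{Q_{n-d}}$-direction as a subbundle, which forces you through the assertion ``$M|_{r^{-1}(x)}\cong\mathcal{O}\oplus\mathcal{O}(-1)$, whence $M\cong r^{*}\mathcal{O}_{G(d,n)}(1)\oplus\mathcal{O}_{F(d,d+1,n)}(-1)$.'' That ``whence'' is not automatic: fibrewise splitting gives only the filtration $0\to r^{*}(r_{*}M)\to M\to \mathcal{O}_{F(d,d+1,n)}(-1)\otimes r^{*}L'\to 0$, with the $\mathcal{O}(-1)$-direction appearing as the \emph{quotient}, and the whole point of Lemma~\ref{S^2 case} (and of the $S^{2}Q_{2}$ branch of the preceding proposition) is that the exactly analogous extension there is nontrivial. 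Here the obstruction is $H^{1}(F(d,d+1,n),\mathcal{O}_{F(d,d+1,n)}(1)\otimes r^{*}\mathcal{O}_{G(d,n)}(1))\cong H^{1}(G(d,2d),\wedge^{n-d-1}Q_{n-d})=0$, so your route does close, but this vanishing has to be stated and proved rather than inferred from the fibrewise type; as written it is the one real gap in the proposal. The trade-off is that your version, once patched, avoids invoking Guyot's rank-$d$ classification for the quotient, while the paper's version avoids any extension-splitting on $F(d,d+1,n)$.
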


\begin{proof}
For case (i),
by the similar argument in Lemma \ref{(1,1,000case)}, $E\cong Q_{n-d}(u_2)\oplus \mathcal{O}_{G(d,n)}(u_1)$.

For case (ii), after replacing $T$ with $T+u_1(X_1+\cdots+X_d)$, the HN-filtration gives the exact sequence
\begin{center}
	$0\longrightarrow HN^1\longrightarrow p^{*}E(-u_1)\longrightarrow F\longrightarrow0$.
\end{center}
Since $c_{F}(T)=T+X_{d+1}=c_{\mathscr{H}_{Q_{n-d}}^{*}}(T)$, we have $F\cong \mathscr{H}_{Q_{n-d}}^{*}$. Thus $p^{*}E^{*}(u_1)$ has $\mathscr{H}_{Q_{n-d}}$ as a subbundle. By Lemma \ref{1-1lemma}, $E^{*}(u_1)$ has $Q_{n-d}$ as a subbundle. So we have the exact sequence
\begin{center}
	$0\longrightarrow Q_{n-d}\longrightarrow E^{*}(u_1)\longrightarrow N\longrightarrow0$
\end{center}
for some line bundle $N$.
Comparing the splitting type, we know that $N\cong \mathcal{O}_{G(d,n)}$. So $E\cong Q_{n-d}^{*}(u_1)\oplus \mathcal{O}_{G(d,n)}(u_1)$.
\end{proof}

\vspace{.5cm}
\textbf{Data availability} No data was used for the research described in the article.

\vspace{.5cm}
\textbf{Declarations}

\textbf{Conflict of interest}: The author states that there is no conflict of interest.

\bibliographystyle{plain}
\bibliography{ref}

\end{document}